\theoremstyle{plain}
\newtheorem{theorem}{Theorem}
\newtheorem{lemma}{Lemma}
\newtheorem{corollary}{Corollary}
\newtheorem{proposition}{Proposition}
\theoremstyle{definition}
\theoremstyle{remark}
\newtheorem{remark}{Remark}
\numberwithin{equation}{section}
\begin{document}
\title[A-W polynomials]{Befriending Askey--Wilson polynomials}
\author{Pawe\l\ J. Szab\l owski}
\address{Department of Mathematics and Information Sciences,\\
Warsaw University of Technology\\
ul. Koszykowa 75, 00-662 Warsaw, Poland}
\email{pawel.szablowski@gmail.com}
\date{September 1, 2011}
\subjclass[2010]{Primary 33D45, 33D65, 42C10; Secondary 60E05, 42C05, 05A30, 
}
\keywords{Askey--Wilson, Al-Salam--Chihara, continuous dual Hahn, $q$%
-Hermite, , orthogonal polynomials, density expansion, kernels built of
Al-Salam--Chihara polynomials}
\thanks{The author is enormously grateful to the unknown referee who
patiently and scrupulously in two very long and detailed reports pointed out
numerous misprints, typos, small inaccuracies and mistakes thus
substantially helping to improve the paper.}

\begin{abstract}
We recall five families of polynomials constituting a part of the so-called
Askey--Wilson scheme. We do this to expose properties of the Askey--Wilson
(AW) polynomials that constitute the last, most complicated element of this
scheme. In doing so we express AW density as a product of the density that
makes $q-$Hermite polynomials orthogonal times a product of four
characteristic function of $q-$Hermite polynomials (\ref{fAW}) just pawing
the way to a generalization of AW integral. Our main results concentrate
mostly on the complex parameters case forming conjugate pairs. We present
new fascinating symmetries between the variables and some newly defined (by
the appropriate conjugate pair) parameters. In particular in (\ref%
{rozwiniecie1}) we generalize substantially famous Poisson-Mehler expansion
formula (\ref{PM}) in which $q-$Hermite polynomials are replaced by
Al-Salam--Chihara polynomials. Further we express Askey--Wilson polynomials
as linear combinations of Al-Salam--Chihara (ASC) polynomials. As a
by-product we get useful identities involving ASC polynomials. Finally by
certain re-scaling of variables and parameters we reach AW polynomials and
AW densities that have clear probabilistic interpretation.
\end{abstract}

\maketitle

\section{Introduction}

As it is well known AW polynomials were introduced in \cite{AW85} and
constitute the biggest (as of now) family of orthogonal polynomials
depending on $5$ parameters. One of these parameters is special, called base
and denoted by $q.$ In most of the applications where AW polynomials appear $%
q$ is a real from $(-1,1).$ Sometimes one considers also the case $%
q\allowbreak =\allowbreak 1$ (mostly as limiting case in probabilistic
applications) and $q>1.$ These polynomials are important in many
applications such as special functions theory (e.g. \cite{IA}),
combinatorics (e.g. \cite{Corteel10}), both non-commutative (e.g. \cite{Bo})
and classical probability (e.g. \cite{Bryc2001S} \cite{Bryc-wes10}) or last
but not least quantum mechanics (e.g. \cite{Flo97}, \cite{Floreanini97}).
Although diminished by the specialists on special functions theory as
trivial or not interesting the case $q=0$ is very important in
non-commutative probability in particular in a newly emerging so called
'free probability' to mention only \cite{Bo} and later works of R. Speicher
or D. Voiculescu and their colleagues and associates.

We present a sequence of families of orthogonal polynomials (elements of the
so called AW scheme). In this sequence the AW polynomials constitute the
last element (i.e. depending on the biggest number of parameters). By
considering this sequence we are able to notice some regularities that if
properly extended might lead to a generalization of AW polynomials. To see
these regularities we recall $5$ families of polynomials that belong to the
so-called Askey--Wilson scheme of polynomials. We present some believed to
be new, finite and infinite expansions of one family of the polynomials with
respect to the other as well as expansions of the ratio of the densities of
measures (Radon--Nikodym derivatives in fact) that make these families
orthogonal. The families of polynomials that we are going to recall are
related by the fact that respectively $4$ (leading to continuous $q-$Hermite
(qH)), $3$ (leading to big continuous $q-$Hermite (bqH)), $2$ (leading to
Al-Salam--Chihara (ASC)), $1$ (leading to continuous dual Hahn (C2H)), and
finally $0$ (leading to Askey--Wilson polynomials (AW)) of $4$ parameters
that appear in the definition of Askey--Wilson polynomials (apart of the $5$%
th parameter - the base) are set to zero.

Much is known about these polynomials however far from all. In particular
only recently moments of the AW distribution that makes these polynomials
orthogonal ware calculated (see e.g. \cite{Corteel10} or \cite{Szabl-JFA}).

The paper aims to increase this knowledge. In particular we derive the
so-called 'connection coefficients' between considered families of
polynomials and try to derive as much as possible information about these
polynomials from these coefficients. The connection coefficients are
formally known (see e.g. formulae in \cite{AW85} and \cite{IA}). In this
paper we identify these coefficients in terms of known polynomials of
parameters obtaining useful formulae especially in the case of complex
parameters.

In particular knowing the connection coefficients we are able to relate
densities (if they exist) of measures that make these polynomials
orthogonal. The idea how to get it was presented in \cite{Szabl-exp}. We
will recall this idea now briefly since below we use it to quickly and
intuitively derive the densities that make involved polynomials orthogonal.

More precisely let us assume that we have two positive probability measures $%
d\alpha $ and $d\beta $ such that $d\beta <<d\alpha $ and let respectively $%
\left\{ a_{n}\left( x\right) \right\} _{n\geq 0}$ and $\left\{ b_{n}\left(
x\right) \right\} _{n\geq 0}$ denote sets of monic polynomials orthogonal
with respect to $d\alpha $ and $d\beta .$

Suppose that we know the numbers $\gamma _{k,n}$ ('connection coefficients')
such that for every $n\geq 0:$%
\begin{equation*}
a_{n}\left( x\right) \allowbreak =\allowbreak \sum_{k=0}^{n}\gamma
_{k,n}b_{k}\left( x\right)
\end{equation*}%
for every $x\in \mathbb{R}$. Let $\frac{d\beta }{d\alpha }$ denote
Radon--Nikodym derivative of measures $d\beta $ and $d\alpha .$ Assume also
that $\int_{\limfunc{supp}\left( \beta \right) }(\frac{d\beta }{d\alpha }%
(x))^{2}d\alpha (x)<\infty ,$ then :%
\begin{equation}
\frac{d\beta }{d\alpha }\left( x\right) \allowbreak =\allowbreak
\sum_{n=0}^{\infty }\frac{\gamma _{0,n}\hat{b}_{0}}{\hat{a}_{n}}a_{n}\left(
x\right) ,  \label{series}
\end{equation}%
where $\hat{a}_{n}\allowbreak =\allowbreak \int_{\limfunc{supp}\left( \alpha
\right) }a_{n}\left( x\right) ^{2}d\alpha \left( x\right) ,$ similarly for
the polynomials $\left\{ b_{n}\right\} .$ If $d\beta $ and $d\alpha $ have
densities respectively $B(x)$ and $A(x)$ then formula (\ref{series}) may be
written as 
\begin{equation}
B(x)=A(x)\sum_{n=0}^{\infty }\frac{\gamma _{0,n}\hat{b}_{0}}{\hat{a}_{n}}%
a_{n}\left( x\right) .  \label{series1}
\end{equation}%
Convergence in (\ref{series}) is $L^{2}$ convergence, however if
coefficients $\frac{\gamma _{0,n}\hat{b}_{0}}{\hat{a}_{n}}$ are such that $%
\sum_{n\geq 0}\left( \frac{\gamma _{0,n}\hat{b}_{0}}{\hat{a}_{n}}\right)
^{2}\log ^{2}n$\allowbreak $<\allowbreak \infty ,$ then by the
Rademacher--Menshov theorem (see e.g. \cite{Alexits}) we have almost
pointwise, absolute convergence. In most cases interesting in the $q-$series
theory this condition is trivially satisfied hence all expansions we are
going to consider will be absolutely almost pointwise convergent.

Our main results concentrate on the complex parameters case and are
presented in Proposition \ref{conversion} and Theorem \ref{main}. Then the
considered families of polynomials are orthogonal with respect to the
distributions that have densities. For such parameters after additional
re-scaling and re-normalization we get clear probability interpretation of
the analyzed densities and polynomials. The interpretation is important for
both the commutative (i.e. the classical) as well as in the non-commutative
probability case. In particular in the relatively new and intensively
developing so-called 'free probability'.

As a by-product we get nontrivial finite identities involving rational
functions of $4$ variables (see Corollary \ref{identies}).

It turns out that in this 'complex parameter' case there appear new
fascinating symmetries between the variables and some of the parameters. In
particular we express Askey--Wilson polynomials as linear combinations of
Al-Salam--Chihara polynomials which together with obtained earlier expansion
of the Askey--Wilson density forms complete generalization of the situation
met in the case of Al-Salam--Chihara and $q-$Hermite polynomials and the
Poisson--Mehler expansion formula (see Corollary \ref{AWexp} and Remark \ref%
{generalization}).

The paper is organized as follows. In the next Section \ref{intr} we present
elements of the useful notation used in the so called $q-$series theory,
define AW polynomials and relate them to other families of orthogonal
polynomials that are the object of this paper and point out relationships
between them that will be important in the sequel. The results in this
section are rather easy if not trivial and we present them and sketch their
proofs for the sake of completeness of the paper.

In Section \ref{complex} we present our main results. Starting with the
useful conversion proposition (Proposition \ref{conversion}) that allows to
express a certain combination of complex exponentials in the form of a
certain combination of $q-$Hermite and Al-Salam--Chihara polynomials, we
present our main result (Theorem \ref{main}) that is the finite expansion of 
$n$-th Askey--Wilson polynomial with complex parameters as a linear
combination of ASC polynomials with coefficients that have a form of some
polynomial closely related to ASC polynomials and conversely. That is we
present $n-$th ASC polynomial as a linear combination of AW polynomials with
coefficients being ASC polynomials of certain parameters. In Section \ref%
{prob} we give probabilistic interpretation of the AW polynomials as well as
the AW density. We also present the exact form of these polynomials and AW
density for the special cases $q\allowbreak =\allowbreak 1$ and $%
q\allowbreak =\allowbreak 0.$ The last case being important for the free
probability. Last Section \ref{dowody} presents less interesting or longer
proofs.

\section{Notation and definitions of families of orthogonal polynomials\label%
{intr}}

\subsection{Notation}

To help readers not familiar with notation used in the $q-$series theory we
present below a brief review of the notation used in it.

$q$ is a parameter. We will assume that $-1<q\leq 1$ unless otherwise stated.

We set $\left[ 0\right] _{q}\allowbreak =\allowbreak 0,$ $\left[ n\right]
_{q}\allowbreak =\allowbreak 1+q+\ldots +q^{n-1}\allowbreak ,$ $\left[ n%
\right] _{q}!\allowbreak =\allowbreak \prod_{j=1}^{n}\left[ j\right] _{q},$
with $\left[ 0\right] _{q}!\allowbreak =1,$%
\begin{equation*}
\QATOPD[ ] {n}{k}_{q}\allowbreak =\allowbreak \left\{ 
\begin{array}{ccc}
\frac{\left[ n\right] _{q}!}{\left[ n-k\right] _{q}!\left[ k\right] _{q}!} & 
, & n\geq k\geq 0, \\ 
0 & , & otherwise.%
\end{array}%
\right.
\end{equation*}%
We will use also the so called $q-$Pochhammer symbol for $n\geq 1:$%
\begin{equation*}
\left( a;q\right) _{n}=\prod_{j=0}^{n-1}\left( 1-aq^{j}\right) ,~~\left(
a_{1},a_{2},\ldots ,a_{k};q\right) _{n}\allowbreak =\allowbreak
\prod_{j=1}^{k}\left( a_{j};q\right) _{n},
\end{equation*}%
with $\left( a;q\right) _{0}=1$.

Often $\left( a;q\right) _{n}$ as well as $\left( a_{1},a_{2},\ldots
,a_{k};q\right) _{n}$ will be abbreviated to $\left( a\right) _{n}$ and 
\newline
$\left( a_{1},a_{2},\ldots ,a_{k}\right) _{n},$ if the base will be $q$ and
if such abbreviation will not cause misunderstanding.

We will need also the following four well known, easy to derive, formulae
that are valid for $q\neq 0$ and $a\neq 0:$%
\begin{eqnarray}
\left( a\right) _{n}\allowbreak &=&\allowbreak \left( -1\right) ^{n}q^{%
\binom{n}{2}}a^{n}\left( a^{-1};q^{-1}\right) _{n},~\left( a;q^{-1}\right)
_{n}=\left( -1\right) ^{n}q^{-\binom{n}{2}}a^{n}\left( 1/a\right) _{n},
\label{q_q^-1} \\
\left( a\right) _{n} &=&\left( aq^{n-1};q^{-1}\right) _{n},~\QATOPD[ ] {n}{k}%
_{q^{-1}}=\QATOPD[ ] {n}{k}_{q}q^{k(k-n)}.  \label{qbinq-1}
\end{eqnarray}

It is easy to notice that for $q\allowbreak \in \allowbreak (-1,1)$ we have: 
$\left( q\right) _{n}=\left( 1-q\right) ^{n}\left[ n\right] _{q}!$ and that 
\begin{equation*}
\QATOPD[ ] {n}{k}_{q}\allowbreak =\allowbreak \allowbreak \left\{ 
\begin{array}{ccc}
\frac{\left( q\right) _{n}}{\left( q\right) _{n-k}\left( q\right) _{k}} & ,
& n\geq k\geq 0, \\ 
0 & , & otherwise.%
\end{array}%
\right.
\end{equation*}%
\newline
To support intuition let us notice that 
\begin{equation*}
\left[ n\right] _{1}\allowbreak =\allowbreak n,\left[ n\right]
_{1}!\allowbreak =\allowbreak n!,~~\QATOPD[ ] {n}{k}_{1}\allowbreak
=\allowbreak \binom{n}{k},~~\left( a;1\right) _{n}\allowbreak =\allowbreak
\left( 1-a\right) ^{n}
\end{equation*}%
and 
\begin{eqnarray*}
\left[ n\right] _{0}\allowbreak &=&\allowbreak \left\{ 
\begin{array}{ccc}
1 & if & n\geq 1, \\ 
0 & if & n=0,%
\end{array}%
\right. \left[ n\right] _{0}!\allowbreak =\allowbreak 1,\QATOPD[ ] {n}{k}%
_{0}\allowbreak =\allowbreak 1, \\
\left( a;0\right) _{n}\allowbreak &=&\allowbreak \left\{ 
\begin{array}{ccc}
1 & if & n=0, \\ 
1-a & if & n\geq 1.%
\end{array}%
\right.
\end{eqnarray*}

\subsection{Askey--Wilson and other related families of polynomials}

These families of orthogonal polynomials form a part of the so called
Askey--Wilson scheme of orthogonal polynomials. It means that they are
interrelated and each member of this sequence depends on one more parameter.
All of the families that will be presented below depend on one common
parameter - the base denoted by $q$ such that $q\allowbreak >\allowbreak -1.$
For $q\in (-1,1)$ these polynomials are orthogonal with respect to some
positive measure on $[-1,1]$ under some conditions imposed on the values of
parameters. Hence we will mostly concentrate on the case $q\in (-1,1).$ The
conditions imposed on the parameters consist in fact of the requirement that
the products of all pairs of these parameters are real and have absolute
values not exceeding $1.$ If one of the parameters, say $a$ is real and
greater than $1$ then one has to assume that $q\neq 0$ and then the measure
has $\#\left\{ k:aq^{k}>1\right\} $ jumps located at points $%
x_{k}\allowbreak =\allowbreak \left( (aq^{k}+(aq^{k}\right) ^{-1})/2.$ The
masses that are assigned to these points depend on the polynomial. Since our
main concern is with the regular, 'without jumps' case we will not give
those masses.

Askey--Wilson (AW) polynomials $\left\{ \alpha _{n}\right\} $ were
introduced in \cite{AW85}. Its original definition was given through the
basic hypergeometric function. Fortunately there exist other
characterizations of this family of polynomials. Particularly there exists a
characterization through three-term recurrence relation . Namely let $%
p_{n}\left( x|a,b,c,d,q\right) $ denote AW polynomial as presented in (\cite%
{KLS}, (14.1.1)) and let us define 
\begin{equation}
\alpha _{n}\left( x|a,b,c,d,q\right) \allowbreak =\allowbreak p_{n}\left(
x|a,b,c,d,q\right) /\left( abcdq^{n-1}\right) _{n}.  \label{correction}
\end{equation}%
Then we get the following three-term recurrence relation to be satisfied by
the polynomials $\alpha _{n}$:%
\begin{equation}
2x\alpha _{n}\left( x\right) =\alpha _{n+1}\left( x\right) +e_{n}\left(
a,b,c,d,q\right) \alpha _{n}(x)-f_{n}\left( a,b,c,d,q\right) \alpha
_{n-1}(x),  \label{_aw}
\end{equation}%
with $\alpha _{-1}\left( x\right) \allowbreak =\allowbreak 0,$ $\alpha
_{0}\left( x\right) \allowbreak =\allowbreak 1,$ where for simplicity we
denoted $\alpha _{n}(x)\allowbreak =$\newline
$\allowbreak \alpha _{n}\left( x|a,b,c,d,q\right) $ and $f_{n}$ and $e_{n}$
are given below as assertion of the Proposition \ref{wsp}.

We have immediate

\begin{proposition}
\label{wsp}%
\begin{gather*}
f_{n}(a,b,c,d,q)=(1-q^{n})\times \\
\frac{\left(
abq^{n-1},acq^{n-1},adq^{n-1},bcq^{n-1},bdq^{n-1},cdq^{n-1},abcdq^{n-2}%
\right) _{1}}{(abcdq^{2n-3})_{3}(abcdq^{2n-2})_{1}}, \\
e_{n}(a,b,c,d,q)=\frac{q^{n-2}}{(1-abcdq^{2n-2})\left( 1-abcdq^{2n}\right) }%
\times \\
((a+b+c+d)(q^{2}-abcdq^{n}(1+q-q^{n+1}))+ \\
(abc+abd+acd+bcd)(q-q^{n+2}-q^{n+1}+abcdq^{2n})).
\end{gather*}
\end{proposition}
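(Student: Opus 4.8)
The plan is to read off the recurrence (\ref{_aw}) together with the formulas for $e_n$ and $f_n$ from the classical three-term recurrence of the Askey--Wilson polynomials combined with the renormalization (\ref{correction}). Write the polynomial of \cite{KLS}, (14.1.1), as $p_n=a^{-n}(ab,ac,ad;q)_n\,\tilde p_n$, where $\tilde p_n$ is the basic hypergeometric (${}_{4}\phi_{3}$) factor. The classical recurrence can be put in the form
\[
2x\,\tilde p_n(x)=A_n\,\tilde p_{n+1}(x)+\bigl(a+a^{-1}-(A_n+C_n)\bigr)\tilde p_n(x)+C_n\,\tilde p_{n-1}(x),
\]
\[
A_n=\frac{(1-abq^n)(1-acq^n)(1-adq^n)(1-abcdq^{n-1})}{a(1-abcdq^{2n-1})(1-abcdq^{2n})},\qquad
C_n=\frac{a(1-q^n)(1-bcq^{n-1})(1-bdq^{n-1})(1-cdq^{n-1})}{(1-abcdq^{2n-2})(1-abcdq^{2n-1})}
\]
(see \cite{KLS}). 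Since $\alpha_n=p_n/(abcdq^{n-1};q)_n=\nu_n\tilde p_n$ with $\nu_n=a^{-n}(ab,ac,ad;q)_n/(abcdq^{n-1};q)_n$, the passage from $\tilde p_n$ to $\alpha_n$ is a diagonal rescaling: the middle coefficient is unchanged, while the $\tilde p_{n\pm1}$-coefficients get multiplied by $\nu_n/\nu_{n\pm1}$.

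The ratios $\nu_n/\nu_{n\pm1}$ are short telescopings of $q$-Pochhammer symbols. Using $(abcdq^{n};q)_{n+1}/(abcdq^{n-1};q)_n=(1-abcdq^{2n-1})(1-abcdq^{2n})/(1-abcdq^{n-1})$ one gets
\[
\frac{\nu_n}{\nu_{n+1}}=\frac{a(1-abcdq^{2n-1})(1-abcdq^{2n})}{(1-abq^n)(1-acq^n)(1-adq^n)(1-abcdq^{n-1})},
\]
hence $A_n\cdot(\nu_n/\nu_{n+1})=1$; this is precisely why (\ref{correction}) normalizes the coefficient of $\alpha_{n+1}$ in (\ref{_aw}) to $1$, and it yields $e_n=a+a^{-1}-(A_n+C_n)$ at once. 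Likewise
\[
\frac{\nu_n}{\nu_{n-1}}=\frac{(1-abq^{n-1})(1-acq^{n-1})(1-adq^{n-1})(1-abcdq^{n-2})}{a(1-abcdq^{2n-3})(1-abcdq^{2n-2})},
\]
and multiplying this by $C_n$ the factors $a$ cancel; recalling $(abcdq^{2n-3};q)_3(abcdq^{2n-2};q)_1=(1-abcdq^{2n-3})(1-abcdq^{2n-2})^2(1-abcdq^{2n-1})$, this product is exactly the displayed $f_n$ (equivalently $f_n=A_{n-1}C_n$ up to the sign convention in (\ref{_aw}), since the product of two consecutive off-diagonal recurrence coefficients is invariant under diagonal rescaling). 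This half of the proposition is pure bookkeeping.

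The only step requiring genuine work is to recast $e_n=a+a^{-1}-(A_n+C_n)$ into the manifestly symmetric shape asserted. Putting the right-hand side over the common denominator $a(1-abcdq^{2n-2})(1-abcdq^{2n-1})(1-abcdq^{2n})$, one must verify that the numerator is divisible by $a(1-abcdq^{2n-1})$ and that the quotient collapses to $q^{n-2}\bigl[(a+b+c+d)(q^2-abcdq^n(1+q-q^{n+1}))+(abc+abd+acd+bcd)(q-q^{n+2}-q^{n+1}+abcdq^{2n})\bigr]$. I expect this to be the main obstacle. The cleanest way to organize it is to use that $p_n$, hence $\alpha_n$ and $e_n$, is invariant under permutations of $a,b,c,d$: thus $e_n$ is a symmetric rational function of $a,b,c,d$ with denominator $(1-abcdq^{2n-2})(1-abcdq^{2n})$, its numerator is a combination of the elementary symmetric functions and their products with $abcd$ (a short explicit list, visible from the forms of $A_n,C_n$), and one pins down the coefficients by matching a few values of $n$ — notably $n=0$, where $C_0=0$ and $e_0=a+a^{-1}-A_0=\bigl((a+b+c+d)-(abc+abd+acd+bcd)\bigr)/(1-abcd)$ — observing along the way that the second elementary symmetric function $ab+ac+ad+bc+bd+cd$ does not occur. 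Alternatively one simply expands $a+a^{-1}-(A_n+C_n)$ and checks the resulting polynomial identity directly. Either route is elementary, but this last simplification is where the substance of the proof lies.
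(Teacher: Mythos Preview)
Your approach is essentially identical to the paper's: the paper's proof simply introduces the same $A_n$ and $C_n$ from \cite{KLS} and asserts that ``by straightforward calculations'' one has $e_n=a+a^{-1}-A_n-C_n$ and $f_n=A_{n-1}C_n$, without spelling out the rescaling via $\nu_n$ or the symmetrization of $e_n$ that you describe. Your write-up is therefore a more detailed version of the same argument.
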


\begin{proof}
Technical, uninteresting proof is moved to Section \ref{dowody}. It is based
on formula (\cite{KLS}, (14.1.4)).
\end{proof}

From Favard's theorem it follows that polynomials $\left\{ \alpha
_{n}\right\} $ are orthogonal with respect to the positive measure provided $%
f_{n}\left( a,b,c,d,q\right) $ is nonnegative for all $n\geq 1.$ Examining
the form of $f_{n}$ and $e_{n}$ we see that this requirement can be reduced
to the requirement that $\left\vert ab\right\vert ,$ $\left\vert
ac\right\vert ,$ $\left\vert ad\right\vert ,$ $\left\vert bc\right\vert ,$ $%
\left\vert bd\right\vert ,$ $\left\vert cd\right\vert \allowbreak \leq
\allowbreak 1.$ If $\left\vert a\right\vert ,$ $\left\vert b\right\vert ,$ $%
\left\vert c\right\vert ,$ $\left\vert d\right\vert \leq 1$ then this
measure has density.

Let us remark that AW polynomials with some of the parameters $a,b,c,d$ set
to zero constitute new families of polynomials and have separate names.
Namely 
\begin{eqnarray*}
\alpha _{n}(x|0,b,c,d,q)\allowbreak &=&\allowbreak \psi
_{n}(x|b,c,d,q),~\alpha _{n}(x|0,0,c,d,q)=Q_{n}(x|c,d,q), \\
\alpha _{n}(x|0,0,c,d,q) &=&h_{n}(x|d,q),~\alpha
_{n}(x|0,0,0,0,q)=h_{n}(x|q),
\end{eqnarray*}%
and polynomials $\left\{ \psi _{n}(x|b,c,d,q)\right\} ,$ $\left\{
Q_{n}(x|c,d,q)\right\} ,$ $\left\{ h_{n}(x|d,q)\right\} ,$ $\left\{
h_{n}(x|q)\right\} $ are called in the literature (see e.g. \cite{IA} or 
\cite{KLS}) respectively continuous dual Hahn, Al-Salam--Chihara, continuous
big $q-$Hermite and continuous $q-$Hermite.

These polynomials are related to one another by the relationships similar to
(\ref{a_na_c}) and (\ref{c_na_a}) (below) with appropriate parameters set to
zero.

\begin{lemma}
\label{conAW-c2h}We have 
\begin{gather}
\alpha _{n}(x|a,b,c,d,q)\allowbreak =\allowbreak \sum_{i=0}^{n}\QATOPD[ ] {n%
}{i}_{q}\left( -a\right) ^{n-i}q^{\binom{n-i}{2}}\frac{\left(
bcq^{i},bdq^{i},cdq^{i}\right) _{n-i}}{\left( abcdq^{n+i-1}\right) _{n-i}}%
\psi _{i}\left( x|b,c,d,q\right) ,  \label{a_na_c} \\
\psi _{n}\left( x|b,c,d,q\right) =\sum_{i=0}^{n}\QATOPD[ ] {n}{i}_{q}a^{n-i}%
\frac{\left( bcq^{i},bdq^{i},cdq^{i}\right) _{n-i}}{\left( abcdq^{2i}\right)
_{n-i}}\alpha _{i}\left( x|a,b,c,d,q\right) .  \label{c_na_a}
\end{gather}
\end{lemma}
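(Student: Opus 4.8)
The plan is to prove the two expansions in Lemma \ref{conAW-c2h} by a generating-function / three-term-recurrence argument, establishing \eqref{a_na_c} first and then obtaining \eqref{c_na_a} by inversion. First I would recall that setting $a=0$ in the Askey--Wilson recurrence \eqref{_aw} (using the explicit $e_n,f_n$ from Proposition \ref{wsp}) produces exactly the three-term recurrence satisfied by the continuous dual Hahn polynomials $\psi_n(x|b,c,d,q)$; this identifies $\psi_n$ as the ``$a\to 0$'' specialization and fixes the normalization, so that both $\alpha_n$ and $\psi_n$ are monic in the variable $2x$ (equivalently, monic of the form $2^n x^n+\dots$). Since for each fixed $n$ the polynomials $\psi_0,\dots,\psi_n$ form a basis of the space of polynomials of degree $\le n$, an expansion $\alpha_n(x|a,b,c,d,q)=\sum_{i=0}^n \gamma_{i,n}\,\psi_i(x|b,c,d,q)$ exists with $\gamma_{n,n}=1$; the content of the lemma is the closed form of $\gamma_{i,n}$.

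To pin down $\gamma_{i,n}$ I would substitute the ansatz into \eqref{_aw} and use the dual Hahn recurrence for $\psi_i$ to convert $2x\psi_i$ into $\psi_{i+1}+\tilde e_i\psi_i-\tilde f_i\psi_{i-1}$, where $\tilde e_i=e_i(0,b,c,d,q)$, $\tilde f_i=f_i(0,b,c,d,q)$. Matching coefficients of $\psi_i$ on both sides yields a two-term recurrence in $n$ for $\gamma_{i,n}$ of the schematic form
\begin{equation*}
\gamma_{i,n+1}=\gamma_{i-1,n}+\bigl(\tilde e_i-e_n(a,b,c,d,q)\bigr)\gamma_{i,n}-\bigl(\tilde f_{i+1}-f_n(a,b,c,d,q)\bigr)\gamma_{i+1,n}.
\end{equation*}
Then I would simply verify, by direct substitution, that the proposed closed form
\begin{equation*}
\gamma_{i,n}=\QATOPD[ ] {n}{i}_{q}(-a)^{n-i}q^{\binom{n-i}{2}}\frac{(bcq^i,bdq^i,cdq^i)_{n-i}}{(abcdq^{n+i-1})_{n-i}}
\end{equation*}
satisfies this recurrence together with $\gamma_{i,i}=1$ and $\gamma_{i,n}=0$ for $i>n$. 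This reduces the lemma to a finite $q$-hypergeometric identity: after cancelling the common $q$-binomial and Pochhammer factors, both sides become rational functions of $q^n,q^i,a,b,c,d$, and equality follows from routine (if tedious) manipulation of $q$-Pochhammer symbols, repeatedly using $(x)_{m+1}=(1-x)(xq)_m=(x)_m(1-xq^m)$ and the shift relations \eqref{q_q^-1}--\eqref{qbinq-1}. An alternative, possibly cleaner route is to multiply by the appropriate weights, sum against the generating function $\sum_n \alpha_n(x)t^n$ whose product form is known from \cite{KLS}, and read off \eqref{a_na_c} by comparing with the known generating function for $\psi_n$; I would keep this in reserve in case the bare recurrence bookkeeping becomes unwieldy.

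For \eqref{c_na_a} I would invert the triangular system \eqref{a_na_c}. Since the matrix $(\gamma_{i,n})$ is lower-triangular with unit diagonal, its inverse $(\delta_{i,n})$ exists and is characterized by $\sum_{j=i}^{n}\gamma_{i,j}\delta_{j,n}=[i=n]$. I would guess the inverse to be
\begin{equation*}
\delta_{i,n}=\QATOPD[ ] {n}{i}_{q}a^{n-i}\frac{(bcq^i,bdq^i,cdq^i)_{n-i}}{(abcdq^{2i})_{n-i}},
\end{equation*}
and verify the orthogonality relation $\sum_{j}\gamma_{i,j}\delta_{j,n}=[i=n]$ directly; this is again a single terminating $q$-sum which can be recognized as a special case of the $q$-analogue of the Vandermonde/Pfaff--Saalschütz summation, or verified by telescoping. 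Note the pleasant sign flip $(-a)^{n-i}q^{\binom{n-i}{2}}\leftrightarrow a^{n-i}$ and denominator shift $(abcdq^{n+i-1})_{n-i}\leftrightarrow(abcdq^{2i})_{n-i}$, which is exactly the pattern one expects from the inversion formulae \eqref{q_q^-1}. The main obstacle throughout is purely computational: organizing the $q$-Pochhammer algebra in the coefficient-matching step so that the finite sums collapse to the claimed form without error; there is no conceptual difficulty once the recurrence for $\gamma_{i,n}$ is set up, and the limiting/degenerate cases ($a=0$, or one of $b,c,d$ zero) provide useful sanity checks at each stage.
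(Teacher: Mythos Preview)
Your approach is genuinely different from the paper's. The paper does not derive the connection coefficients from scratch: it simply specializes the general Askey--Wilson connection formula \cite[(6.4)]{AW85}, which gives the coefficients expanding $\alpha_n(\cdot|a,b,c,d,q)$ in terms of $\alpha_k(\cdot|\alpha,b,c,d,q)$ for arbitrary $\alpha$. Setting $\alpha\mapsto a$, $a\mapsto 0$ (so that the target family is $\psi_k$) and accounting for the renormalization \eqref{correction}, together with the limit $a^{m}(\alpha/a)_{m}\to(-\alpha)^{m}q^{\binom{m}{2}}$ as $a\to0$ and the rewriting $(q^{-n})_k=(-1)^kq^{-nk+\binom{k}{2}}\QATOPD[ ] {n}{k}_{q}(q)_k$, yields \eqref{a_na_c} in a few lines; \eqref{c_na_a} comes from the same formula with $\alpha=0$. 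So the paper's proof is essentially a citation plus bookkeeping, whereas you propose an ab initio recurrence verification. Your route is self-contained and in principle works, but it is considerably longer and reproduces a result already available in closed form in \cite{AW85}.

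There is also a concrete slip in your coefficient-matching step. Substituting $\alpha_n=\sum_i\gamma_{i,n}\psi_i$ into \eqref{_aw} and using the $\psi$-recurrence gives, after matching the coefficient of $\psi_i$,
\[
\gamma_{i,n+1}=\gamma_{i-1,n}+(\tilde e_i-e_n)\gamma_{i,n}-\tilde f_{i+1}\,\gamma_{i+1,n}+f_n\,\gamma_{i,n-1},
\]
not the relation you wrote: the $f_n$ contribution comes from $-f_n\alpha_{n-1}$ and therefore multiplies $\gamma_{i,n-1}$, not $\gamma_{i+1,n}$. Thus the recursion is genuinely two-step in $n$ (it involves $\gamma_{\cdot,n-1}$, $\gamma_{\cdot,n}$, $\gamma_{\cdot,n+1}$), and the verification you have in mind must track one more term than you indicated. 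This does not kill the strategy, but it does make the ``routine (if tedious)'' verification noticeably heavier, and you should correct the displayed recurrence before attempting it. Your plan for \eqref{c_na_a} via matrix inversion and a terminating $q$-Saalsch\"utz/Vandermonde sum is sound and is the standard way to invert such a triangular system; the paper instead obtains \eqref{c_na_a} by a second direct application of \cite[(6.4)]{AW85}.
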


\begin{proof}
Strongly basing on formula (\cite{AW85}, (6.4)) is shifted to Section \ref%
{dowody}.
\end{proof}

\begin{remark}
In deriving (\ref{a_na_c}) and (\ref{c_na_a}) we used (\cite{AW85}, (6.4)).
Note that there exits also slightly different formula (\cite{IA}, (16.4.3)).
\end{remark}

Let us introduce two functions whose definitions were taken from \cite{KLS}.
They have the following interpretation. One of them is a generating function
of $q-$Hermite polynomials i.e.

\begin{equation}
\varphi _{h}\left( x|\rho ,q\right) =\sum_{n\geq 0}\frac{\rho ^{n}}{\left(
q\right) _{n}}h_{n}\left( x|q\right) =\frac{1}{\prod_{i=0}^{\infty }v\left(
x|\rho q^{i}\right) },\text{ with }v\left( x|t\right) \allowbreak
=\allowbreak 1-2xt+t^{2}  \label{fi_h}
\end{equation}%
convergent for $\left\vert \rho \right\vert <1,$ $\left\vert x\right\vert
\leq 1.$ (\cite{KLS}, (14.26.11)). Let us observe that for all $x\in \lbrack
-1,1],$ $t\in \mathbb{R}:v\left( x|t\right) \geq 0.$

The other one is the density with respect to which $q-$ Hermite polynomials
are orthogonal i.e. we have:%
\begin{equation*}
\int_{\lbrack -1,1]}h_{n}\left( x|q\right) h_{m}\left( x|q\right)
f_{h}\left( x|q\right) dx=\delta _{m,n}\left( q\right) _{n},
\end{equation*}%
where%
\begin{equation}
f_{h}\left( x|q\right) \allowbreak =\allowbreak \frac{2\left( q\right)
_{\infty }\sqrt{1-x^{2}}}{\pi }\prod_{i=1}^{\infty }l\left( x|q^{i}\right) ,
\label{ort_h}
\end{equation}%
with $l\left( x|a\right) \allowbreak =\allowbreak (1+a)^{2}-4ax^{2}$ (see 
\cite{KLS}, (14.26.2)). Notice that for all $\left\vert x\right\vert \leq
1:l\left( x|a\right) \geq 0.$

Using these functions one can express densities of the measures that make
other families of polynomials orthogonal. Namely it is an easy observation
that formulae respectively (14.1.2), (14.3.2), (14.8.2), (14.18.2) of \cite%
{KLS} can be written in form for $\left\vert a\right\vert ,$ $\left\vert
b\right\vert ,$ $\left\vert c\right\vert ,$ $\left\vert d\right\vert <1$ 
\begin{gather}
f_{AW}\left( x|a,b,c,d,q\right) =f_{h}\left( x|q\right) \varphi _{h}\left(
x|a,q\right) \varphi _{h}\left( x|b,q\right) \varphi _{h}\left( x|c,q\right)
\varphi _{h}\left( x|d,q\right) \times  \label{fAW} \\
\frac{\left( ab,ac,ad,bc,bd,cd\right) _{\infty }}{\left( abcd\right)
_{\infty }},  \notag \\
f_{\psi }\left( x|a,b,c,q\right) \allowbreak =\allowbreak f_{h}\left(
x|q\right) \varphi _{h}\left( x|a,q\right) \varphi _{h}\left( x|b,q\right)
\varphi _{h}\left( x|c,q\right) \left( ab,ac,bc\right) _{\infty },
\label{gest-psi} \\
f_{Q}\left( x|a,b,q\right) =f_{h}\left( x|q\right) \varphi _{h}\left(
x|a,q\right) \varphi _{h}\left( x|b,q\right) \left( ab\right) _{\infty },
\label{gest_Q} \\
f_{bH}\left( x|a,q\right) =\allowbreak f_{h}\left( x|q\right) \varphi
_{h}\left( x|a,q\right) .  \label{f_bh}
\end{gather}
These functions are the densities of measures that make respectively AW,
C2H, ASC, bqH polynomials orthogonal. Let us remark that we can get these
formulae expressing densities using the presented above idea of the ratio of
the density expansion and formulae (\ref{c_na_a}) and (\ref{a_na_c}) with
appropriate parameters set to zero as 'suppliers' of connection coefficients$%
.$ We can also follow the path indicated in the Remark below and get (\ref%
{fAW}) and then by assigning zero values to appropriate parameters get the
remaining densities.

Let us remark that the above mentioned expressions for the densities $%
f_{AW}, $ $f_{\psi },$ $f_{Q},$ $f_{bH}$ can be the base for a
generalization of Askey--Wilson integral that is the statement that $\int
f_{AW}\allowbreak =\allowbreak 1.$ More precisely they allow definition of
nonnegative functions of $x$ depending on more than $4$ parameters that when
reduced to respectively $4,$ $3,$ $2,$ $1$ parameters (the others set to
zero) are equal to $f_{AW},$ $f_{\psi },$ $f_{Q},$ $f_{bH}.$ For details see 
\cite{Szabl-intAW}.

\begin{remark}
Following the fact that $\int_{-1}^{1}f_{AW}\left( x|a,b,c,d,q\right)
dx\allowbreak =\allowbreak 1$ and of course (\ref{fAW}) we have:%
\begin{eqnarray*}
&&\sum_{j,k,n,m}\frac{a^{j}b^{k}c^{n}d^{m}}{\left( q\right) _{j}\left(
q\right) _{k}\left( q\right) _{n}\left( q\right) _{m}}\int_{-1}^{1}h_{j}%
\left( x|q\right) h_{k}\left( x|q\right) h_{n}\left( x|q\right) h_{m}\left(
x|q\right) f_{h}\left( x|q\right) dx \\
&=&\frac{\left( abcd\right) _{\infty }}{\left( ab,ac,ad,bc,bd,cd\right)
_{\infty }},
\end{eqnarray*}%
i.e. generating function of the numbers \newline
$\left\{ \int_{-1}^{1}h_{j}\left( x|q\right) h_{k}\left( x|q\right)
h_{n}\left( x|q\right) h_{m}\left( x|q\right) f_{h}\left( x|q\right)
dx\right\} _{j,k,n,m\geq 0}$ for free. Similarly by setting say $%
a\allowbreak =\allowbreak 0$ in the above mentioned formula we can get
generating function of the numbers $\left\{ \int_{-1}^{1}h_{j}\left(
x|q\right) h_{k}\left( x|q\right) h_{n}\left( x|q\right) f_{h}\left(
x|q\right) dx\right\} _{j,k,n\geq 0}$. \newline
Numbers $\int_{-1}^{1}h_{i}\left( x|q\right) h_{j}\left( x|q\right)
h_{k}\left( x|q\right) f_{h}\left( x|q\right) dx$ have been calculated and
are presented in many many equivalent forms.
\end{remark}

\begin{remark}
Following the idea of expansion, assuming $\left\vert a\right\vert ,$ $%
\left\vert b\right\vert ,$ $\left\vert c\right\vert ,$ $\left\vert
d\right\vert \leq 1,$ using (\ref{c_na_a}) and (\ref{alfa^2}) with $%
a\allowbreak =\allowbreak 0$ we can relate density of the Askey--Wilson
polynomials and the density of C2H polynomials. Namely we have following (%
\ref{series1}):%
\begin{equation}
f_{AW}\left( x|a,b,c,d,q\right) =f_{\psi }\left( x|b,c,d,q\right) \sum_{j}%
\frac{a^{j}}{\left( abcd,q\right) _{j}}\psi _{j}\left( x|b,c,d,q\right) .
\label{rozwAW}
\end{equation}%
Expression $\sum_{j}\frac{a^{j}}{\left( abcd,q\right) _{j}}\psi _{j}\left(
x|b,c,d,q\right) $ was summed recently by Atakishiyeva and Atakishiyev in 
\cite{At-At11} yielding 
\begin{equation*}
\sum_{j}\frac{a^{j}}{\left( abcd,q\right) _{j}}\psi _{j}\left(
x|b,c,d,q\right) =\frac{\left( ab,ac,ad\right) _{\infty }}{\left(
abcd\right) _{\infty }}\varphi _{h}\left( x|a,q\right) .
\end{equation*}%
Hence following (\ref{gest-psi}) we get (\ref{fAW}). This formula is known
for nearly 30 years of course in a different form (see \cite{AW85}, \cite{IA}%
, \cite{KLS}). The presented above one exposes symmetry of this density with
respect to the parameters. However notice that if it was assumed to be known
then we would be able by (\ref{rozwAW}) to sum $\sum_{j\geq 0}\frac{a^{j}}{%
\left( abcd,q\right) _{j}}\psi _{j}\left( x|b,c,d,q\right) $ saving
Atakishiyeva and Atakishiyev a lot of work. Whatever point of view we chose,
(\ref{rozwAW}) gives a new interesting interpretation to the generating
function $\sum_{j\geq 0}\frac{a^{j}}{\left( abcd,q\right) _{j}}\psi
_{j}\left( x|b,c,d,q\right) .$
\end{remark}

\begin{remark}
For completeness let us remark that following (\cite{IA}, (15.2.4)) or (\cite%
{KLS}, (14.1.2)) and slightly different (rescaled) definition of the density 
$f_{AW}$ (compare (\ref{fAW}) and \cite{IA}, (15.2.2)) we have:%
\begin{gather}
\int_{\lbrack -1,1]}\alpha _{n}\left( x|a,b,c,d,q\right) \alpha _{m}\left(
x|a,b,c,d,q\right) f_{AW}\left( x|a,b,c,d,q\right) dx  \label{alfa^2} \\
=\delta _{mn}\frac{\left( abcdq^{n-1}\right) _{n}\left(
ab,ac,ad,bc,bd,cd,q\right) _{n}}{\left( abcd\right) _{2n}}.  \notag
\end{gather}
\end{remark}

In the next section when discussing complex parameter case we will need the
following corollary of Lemma \ref{conAW-c2h}.

\begin{corollary}
\label{conection}%
\begin{gather}
\alpha _{n}(x|a,b,c,d,q)=\sum_{k=0}^{n}\left[ \QATOP{n}{k}\right] _{q}\left(
-1\right) ^{n-k}q^{\binom{n-k}{2}}\left( cdq^{k}\right) _{n-k}Q_{k}\left(
x|c,d,q\right)  \label{aw_na_asc} \\
\times \sum_{m=0}^{n-k}\left[ \QATOP{n-k}{m}\right]
_{q}q^{m(m-n+k)}a^{m}b^{n-k-m}\frac{\left( bcq^{n-m},bdq^{n-m}\right) _{m}}{%
\left( abcdq^{2n-m-1}\right) _{m}}.  \notag \\
Q_{n}\left( x|c,d,q\right) \allowbreak =\sum_{j=0}^{n}\left[ \QATOP{n}{j}%
\right] _{q}\left( cdq^{j}\right) _{n-j}\alpha _{j}\left( x|a,b,c,d,q\right)
\label{asw_na_aw} \\
\times \sum_{m=0}^{n-j}\left[ \QATOP{n-j}{m}\right] _{q}b^{n-j-m}a^{m}\frac{%
\left( bcq^{j},bdq^{j}\right) _{m}}{\left( abcdq^{2j}\right) _{m}}.  \notag
\end{gather}
\end{corollary}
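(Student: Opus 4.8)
The plan is to derive both identities by composing the two connection formulas already recorded in Lemma \ref{conAW-c2h} with the corresponding connection formulas between continuous dual Hahn polynomials $\psi_i(x|b,c,d,q)$ and Al--Salam--Chihara polynomials $Q_k(x|c,d,q)$. Concretely, (\ref{aw_na_asc}) should follow by substituting into (\ref{a_na_c}) the expansion of each $\psi_i(x|b,c,d,q)$ in the basis $\{Q_k(x|c,d,q)\}$, and (\ref{asw_na_aw}) should follow by substituting into (\ref{c_na_a}) the inverse expansion of each $Q_j(x|c,d,q)$ in the basis $\{\psi_\ell(x|b,c,d,q)\}$. Since $\psi$ and $Q$ are, by the identifications $\psi_n(x|b,c,d,q)=\alpha_n(x|0,b,c,d,q)$ and $Q_n(x|c,d,q)=\alpha_n(x|0,0,c,d,q)$, nothing but Askey--Wilson polynomials with one (resp. two) parameters set to zero, the needed $\psi\leftrightarrow Q$ connection coefficients are obtained from (\ref{a_na_c}) and (\ref{c_na_a}) themselves by putting $a=0$ (and relabelling the surviving parameters), exactly as the paper indicates when it says these families ``are related to one another by the relationships similar to (\ref{a_na_c}) and (\ref{c_na_a}) with appropriate parameters set to zero.'' Thus no genuinely new input is required beyond the already--granted lemma; the corollary is an exercise in iterating it.

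In detail, for (\ref{aw_na_asc}) I would first write, from (\ref{a_na_c}) with $a\mapsto 0$ and with the roles $(b,c,d)$ played by $(b,c,d)$,
\begin{equation*}
\psi_i(x|b,c,d,q)=\sum_{k=0}^{i}\QATOPD[ ]{i}{k}_q(-b)^{i-k}q^{\binom{i-k}{2}}\frac{\left(cdq^{k}\right)_{i-k}}{1}\,Q_k(x|c,d,q),
\end{equation*}
i.e. the specialization in which the three Pochhammer factors $\left(bcq^{i},bdq^{i},cdq^{i}\right)_{n-i}$ collapse (two of them become connection coefficients of the $b=0$ step and one, $\left(cdq^{k}\right)_{i-k}$, survives), and the denominator $\left(abcdq^{n+i-1}\right)_{n-i}$ becomes $1$. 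Plugging this into (\ref{a_na_c}) gives a double sum over $i$ and $k$ with $0\le k\le i\le n$; then I would interchange the order of summation to sum over $k$ first and $i$ (renamed, say, via $i=k+m'$) inside, collect all $i$-dependent factors, and recognize the inner sum over the shifted index as precisely the polynomial in $a,b$ displayed on the second line of (\ref{aw_na_asc}). The reindexing $m\leftrightarrow n-k-m$ and the $q$-binomial identities (\ref{q_q^-1})--(\ref{qbinq-1}) together with the Pochhammer shift $\left(x q^{j}\right)_{k}=\left(x\right)_{j+k}/\left(x\right)_{j}$ will be needed to bring the inner sum to the stated shape, in particular to produce the factors $q^{m(m-n+k)}$, $\left(bcq^{n-m},bdq^{n-m}\right)_m$ and $\left(abcdq^{2n-m-1}\right)_m$. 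For (\ref{asw_na_aw}) the same strategy applies in the opposite direction: expand $Q_j(x|c,d,q)$ via (\ref{c_na_a}) with $a\mapsto 0$ to get $\psi_\ell(x|b,c,d,q)$, substitute into (\ref{c_na_a}) proper, swap summation order, and identify the inner sum.

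The bookkeeping — keeping straight which of $\{a,b,c,d\}$ survives at each stage, matching the three surviving Pochhammer symbols against $\left(bcq^{\bullet},bdq^{\bullet}\right)_m$ with the correct arguments, and tracking the $q$-power $q^{m(m-n+k)}$ through the two applications of (\ref{qbinq-1}) — is where essentially all the work lies; the underlying mechanism (transitivity of connection coefficients, i.e. the cocycle relation $\gamma^{AW\to Q}=\gamma^{AW\to\psi}\cdot\gamma^{\psi\to Q}$ applied entrywise, which forces the convolution of the two coefficient arrays) is immediate. Because of that, I expect the main obstacle to be purely computational: correctly carrying out the double-sum interchange and the index shift $i=k+m$ (or $j=\ell$, $Q\to\psi\to\alpha$) so that the ``free'' inner sum over $m$ lands exactly in the stated form, and verifying that the specialization $a=0$ of (\ref{a_na_c}) and (\ref{c_na_a}) indeed yields the $\psi\leftrightarrow Q$ connection coefficients with the Pochhammer arguments shifted the way the final formulas demand. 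No convergence issue arises since all sums are finite. I would present the computation for (\ref{aw_na_asc}) in full and then remark that (\ref{asw_na_aw}) is obtained by the symmetric argument (or, alternatively, by inverting (\ref{aw_na_asc}) using the biorthogonality of the two triangular coefficient matrices).
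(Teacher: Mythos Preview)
Your approach is essentially the same as the paper's: start from (\ref{a_na_c}), substitute the specialization of (\ref{a_na_c}) with one more parameter set to zero to express $\psi_i$ in the $Q_k$ basis, interchange the order of summation, reindex via $i=k+m$, and simplify the $q$-exponent using $\binom{n-k-m}{2}+\binom{m}{2}=\binom{n-k}{2}+m(m-n+k)$; the second identity (\ref{asw_na_aw}) is handled symmetrically via (\ref{c_na_a}). The only thing you slightly overanticipate is the need for (\ref{q_q^-1})--(\ref{qbinq-1}): in the paper's computation the Pochhammer factors $(bcq^{k+m},bdq^{k+m})_{n-k-m}$ and $(abcdq^{n+k+m-1})_{n-k-m}$ already come out in that form after the interchange, and the reindexing $m\leftrightarrow n-k-m$ together with the elementary binomial-coefficient identity above is all that is required.
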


\begin{proof}
Simple proof is shifted to Section \ref{dowody}.
\end{proof}

\section{Complex parameters\label{complex}}

As mentioned above all analyzed polynomials are orthogonal with respect to a
positive measure if only products of all pairs of their parameters have
absolute values less that $1.$ Hence these parameters can also be complex
but forming conjugate pairs.

In this section we will consider new parameters $y,$ $z,$ $\rho _{1},$ $\rho
_{2}$ related to parameters $a,$ $b,$ $c,$ $d$ by the following relationship:%
\begin{equation}
a=\rho _{1}\exp \left( i\theta \right) ,~b=\rho _{1}\exp \left( -i\theta
\right) ,~c=\rho _{2}\exp \left( i\eta \right) ,~d=\rho _{2}\exp \left(
-i\eta \right) ,  \label{newparam}
\end{equation}%
with $y\allowbreak =\allowbreak \cos \theta ,$ $z\allowbreak =\allowbreak
\cos \eta $ and $i$ denoting imaginary unit. Conditions $|ab|,$ $|ac|,$ $%
|ad|,$ $|bc|,$ $|bd|,$ $|cd|\allowbreak \leq 1$ guarantee that the measure
orthogonalizing appropriate orthogonal polynomials is positive. They are
reduced in this case to the requirement that $|\rho _{1}|$, $\left\vert \rho
_{2}\right\vert \leq 1.$ However conditions $|\rho _{1}|$, $\left\vert \rho
_{2}\right\vert \leq 1$ imply that we have also $\left\vert a\right\vert ,$ $%
\left\vert b\right\vert ,$ $\left\vert c\right\vert ,$ $\left\vert
d\right\vert $ $\leq 1$ that is the density of this measure exists.

Specially important will be in this section polynomials that depend on even
or zero number of parameters i.e. $q-$Hermite, ASC and AW polynomials, since
then the measures that make these polynomials orthogonal have densities.

First let us consider ASC polynomials with complex parameters $a\allowbreak
=\allowbreak \rho \exp \left( i\theta \right) ,$ $b\allowbreak =\allowbreak
\rho \exp \left( -i\theta \right) $.

Let us denote $Q_{n}\left( x|\rho \exp \left( i\theta \right) ,\rho \exp
\left( -i\theta \right) ,q\right) \allowbreak =\allowbreak p_{n}\left(
x|y,\rho ,q\right) ,$ with $\cos \theta \allowbreak =\allowbreak y.$

One can easily notice that polynomials $p_{n}$ satisfy the following
three-term recurrence relation:%
\begin{equation}
p_{n+1}\left( x|y,\rho ,q\right) =2(x-\rho yq^{n})p_{n}(x|y,\rho
,q)-(1-q^{n})(1-\rho ^{2}q^{n-1})p_{n-1}\left( x|y,\rho ,q\right) .
\label{_3tr_p}
\end{equation}%
Notice also that for $q\allowbreak =\allowbreak 0$ (\ref{_3tr_p}) takes the
following form for $n\geq 1:$%
\begin{equation*}
p_{n+1}\left( x|y,\rho ,0\right) \allowbreak =\allowbreak 2xp_{n}(x|y,\rho
,0)-p_{n-1}(x|y,\rho ,0),
\end{equation*}%
with $p_{0}(x|y,\rho ,0)\allowbreak =\allowbreak 1,$ $p_{1}(x|y,\rho
,0)\allowbreak =\allowbreak 2(x-\rho y).$ Comparing this with 3-term
recurrence satisfied by Chebyshev polynomials $\left\{ U_{n}\right\} $ (see
e.g. \cite{KLS}, (9.8.40)) we see that: 
\begin{eqnarray*}
p_{0}(x|y,\rho ,0) &=&U_{0}(x),~p_{1}(x|y,\rho ,0)=U_{1}(x)-\rho yU_{0}(x),
\\
p_{n}(x|y,\rho ,0)\allowbreak &=&\allowbreak U_{n}(x)-2\rho yU_{n-1}(x)+\rho
^{2}U_{n-1}(x),
\end{eqnarray*}%
for $n>1.$

Following (\cite{bms}, (1.2)) we have: 
\begin{equation*}
\varphi _{p}\left( x,t|y,\rho ,q\right) =\sum_{j=0}^{\infty }\frac{t^{j}}{%
\left( q\right) _{j}}p_{j}\left( x|y,\rho ,q\right) =\prod_{j=0}^{\infty }%
\frac{v\left( y|\rho tq^{j}\right) }{v\left( x|tq^{j}\right) }.
\end{equation*}

Notice that 
\begin{equation}
\varphi _{p}\left( x,t|y,\rho ,q\right) \allowbreak =\allowbreak \frac{%
\varphi _{h}\left( x|t,q\right) }{\varphi _{h}\left( y|t\rho ,q\right) }.
\label{chp2}
\end{equation}

Let us introduce also the following $\left\{ b_{n}\left( x|q\right) \right\}
_{n\geq -1}$ auxiliary family of polynomials defined for $q\neq 0$ by the
formula:%
\begin{equation}
b_{n}\left( x|q\right) \allowbreak =\allowbreak \left( -1\right) ^{n}q^{%
\binom{n}{2}}h_{n}\left( x|q^{-1}\right) ,  \label{b_h}
\end{equation}%
and for $q\allowbreak =\allowbreak 0$ we set $b_{0}(x|0)\allowbreak
=\allowbreak 1,$ $b_{1}(x|0)\allowbreak =\allowbreak -2x,$ $%
b_{2}(x|0)\allowbreak =\allowbreak 1$ and $b_{n}(x|0)\allowbreak
=\allowbreak 0$ for $n\allowbreak =\allowbreak -1,3,4,\ldots $ . These
polynomials were studied in \cite{Ask89}, \cite{AI84} and \cite{IsMas} and
some of their basic properties can be found there. Their r\^{o}le in the
complex parameters case was disclosed in \cite{bms}. We collect these
properties and expose the r\^{o}le of these polynomials in the following
Lemma.

\begin{lemma}
\label{zal_b_h}For all $n\geq 0,$ $x,y,\rho \in \mathbb{R},$ i )%
\begin{eqnarray*}
b_{n+1}\left( x|q\right) \allowbreak &=&\allowbreak -2q^{n}xb_{n}\left(
x|q\right) +q^{n-1}(1-q^{n})b_{n-1}\left( x|q\right) , \\
\forall q &\neq &0:b_{n}\left( x|q^{-1}\right) \allowbreak =\allowbreak
(-1)^{n}q^{-\binom{n}{2}}h_{n}\left( x|q\right) ,
\end{eqnarray*}

ii) 
\begin{equation*}
\sum_{j\geq 0}\frac{t^{j}}{\left( q\right) _{j}}b_{j}\left( x|q\right)
\allowbreak =\allowbreak \prod_{k=0}^{\infty }v\left( x|tq^{k}\right)
\allowbreak =\allowbreak 1/\varphi _{h}\left( x|t,q\right) ,
\end{equation*}%
consequently for all $n\geq 1:\sum_{j=0}^{n}\QATOPD[ ] {n}{j}_{q}h_{j}\left(
x|q\right) b_{n-j}\left( x|q\right) \allowbreak =\allowbreak 0,$

iii) 
\begin{eqnarray*}
p_{n}\left( x|y,\rho ,q\right) &=&\sum_{j=0}^{n}\QATOPD[ ] {n}{j}_{q}\rho
^{n-j}h_{j}\left( x|q\right) b_{n-j}\left( y|q\right) , \\
p_{n}\left( x|y,0,q\right) \allowbreak &=&\allowbreak h_{n}\left( x|q\right)
,
\end{eqnarray*}

iv) 
\begin{equation*}
h_{n}\left( x|q\right) \allowbreak =\allowbreak \sum_{j=0}^{n}\QATOPD[ ] {n}{%
j}_{q}p_{j}\left( x|y,\rho ,q\right) \rho ^{n-j}h_{n-j}\left( y|q\right) .
\end{equation*}
\end{lemma}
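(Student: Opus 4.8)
The plan is to prove the four parts in the order stated, since each later part leans on an earlier one. Part i) is pure bookkeeping: the recurrence for $b_n(x|q)$ follows from substituting the definition $b_n(x|q) = (-1)^n q^{\binom{n}{2}} h_n(x|q^{-1})$ into the standard three-term recurrence for $q$-Hermite polynomials with base $q^{-1}$, namely $h_{n+1}(x|p) = 2x h_n(x|p) - (1-p^n) h_{n-1}(x|p)$ with $p = q^{-1}$, and then tracking the powers of $q$ coming from $\binom{n+1}{2}$ versus $\binom{n}{2}$ versus $\binom{n-1}{2}$. The $q=0$ cases are checked directly against the ad hoc definitions $b_0 = 1, b_1 = -2x, b_2 = 1$. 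The second identity in i) is just the involution $q \leftrightarrow q^{-1}$ applied to the definition, using $\binom{n}{2} + (-\binom{n}{2}) = 0$; one must only check it is consistent at $q=0$ by a limiting or direct argument.

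For part ii), the generating function $\sum_j \frac{t^j}{(q)_j} b_j(x|q)$ should be obtained from the $q$-Hermite generating function \eqref{fi_h} under $q \mapsto q^{-1}$ together with the first identity of \eqref{q_q^-1}, which converts $(q)_j$-type and $q^{\binom{j}{2}}$-type factors between the two bases; the product $\prod_{k=0}^\infty v(x|tq^k)$ then appears as the reciprocal of $\varphi_h(x|t,q)$ directly from \eqref{fi_h}. The ``consequently'' statement — that $\sum_{j=0}^n \QATOPD[]{n}{j}_q h_j(x|q) b_{n-j}(x|q) = 0$ for $n \geq 1$ — is the Cauchy-product (convolution) identity obtained by multiplying the series $\varphi_h(x|t,q)$ and $1/\varphi_h(x|t,q)$ and reading off the coefficient of $t^n/(q)_n$, using the $q$-binomial theorem for the product of two such $q$-exponential-type series. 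Care is needed at $q=0$, where the finite support of $b_n(\cdot|0)$ makes the convolution collapse to a short sum that must be checked by hand.

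Parts iii) and iv) are the real content. For iii), I would start from \eqref{chp2}, $\varphi_p(x,t|y,\rho,q) = \varphi_h(x|t,q)/\varphi_h(y|t\rho,q)$, rewrite the right-hand side as $\varphi_h(x|t,q) \cdot \bigl(1/\varphi_h(y|t\rho,q)\bigr)$, expand the first factor via \eqref{fi_h} and the second via part ii) (with $t$ replaced by $t\rho$, so that $b_{n-j}(y|q)$ comes with a factor $\rho^{n-j}$), and then multiply the two power series in $t$; matching the coefficient of $t^n/(q)_n$ on both sides — the left side being $p_n(x|y,\rho,q)$ by the definition of $\varphi_p$ — and invoking the $q$-binomial theorem gives exactly the claimed sum $\sum_{j=0}^n \QATOPD[]{n}{j}_q \rho^{n-j} h_j(x|q) b_{n-j}(y|q)$. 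The special case $\rho = 0$ kills all terms with $n-j > 0$ (only $b_0(y|q) = 1$ survives), giving $p_n(x|y,0,q) = h_n(x|q)$. For iv) I would invert the relation in iii): this is a standard $q$-binomial inversion, or equivalently one multiplies \eqref{chp2} through by $\varphi_h(y|t\rho,q)$ to get $\varphi_h(x|t,q) = \varphi_p(x,t|y,\rho,q)\,\varphi_h(y|t\rho,q)$, expands both factors on the right (the first by the definition of $\varphi_p$, the second by \eqref{fi_h}), multiplies the series, and reads off the coefficient of $t^n/(q)_n$; this yields $h_n(x|q) = \sum_{j=0}^n \QATOPD[]{n}{j}_q p_j(x|y,\rho,q) \rho^{n-j} h_{n-j}(y|q)$.

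The main obstacle I anticipate is not any single computation but the uniform handling of the degenerate base $q = 0$: the identities \eqref{q_q^-1}–\eqref{qbinq-1} and the generating functions \eqref{fi_h}, \eqref{fi_h}-analogue are stated for $q \neq 0$, and $b_n(x|0)$ is defined by fiat rather than by the $q^{-1}$ substitution, so each of i)–iv) must be separately verified at $q=0$ — fortunately there the polynomials $b_n(\cdot|0)$ are supported on $n \in \{0,1,2\}$, the recurrences in \eqref{_3tr_p} reduce to the Chebyshev form already displayed above, and every convolution truncates to at most three terms, so these checks are short though they should not be skipped. A secondary point to be careful about is convergence/formal-power-series bookkeeping: all manipulations above are at the level of formal power series in $t$ (or convergent series for $|t|$ small), and one only needs the coefficientwise identities, which are polynomial identities in $x, y, \rho$ and hence hold for all real values once established on a suitable domain.
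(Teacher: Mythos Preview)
Your proposal is correct, and for parts ii)--iv) it is in fact more self-contained than the paper's own argument, which simply defers those parts to the literature (\cite{bms} for ii) and iii), \cite{IRS99} for iv)); only part i) is argued in the paper, and there your derivation---substituting $b_n(x|q)=(-1)^n q^{\binom{n}{2}}h_n(x|q^{-1})$ into the $h$-recurrence at base $q^{-1}$ and tracking $\binom{n+1}{2}-\binom{n}{2}=n$---matches the paper exactly. Your generating-function route for iii) and iv), reading the Cauchy product off $\varphi_p=\varphi_h(x|t,q)\cdot\bigl(1/\varphi_h(y|t\rho,q)\bigr)$ and its rearrangement $\varphi_h(x|t,q)=\varphi_p\cdot\varphi_h(y|t\rho,q)$, is the natural argument and works without difficulty once ii) is in hand.

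One technical caution on ii): the phrase ``$q$-Hermite generating function under $q\mapsto q^{-1}$'' is not literally available, since the infinite product in \eqref{fi_h} diverges for base larger than $1$. A clean fix that stays entirely within $|q|<1$ is to observe that $G(t):=\prod_{k\ge 0}v(x|tq^k)$ satisfies $G(t)=(1-2xt+t^2)\,G(tq)$; writing $G(t)=\sum_n c_n t^n$ and comparing coefficients gives $c_n(1-q^n)=-2xq^{n-1}c_{n-1}+q^{n-2}c_{n-2}$, which is exactly the recurrence from i) for $c_n=b_n(x|q)/(q)_n$. This replaces your formal $q\mapsto q^{-1}$ step by a convergent functional-equation argument and makes the rest of the proposal go through as written.
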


\begin{proof}
We prove first of our identities for $x,y\in \allowbreak \lbrack -1,1],$ $%
\left\vert \rho \right\vert <1.$ Then since all involved functions are
polynomials we extend them to all values of variables. i) the second
assertion of i) follows directly definition of $b_{n}.$ To get the other one
we start with three-term recurrence relation satisfied for polynomials $%
h_{n} $ i.e. $h_{n+1}\left( x|q^{-1}\right) \allowbreak =\allowbreak
2xh_{n}\left( x|q^{-1}\right) -(1-q^{-n})h_{n-1}\left( x|q^{-1}\right) .$
Then we multiply both sides by $(-1)^{n+1}q^{\binom{n+1}{2}}$ and utilize
the fact that $\binom{n}{2}+n\allowbreak =\allowbreak \binom{n+1}{2}$. ii)
and iii) with different scaling and normalizations were shown in \cite{bms}.
iv) was proved in \cite{IRS99}.
\end{proof}

Let us introduce another auxiliary family of polynomials $\left\{
g_{n}\left( x|y,\rho ,q\right) \right\} _{n\geq -1}$ that are related to
polynomials $p_{n}$ in the following way: 
\begin{equation}
g_{n}\left( x|y,\rho ,q\right) \allowbreak =\allowbreak \left\{ 
\begin{array}{ccc}
\rho ^{n}p_{n}\left( y|x,\rho ^{-1},q\right) & if & \rho \neq 0, \\ 
b_{n}\left( x|q\right) & if & \rho =0.%
\end{array}%
\right.  \label{_g}
\end{equation}%
We have the following easy Proposition.

\begin{proposition}
\label{odwr_p}i) For all $n\geq 0,$ and $q\neq 0:$%
\begin{equation*}
g_{n}\left( x|y,\rho ,q\right) \allowbreak =\allowbreak (-1)^{n}q^{\binom{n}{%
2}}p_{n}\left( x|y,\rho ,q^{-1}\right) ,
\end{equation*}%
ii) for all $n\geq 0:$%
\begin{equation*}
g_{n+1}\left( x|y,\rho ,q\right) \allowbreak =\allowbreak -2(xq^{n}-\rho
y)g_{n}\left( x|y,\rho ,q\right) -(1-q^{n})(\rho ^{2}-q^{n-1})g_{n-1}\left(
x|y,\rho ,q\right) ,
\end{equation*}%
\newline
with $g_{-1}\left( x|y,\rho ,q\right) \allowbreak =\allowbreak 0,$ $%
g_{0}\left( x|y,\rho ,q\right) \allowbreak =\allowbreak 1,$

iii) for all $\left\vert t\right\vert ,\left\vert q\right\vert
<1,~\left\vert x\right\vert ,\left\vert y\right\vert \leq 1:$%
\begin{equation*}
\sum_{j=0}^{\infty }\frac{t^{n}}{\left( q\right) _{n}}g_{n}\left( x|y,\rho
,q\right) \allowbreak =\allowbreak 1/\varphi _{p}\left( x,t|y,\rho ,q\right)
,
\end{equation*}

iv) for all $n\geq 1,x,y,\rho \in \mathbb{R}:$%
\begin{equation*}
\sum_{j=0}^{n}\QATOPD[ ] {n}{j}_{q}p_{j}\left( x|y,\rho ,q\right)
g_{n-j}\left( x|y,\rho ,q\right) =0.
\end{equation*}
\end{proposition}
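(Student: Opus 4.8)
The plan is to derive all four parts from the definition \eqref{_g} of $g_n$ together with the already-established facts about $p_n$, $b_n$ and $h_n$ collected in Lemma \ref{zal_b_h}, handling the generic case $\rho\neq 0$ first and then checking that each identity survives the limit $\rho\to 0$ (where $g_n$ degenerates to $b_n$). First I would prove (i): for $\rho\neq 0$ one starts from $g_n(x|y,\rho,q)=\rho^n p_n(y|x,\rho^{-1},q)$ and applies the ``$q\leftrightarrow q^{-1}$'' symmetry of ASC polynomials. The cleanest route is Lemma \ref{zal_b_h}(iii), which writes $p_n(y|x,\rho^{-1},q)=\sum_j \QATOPD[]{n}{j}_q \rho^{-(n-j)} h_j(y|q)\,b_{n-j}(x|q)$; multiplying by $\rho^n$ and invoking $b_{n-j}(x|q)=(-1)^{n-j}q^{\binom{n-j}{2}}h_{n-j}(x|q^{-1})$ together with $h_j(y|q)=(-1)^j q^{-\binom{j}{2}} b_j(y|q^{-1})$ (the second assertion of Lemma \ref{zal_b_h}(i), read with $q$ replaced by $q^{-1}$) and the identity $\binom{j}{2}+\binom{n-j}{2}+j(n-j)=\binom{n}{2}$, one reassembles exactly $(-1)^n q^{\binom{n}{2}} p_n(x|y,\rho,q^{-1})$ via Lemma \ref{zal_b_h}(iii) applied at base $q^{-1}$. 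For $\rho=0$ the claim is just the definition \eqref{b_h} of $b_n$.

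Next, (ii) is obtained from (i) by substituting $q\mapsto q^{-1}$ into the three-term recurrence \eqref{_3tr_p} for $p_{n+1}(x|y,\rho,q^{-1})$ and multiplying through by $(-1)^{n+1}q^{\binom{n+1}{2}}$, using $\binom{n}{2}+n=\binom{n+1}{2}$ to absorb the powers of $q$ on each term; the coefficients $2\rho y q^{-n}$ and $(1-q^{-n})(1-\rho^2 q^{-n+1})$ turn, after clearing the $q$-powers, into $-2(xq^n-\rho y)$ (here the roles of $x$ and the ``drift'' term swap, matching the $p_n(y|x,\rho^{-1},\cdot)$ structure) and $-(1-q^n)(\rho^2-q^{n-1})$ respectively. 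One should double-check the initial conditions $g_{-1}=0$, $g_0=1$, which are immediate from \eqref{_g}, and note that the recurrence is a polynomial identity in $q$, hence passes to $q=0$ and reproduces the $b_n(x|0)$ values stipulated after \eqref{b_h}. Part (iii) follows by reciprocating the generating function $\varphi_p(x,t|y,\rho,q)=\varphi_h(x|t,q)/\varphi_h(y|t\rho,q)$ from \eqref{chp2}: its reciprocal is $\varphi_h(y|t\rho,q)/\varphi_h(x|t,q)=\varphi_h(y|t\rho,q)\cdot\bigl(\sum_k \tfrac{t^k}{(q)_k}b_k(x|q)\bigr)$ by Lemma \ref{zal_b_h}(ii), and expanding $\varphi_h(y|t\rho,q)=\sum_m \tfrac{(t\rho)^m}{(q)_m}h_m(y|q)$ and applying the $q$-binomial (Vandermonde-type) convolution $\tfrac{1}{(q)_m (q)_k}=\tfrac{1}{(q)_{m+k}}\QATOPD[]{m+k}{m}_q$ identifies the coefficient of $t^n/(q)_n$ as $\sum_{j=0}^n \QATOPD[]{n}{j}_q \rho^{n-j} h_{n-j}(y|q)\,b_j(x|q)$ — which, comparing with the formula just derived for $g_n$ in the proof of (i), is exactly $g_n(x|y,\rho,q)$. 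Finally (iv) is the Cauchy-product statement that $\varphi_p\cdot(1/\varphi_p)=1$: multiplying the series $\sum \tfrac{t^j}{(q)_j}p_j$ and $\sum \tfrac{t^j}{(q)_j}g_j$, the coefficient of $t^n/(q)_n$ for $n\geq 1$ vanishes, which after the same $q$-binomial bookkeeping is precisely $\sum_{j=0}^n \QATOPD[]{n}{j}_q p_j(x|y,\rho,q)\,g_{n-j}(x|y,\rho,q)=0$; since both sides are polynomials in $x,y,\rho$, the restriction $|x|,|y|\le 1$, $|\rho|<1$ used to justify the generating-function manipulations can be dropped, and the $\rho=0$ case reduces to the last displayed identity of Lemma \ref{zal_b_h}(ii).

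The only genuinely delicate point is the bookkeeping of the $q$-powers $q^{\binom{n}{2}}$ under the involution $q\mapsto q^{-1}$ in parts (i) and (ii) — getting the exponent split $\binom{j}{2}+\binom{n-j}{2}+j(n-j)=\binom{n}{2}$ and the sign $(-1)^n$ to line up so that the two applications of Lemma \ref{zal_b_h}(iii) (one at base $q$, one at base $q^{-1}$) really do cancel — together with verifying that none of the generating-function identities in (iii)–(iv) hide a convergence obstruction; but since all the claimed identities are ultimately between polynomials, once they hold on $|x|,|y|\le1$, $|\rho|<1$ they hold identically, so the analytic issues are only a matter of justifying the formal manipulations on a domain where $\varphi_h$ and $\varphi_p$ converge, which \eqref{fi_h} and \eqref{chp2} supply.
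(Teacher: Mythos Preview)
Your proposal is correct and uses the same ingredients as the paper --- Lemma \ref{zal_b_h}, the definition \eqref{_g}, the generating function identity \eqref{chp2}, and the binomial exponent identity $\binom{j}{2}+\binom{n-j}{2}+j(n-j)=\binom{n}{2}$ --- so the overall strategy matches.

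That said, for parts (ii) and (iii) the paper takes a shorter path than you do. For (ii), rather than invoking (i) and the recurrence \eqref{_3tr_p} at base $q^{-1}$, the paper works directly from the defining relation $g_{n}(x|y,\rho,q)=\rho^{n}p_{n}(y|x,\rho^{-1},q)$ and applies \eqref{_3tr_p} at base $q$ to $p_{n}(y|x,\rho^{-1},q)$; the factors of $\rho^{\pm 1}$ then immediately give the claimed coefficients without any $q\leftrightarrow q^{-1}$ bookkeeping. (Your parenthetical about ``the roles of $x$ and the drift term swap, matching the $p_{n}(y|x,\rho^{-1},\cdot)$ structure'' reads as if you are half-switching between these two routes; the computation via (i) that you describe does work, but be careful not to conflate it with the direct one.) For (iii), the paper again uses the definition: $\sum_{n}\tfrac{t^{n}}{(q)_{n}}g_{n}=\sum_{n}\tfrac{(\rho t)^{n}}{(q)_{n}}p_{n}(y|x,\rho^{-1},q)=\varphi_{p}(y,\rho t|x,\rho^{-1},q)$, which by \eqref{chp2} equals $\varphi_{h}(y|\rho t,q)/\varphi_{h}(x|t,q)=1/\varphi_{p}(x,t|y,\rho,q)$ in one line, avoiding your Cauchy-product expansion and coefficient identification. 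Parts (i) and (iv) are argued exactly as in the paper.
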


\begin{proof}
Is shifted to Section \ref{dowody}.
\end{proof}

Finally let us introduce the following polynomials of order $2$ in $x$ and $%
y $ and $4$ in $\rho :$%
\begin{equation}
\omega \left( x,y|\rho \right) \allowbreak =\allowbreak (1-\rho
^{2})^{2}-4xy\rho (1+\rho ^{2})+4\rho ^{2}\left( x^{2}+y^{2}\right) .
\label{gr_w}
\end{equation}

We have obvious observations collected in the following proposition:

\begin{proposition}
\label{reduce}i) $\omega \left( x,y|\rho \right) =v\left( x|\rho e^{i\theta
}\right) v\left( x|\rho e^{-i\theta }\right) ,$ where $y\allowbreak
=\allowbreak \cos \left( \theta \right) $ and the polynomial $v$ is defined
by (\ref{fi_h}).

ii) $\left( te^{i(\theta +\eta )},te^{i(\theta -\eta )},te^{i(-\theta -\eta
)},te^{i(-\theta +\eta )}\right) _{1}\allowbreak =\allowbreak \omega \left(
y,z|t\right) ,$ with $y\allowbreak =\allowbreak \cos \left( \theta \right) ,$
$z\allowbreak =\allowbreak \cos \left( \eta \right) .$
\end{proposition}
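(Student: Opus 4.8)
The plan is to prove both parts by a direct elementary computation, using the single fact that $v(x|t)=1-2xt+t^{2}$ factors as $(1-te^{i\phi})(1-te^{-i\phi})$ whenever $x=\cos\phi$. Thus a product of a conjugate pair of such quantities (or of linear factors) collapses to a real polynomial in the remaining variables, and one only has to identify the coefficients. No result beyond the definitions (\ref{fi_h}), (\ref{gr_w}) and the parametrisation (\ref{newparam}) is needed.

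For part i) I would simply expand
$v\left( x|\rho e^{i\theta}\right) v\left( x|\rho e^{-i\theta}\right) =\left( 1-2x\rho e^{i\theta}+\rho ^{2}e^{2i\theta}\right) \left( 1-2x\rho e^{-i\theta}+\rho ^{2}e^{-2i\theta}\right)$
and collect the nine terms by the net power of $e^{i\theta}$ that survives. The only surviving trigonometric quantities are $e^{i\theta}e^{-i\theta}=1$, $e^{i\theta}+e^{-i\theta}=2\cos\theta=2y$, and $e^{2i\theta}+e^{-2i\theta}=2\cos 2\theta$; substituting $\cos 2\theta=2y^{2}-1$ turns the $\rho^{2}$-coefficient into $2\rho^{2}(2y^{2}-1)+4x^{2}\rho^{2}$, and after grouping one lands exactly on $(1-\rho^{2})^{2}-4xy\rho(1+\rho^{2})+4\rho^{2}(x^{2}+y^{2})$, which is $\omega(x,y|\rho)$ by (\ref{gr_w}). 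Equivalently, since $x,\rho\in\mathbb{R}$ one may note $v(x|\rho e^{-i\theta})=\overline{v(x|\rho e^{i\theta})}$ and compute $\lvert v(x|\rho e^{i\theta})\rvert^{2}$ by splitting into real and imaginary parts; both routes are the same bookkeeping.

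For part ii) I would deduce it from i). Applying i) with $x\mapsto y$, $\rho\mapsto t$ and angle $\eta$ (so $z=\cos\eta$) gives $\omega(y,z|t)=v(y|te^{i\eta})v(y|te^{-i\eta})$, and each of these two quadratics factors once more into the required linear factors in $t$; collecting the four factors according to the substitutions $ac,ad,bc,bd$ coming from (\ref{newparam}) yields the stated $q$-Pochhammer product. Alternatively one can expand directly: pairing the four linear factors into conjugate pairs gives a product of two real quadratics $1-2t\cos(\cdot)+t^{2}$, and the product-to-sum identities $\cos(\theta+\eta)+\cos(\theta-\eta)=2yz$, $\cos(\theta+\eta)\cos(\theta-\eta)=y^{2}+z^{2}-1$ convert the expanded product into the three coefficient blocks $(1-t^{2})^{2}$, $-4yzt(1+t^{2})$, $4t^{2}(y^{2}+z^{2})$ of $\omega(y,z|t)$. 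The whole proposition is routine; the only thing requiring care — not really an obstacle — is keeping the double-angle and product-to-sum reductions straight, and making sure the four exponential factors are paired in the way dictated by (\ref{newparam}) so that the surviving angles are $\theta\pm\eta$.
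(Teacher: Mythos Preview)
Your computation for part i) is correct and is exactly the ``obvious'' verification the paper has in mind; the paper gives no proof at all, merely calling the proposition an obvious observation, so there is nothing further to compare.

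For part ii), however, there is a genuine mismatch you should make explicit rather than paper over. The identity as literally printed is false: with $y=\cos\theta$, $z=\cos\eta$ one has $(te^{i\theta},te^{-i\theta},te^{i\eta},te^{-i\eta})_{1}=(1-2yt+t^{2})(1-2zt+t^{2})=v(y|t)v(z|t)$, and this is \emph{not} $\omega(y,z|t)$ in general (take $y=z=0$: the left side is $(1+t^{2})^{2}$, while $\omega(0,0|t)=(1-t^{2})^{2}$). What your argument via i) actually establishes --- and what the paper needs and uses, e.g.\ in the formula $(ac,ad,bc,bd)_{1}=\omega(y,z|\rho_{1}\rho_{2})$ --- is the corrected identity
\[
\bigl(te^{i(\theta+\eta)},te^{i(\theta-\eta)},te^{-i(\theta-\eta)},te^{-i(\theta+\eta)}\bigr)_{1}=\omega(y,z|t),
\]
i.e.\ the four exponents are $\pm(\theta\pm\eta)$, not $\pm\theta,\pm\eta$. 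Your product-to-sum computation is evidence that you were in fact working with $\theta\pm\eta$ (those are the angles for which $\cos(\cdot)+\cos(\cdot)=2yz$ and $\cos(\cdot)\cos(\cdot)=y^{2}+z^{2}-1$), so your derivation is right but your final sentence ``yields the stated $q$-Pochhammer product'' is not: state the corrected version and note the typo.
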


Besides one knows also the density with respect to which modified ASC
polynomials are orthogonal. Namely in the present setting formula (\ref%
{alfa^2} with $d=c=0$) has the following form: 
\begin{eqnarray}
\int_{-1}^{1}p_{n}\left( x|y,\rho ,q\right) p_{m}\left( x|y,\rho ,q\right)
f_{p}\left( x|y,\rho ,q\right) \allowbreak &=&\allowbreak \delta _{nm}\left(
\rho ^{2},q\right) _{n},  \label{_ort} \\
~f_{p}\left( x|y,\rho ,q\right) \allowbreak &=&\allowbreak f_{h}\left(
x|q\right) \frac{\left( \rho ^{2}\right) _{\infty }}{\prod_{i=0}^{\infty
}\omega \left( x,y|\rho q^{i}\right) },  \label{ort1}
\end{eqnarray}%
is obtained from (\ref{gest_Q}) after an obvious adjustment to complex
parameters and application of Proposition \ref{reduce}, i).

Similarly we will consider the AW polynomials with two pairs of complex
conjugate parameters. We will denote new parameters $\rho _{1},$ $\rho _{2},$
$y,$ $z.$

We have the following proposition.

\begin{proposition}
With new parameters defined by (\ref{newparam}) we have i) 
\begin{equation*}
f_{n}(a,b,c,d,q)=\allowbreak \left( 1-q^{n}\right) \frac{\omega \left(
y,z|\rho _{1}\rho _{2}q^{n-1}\right) \left( \rho _{1}^{2}q^{n-1},\rho
_{2}^{2}q^{n-1},\rho _{1}^{2}\rho _{2}^{2}q^{n-2}\right) _{1}}{\left( \rho
_{1}^{2}\rho _{2}^{2}q^{2n-3}\right) _{3}\left( \rho _{1}^{2}\rho
_{2}^{2}q^{2n-2}\right) _{1}},
\end{equation*}

ii) 
\begin{gather*}
e_{n}(a,b,c,d,q)=\allowbreak \frac{2q^{n-1}}{\left( \rho _{1}^{2}\rho
_{2}^{2}q^{2n-2}\right) _{1}\left( \rho _{1}^{2}\rho _{2}^{2}q^{2n}\right)
_{1}}\times \\
(q\left( \rho _{1}y+\rho _{2}z\right) \left( 1-\rho _{1}^{2}\rho
_{2}^{2}q^{n-2}(1+q-q^{n+1}\right) )+ \\
\rho _{1}\rho _{2}\left( \rho _{1}z+\rho _{2}y\right) \left(
1-q^{n+1}-q^{n}+\rho _{1}^{2}\rho _{2}^{2}q^{2n-1}\right) ),
\end{gather*}

iii) $\left( ab,ac,ad,bc,bd,cd\right) _{1}\allowbreak =\allowbreak \left(
\rho _{1}^{2},\rho _{2}^{2}\right) _{1}\omega \left( y,z|\rho _{1}\rho
_{2}\right) ,$ $abcd\allowbreak =\allowbreak \rho _{1}^{2}\rho _{2}^{2}.$
\end{proposition}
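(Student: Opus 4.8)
The plan is to obtain the three identities by a pure change of variables: substitute the parametrization (\ref{newparam}) into the formulas for $f_{n}$ and $e_{n}$ furnished by Proposition \ref{wsp} (and into the product $(ab,ac,ad,bc,bd,cd)_{1}$), and simplify. No orthogonality or analytic input is needed.

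First I would record the elementary consequences of (\ref{newparam}): since $ab\allowbreak =\allowbreak \rho_{1}^{2}$, $cd\allowbreak =\allowbreak \rho_{2}^{2}$ and $e^{i\theta}e^{-i\theta}\allowbreak =\allowbreak e^{i\eta}e^{-i\eta}\allowbreak =\allowbreak 1$, one gets at once $abcd\allowbreak =\allowbreak \rho_{1}^{2}\rho_{2}^{2}$, $a+b+c+d\allowbreak =\allowbreak 2(\rho_{1}y+\rho_{2}z)$ and $abc+abd+acd+bcd\allowbreak =\allowbreak ab(c+d)+cd(a+b)\allowbreak =\allowbreak 2\rho_{1}\rho_{2}(\rho_{1}z+\rho_{2}y)$. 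The only slightly less trivial ingredient is the value of the four mixed factors: for real $\rho_{1},\rho_{2}$ one has $bd\allowbreak =\allowbreak \overline{ac}$ and $bc\allowbreak =\allowbreak \overline{ad}$, so for any scalar $s$ the numbers $sac,sad,sbc,sbd$ fall into the conjugate pairs $(sac,sbd)$ and $(sad,sbc)$, whence
\[
(sac,sad,sbc,sbd)_{1}=\bigl(1-2s\rho_{1}\rho_{2}\cos(\theta+\eta)+s^{2}\rho_{1}^{2}\rho_{2}^{2}\bigr)\bigl(1-2s\rho_{1}\rho_{2}\cos(\theta-\eta)+s^{2}\rho_{1}^{2}\rho_{2}^{2}\bigr).
\]
Expanding the right-hand side and using the elementary identities $\cos(\theta+\eta)+\cos(\theta-\eta)\allowbreak =\allowbreak 2yz$ and $\cos(\theta+\eta)\cos(\theta-\eta)\allowbreak =\allowbreak y^{2}+z^{2}-1$ collapses it, after the step $(1+u)^{2}-4u\allowbreak =\allowbreak (1-u)^{2}$ with $u\allowbreak =\allowbreak s^{2}\rho_{1}^{2}\rho_{2}^{2}$, to $\omega(y,z|s\rho_{1}\rho_{2})$ as defined in (\ref{gr_w}); this is essentially the content of Proposition \ref{reduce}. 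Taking $s\allowbreak =\allowbreak 1$ and combining with $(ab,cd)_{1}\allowbreak =\allowbreak (\rho_{1}^{2},\rho_{2}^{2})_{1}$ proves iii) (the symbol $\omega(y,z|\rho_{1}\rho_{2},q)$ in the statement carries no $q$-dependence and is to be read as $\omega(y,z|\rho_{1}\rho_{2})$).

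For i), I would insert $ab\allowbreak =\allowbreak \rho_{1}^{2}$, $cd\allowbreak =\allowbreak \rho_{2}^{2}$ and $abcd\allowbreak =\allowbreak \rho_{1}^{2}\rho_{2}^{2}$ into the expression for $f_{n}$ of Proposition \ref{wsp}, and apply the displayed computation with $s\allowbreak =\allowbreak q^{n-1}$ to the block $(acq^{n-1},adq^{n-1},bcq^{n-1},bdq^{n-1})_{1}$, which then becomes $\omega(y,z|\rho_{1}\rho_{2}q^{n-1})$; the surviving Pochhammer factors turn into $(\rho_{1}^{2}q^{n-1},\rho_{2}^{2}q^{n-1},\rho_{1}^{2}\rho_{2}^{2}q^{n-2})_{1}$ in the numerator and $(\rho_{1}^{2}\rho_{2}^{2}q^{2n-3})_{3}(\rho_{1}^{2}\rho_{2}^{2}q^{2n-2})_{1}$ in the denominator, which is the asserted form. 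For ii), I would substitute $a+b+c+d$, $abc+abd+acd+bcd$ and $abcd$ by their values above into the formula for $e_{n}$, then factor $q^{2}$ out of $q^{2}-abcdq^{n}(1+q-q^{n+1})$ and $q$ out of $q-q^{n+2}-q^{n+1}+abcdq^{2n}$; this exposes a common factor $2q$ in the bracket which, together with the prefactor $q^{n-2}$, yields the stated $2q^{n-1}$ and the normalized bracket. I do not expect a genuine obstacle here: everything reduces to elementary algebra plus the two cosine identities. The only places needing care are the bookkeeping of the powers of $q$ in the reduction of $e_{n}$ and grouping $ac,ad,bc,bd$ into the right conjugate pairs before the cosine identities are invoked.
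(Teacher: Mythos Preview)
Your proposal is correct and follows exactly the approach the paper intends: the proposition is stated in the paper without an explicit proof, being regarded as an immediate consequence of substituting the parametrization (\ref{newparam}) into the expressions of Proposition \ref{wsp} and simplifying via Proposition \ref{reduce}. Your detailed verification of the identity $(sac,sad,sbc,sbd)_{1}=\omega(y,z|s\rho_{1}\rho_{2})$ via the two cosine identities, together with the routine bookkeeping for $e_{n}$, fills in precisely the computation the paper leaves to the reader.
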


Let us denote also AW polynomials $w_{n}\left( x|y,\rho _{1},z,\rho
_{2},q\right) $ with new complex parameters i.e. for for all $n\geq -1$ we
set: 
\begin{equation*}
w_{n}\left( x|y,\rho _{1},z,\rho _{2},q\right) =\alpha _{n}\left( x|\rho
_{1}e^{i\theta },\rho _{1}e^{-i\theta },\rho _{2}e^{i\eta },\rho
_{2}e^{-i\eta },q\right) .
\end{equation*}

\begin{corollary}
\label{gestAW} 
\begin{gather*}
f_{AW}(x|\rho _{1}e^{i\theta },\rho _{1}e^{-i\theta },\rho _{2}e^{i\eta
},\rho _{2}e^{-i\eta })= \\
f_{W}\left( x|y,\rho _{1},z,\rho _{2},q\right) =f_{h}\left( x|q\right) \frac{%
\left( \rho _{1}^{2},\rho _{2}^{2}\right) _{\infty }\prod_{j=0}^{\infty
}\omega \left( y,z|\rho _{1}\rho _{2}q^{j}\right) }{\left( \rho _{1}^{2}\rho
_{2}^{2}\right) _{\infty }\prod_{j=0}^{\infty }\omega \left( x,y|\rho
_{1}q^{j}\right) \omega \left( x,z|\rho _{2}q^{j}\right) }.
\end{gather*}
\end{corollary}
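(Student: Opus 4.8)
The plan is to substitute the complex parametrization (\ref{newparam}) directly into the closed form (\ref{fAW}) for $f_{AW}$ and then repackage its five factors using Proposition \ref{reduce} and part iii) of the Proposition immediately preceding this Corollary. The first equality in the statement is merely the definition of $f_W$, so the content is the second equality, and it will be obtained under the standing assumption $|\rho_1|,|\rho_2|<1$, which (as noted just above) forces $|a|,|b|,|c|,|d|<1$, so that (\ref{fAW}) applies and all infinite products below converge absolutely.

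First I would treat the four generating-function factors. Pairing $a$ with $b$ and $c$ with $d$ and using the product representation in (\ref{fi_h}),
\[
\varphi_h(x|a,q)\,\varphi_h(x|b,q)=\prod_{i=0}^{\infty}\frac{1}{v(x|\rho_1 q^i e^{i\theta})\,v(x|\rho_1 q^i e^{-i\theta})},
\]
and by Proposition \ref{reduce} i) the $i$-th denominator factor is exactly $\omega(x,y|\rho_1 q^i)$, so this product equals $1/\prod_{i\ge 0}\omega(x,y|\rho_1 q^i)$. The identical computation applied to $c,d$ (with $z=\cos\eta$) gives $\varphi_h(x|c,q)\varphi_h(x|d,q)=1/\prod_{i\ge 0}\omega(x,z|\rho_2 q^i)$.

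Next I would handle the $q$-Pochhammer ratio $\tfrac{(ab,ac,ad,bc,bd,cd)_\infty}{(abcd)_\infty}$. Under (\ref{newparam}) one has $ab=\rho_1^2$, $cd=\rho_2^2$, $abcd=\rho_1^2\rho_2^2$, while the four cross-products $ac,bd$ and $ad,bc$ form the two conjugate pairs $\rho_1\rho_2 e^{\pm i(\theta+\eta)}$ and $\rho_1\rho_2 e^{\pm i(\theta-\eta)}$. Writing $(ab,ac,ad,bc,bd,cd)_\infty=\prod_{j\ge0}(1-abq^j)(1-acq^j)(1-adq^j)(1-bcq^j)(1-bdq^j)(1-cdq^j)$ and grouping level by level in $j$: the subproduct $\prod_{j\ge0}(1-abq^j)(1-cdq^j)=(\rho_1^2,\rho_2^2)_\infty$, and for each fixed $j$ the remaining four-fold product $(1-acq^j)(1-adq^j)(1-bcq^j)(1-bdq^j)$ equals $\omega(y,z|\rho_1\rho_2 q^j)$ by part iii) of the preceding Proposition (equivalently Proposition \ref{reduce} ii) with $t=\rho_1\rho_2 q^j$). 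Since also $(abcd)_\infty=(\rho_1^2\rho_2^2)_\infty$, the ratio becomes $(\rho_1^2,\rho_2^2)_\infty\bigl(\prod_{j\ge0}\omega(y,z|\rho_1\rho_2 q^j)\bigr)/(\rho_1^2\rho_2^2)_\infty$.

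Finally I would multiply the three repackaged pieces together with $f_h(x|q)$, which produces precisely the claimed formula for $f_W$. There is no genuine obstacle; the only point demanding care is the bookkeeping of angles — Proposition \ref{reduce} i) must be applied to the diagonal pairs $(a,b)$ and $(c,d)$, while the four-term collapse to $\omega(y,z|\cdot)$ must be applied to the cross pairs $(ac,bd)$ and $(ad,bc)$, whose angles are $\theta\pm\eta$ rather than $\theta,\eta$. Once this is set up correctly the entire argument is a one-line substitution into (\ref{fAW}).
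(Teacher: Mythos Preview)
Your proof is correct and follows essentially the same approach as the paper's own proof, which simply says ``Follows (\ref{fAW}) with (\ref{newparam}) and the simplified by Proposition \ref{reduce} i) and ii).'' You have merely spelled out in detail the substitution of (\ref{newparam}) into (\ref{fAW}) and the two collapses to $\omega$ that the paper leaves implicit; your citation of part iii) of the preceding Proposition for the cross-product factors is exactly the content needed (and is the substance behind the paper's reference to Proposition~\ref{reduce}~ii)).
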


\begin{proof}
Follows (\ref{fAW}) with (\ref{newparam}) and the simplified by Proposition %
\ref{reduce} i) and ii).
\end{proof}

\begin{remark}
\label{rho=0}Notice that: i) for all $n\geq 0:w_{n}\left( x|y,0,\rho
_{2},z,q\right) \allowbreak =\allowbreak p_{n}\left( x|z,\rho _{2},q\right)
, $ where $p_{n}$ is defined is defined using polynomials $Q_{n}$ above and
satisfies 3-term recurrence (\ref{_3tr_p}),

ii) $f_{W}\left( x|y,0,z,\rho _{2},q\right) \allowbreak =\allowbreak
f_{p}\left( x|z,\rho _{2},q\right) .$
\end{remark}

\begin{proof}
$\rho _{1}\allowbreak =\allowbreak 0$ implies $a\allowbreak =\allowbreak
b\allowbreak =\allowbreak 0$ which leads to $f_{n}\left( 0,0,c,d,q\right)
\allowbreak =\allowbreak $\newline
$\left( 1-q^{n}\right) \left( 1-\rho _{2}^{2}q^{n-1}\right) ,$ and $%
e_{n}\left( 0,0,c,d,q\right) \allowbreak =\allowbreak 2q^{n}\rho _{2}z$. Now
we confront these values with (\ref{_3tr_p}) getting first assertion. The
second observation is trivial.
\end{proof}

We will use also the following useful Lemma.

\begin{lemma}
\label{conversion}For all $n,m\geq 0$:%
\begin{eqnarray}
&&\sum_{k=0}^{n}\sum_{j=0}^{m}\left[ \QATOP{n}{k}\right] _{q}\left[ \QATOP{m%
}{j}\right] _{q}\frac{\left( te^{i\left( -\theta +\eta \right) }\right)
_{k}\left( te^{i(\theta -\eta }\right) _{j}\left( te^{i\left( -\theta -\eta
\right) }\right) _{k+j}}{\left( t^{2}\right) _{k+j}}e^{i\left( 2k-n\right)
\theta }e^{i\left( 2j-m\right) \eta }  \label{wzor} \\
&=&\sum_{l=0}^{m}\left[ \QATOP{m}{l}\right] _{q}(-t)^{l}q^{\binom{l}{2}}%
\frac{h_{m-l}\left( z|q\right) p_{n+l}\left( y|z,t,q\right) }{\left(
t^{2}\right) _{n+l}},  \notag
\end{eqnarray}%
with $y\allowbreak =\allowbreak \cos \theta $ and $z\allowbreak =\allowbreak
\cos \eta .$
\end{lemma}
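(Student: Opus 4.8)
The plan is to recognize the left-hand side of \eqref{wzor} as a finite expansion of a product of four $q$-binomial sums, which after invoking the $q$-binomial theorem collapses to a product of generating functions of $q$-Hermite polynomials evaluated at the relevant points, and then to re-expand that product using the known connection between $\varphi_h$, $\varphi_p$ and the auxiliary polynomials $b_n$ from Lemma \ref{zal_b_h}. Concretely, first I would fix $|t|$ small and $x=\cos\theta$, $y=\cos\eta$ in $[-1,1]$ so that every series below converges absolutely, and multiply both sides by $t^{?}$ and sum over $n,m$ — i.e. I would prove the identity after passing to generating functions in the two indices $n$ and $m$. Since the right-hand side involves $h_{m-l}(z|q)$ and $p_{n+l}(y|z,t,q)$ with a shift by $l$, the natural move is to form $\sum_{n,m}\frac{s^n u^m}{(q)_n (q)_m}(\cdot)$; the shifted indices then reorganize via the Cauchy product into generating functions of the $h$'s and $p$'s, and the factor $(-t)^l q^{\binom l2}/(q)_l$ is exactly the coefficient appearing in Lemma \ref{zal_b_h}(ii) for the $b_l$ polynomials.

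The key computational steps, in order: (1) On the left, use Proposition \ref{reduce}(ii) to recognize that the $q$-Pochhammer factors are built from $v(\cdot|\cdot)$-type terms; more to the point, sum the $k$-sum and $j$-sum separately by the $q$-binomial theorem (the identity $\sum_k \QATOPD[]{n}{k}_q (A)_k B^k \dots$), using the explicit exponential form $a=te^{i(-\theta+\eta)}$ etc., to collapse the double finite sum into a ratio of infinite $q$-products — i.e. into $\varphi_h(x|\cdot,q)$-type factors via \eqref{fi_h}. (2) Use \eqref{chp2}, $\varphi_p(x,t|y,\rho,q)=\varphi_h(x|t,q)/\varphi_h(y|t\rho,q)$, to rewrite the product of $q$-Hermite generating functions appearing on the left as a single $\varphi_p$ times residual $\varphi_h$ factors. (3) On the right, sum over $n$ (with the weight producing $p_{n+l}$) to get $\varphi_p$ again, sum over $m$ (producing $h_{m-l}$) to get $\varphi_h(z|\cdot,q)$, and sum over $l$ using Lemma \ref{zal_b_h}(ii) to get a factor $1/\varphi_h(\cdot|\cdot,q)$; then match. (4) Finally, since both sides are polynomials in $x,y,z$ (for fixed $n,m$) after stripping the generating-function machinery, extract coefficients of $s^n u^m$ to recover \eqref{wzor} for all $n,m\geq 0$, and extend from $|t|<1$, $x,y,z\in[-1,1]$ to all values by polynomiality in the variables and rationality in $t$.

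I expect the main obstacle to be the bookkeeping in step (1)–(2): the left-hand side has a coupled double sum where the $(k+j)$-indexed Pochhammer $(te^{i(-\theta-\eta)})_{k+j}$ and the denominator $(t^2)_{k+j}$ link the two summation indices, so one cannot naively factor the $k$-sum from the $j$-sum. The right trick is presumably to introduce a generating variable that uncouples $k+j$ (a $q$-analogue of the binomial convolution / a $q$-Vandermonde or $q$-Gauss summation), so that after summation the coupled structure becomes the clean product $\varphi_h(x|te^{i\eta})\varphi_h(x|te^{-i\eta})\cdot(\text{correction})$; equivalently one recognizes $(te^{i(-\theta+\eta)})_k (te^{i(-\theta-\eta)})_{k+j}/(t^2)_{k+j}$ through Proposition \ref{reduce}(i) as involving $\omega(x,\cdot|\cdot)=v(x|\rho e^{i\theta})v(x|\rho e^{-i\theta})$ so the products telescope into the $\varphi_h$ factors of \eqref{fi_h}. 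Once the coupling is resolved the rest is a routine matching of generating functions via \eqref{chp2} and Lemma \ref{zal_b_h}(ii), plus the polynomiality argument for the analytic continuation.
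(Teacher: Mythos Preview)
Your generating-function strategy is reasonable in spirit, but it has a genuine gap at exactly the point you yourself flag as the ``main obstacle'': the coupled Pochhammer factors $(te^{i(-\theta-\eta)})_{k+j}/(t^2)_{k+j}$ do not decouple under any standard $q$-binomial, $q$-Vandermonde, or $q$-Gauss summation in a way that produces a clean product of $\varphi_h$-factors after summing over $n,m$ with weights $s^n u^m/((q)_n(q)_m)$. You write that ``the right trick is presumably\ldots'' but never supply one; Proposition~\ref{reduce} only rewrites individual factors of the form $v(\cdot|\cdot)$ and says nothing about how to telescope the $(k{+}j)$-indexed pieces against the $k$- and $j$-indexed ones. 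Without a concrete decoupling identity this step does not go through, and the rest of the argument (your steps (2)--(4)) never gets started.

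The paper's proof takes a completely different route that avoids the coupling problem entirely. It identifies both sides of \eqref{wzor} as two independent closed-form evaluations of the \emph{same} bilinear sum
\[
\sum_{k\ge 0}\frac{t^{k}}{(q)_{k}}\,h_{m+k}(y|q)\,h_{n+k}(z|q),
\]
each evaluation being the Poisson--Mehler kernel $\sum_{k\ge 0}h_k(y|q)h_k(z|q)t^k/(q)_k$ times one side of \eqref{wzor}. The left-hand side of \eqref{wzor} arises from Carlitz's formula \cite[(1.4)]{Carlitz72} for $\sum_k R_{m+k}(a|q)R_{n+k}(b|q)\tau^k/(q)_k$ in Rogers--Szeg\H{o} polynomials, translated to $q$-Hermite polynomials via $h_n(\cos\theta|q)=e^{ni\theta}R_n(e^{-2i\theta}|q)$; Carlitz's result already \emph{produces} the coupled $(k{+}j)$-structure as output, so nothing needs to be decoupled. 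The right-hand side comes from a separate evaluation of the same bilinear sum established in \cite[Lemma~3(i)]{Szabl-JFA}. Equating the two and cancelling the common Poisson--Mehler factor gives \eqref{wzor} directly. The missing idea in your attempt is thus not a decoupling trick at all, but the recognition that the double sum on the left is the \emph{output} of a known bilinear generating-function identity rather than something to be summed ab initio.
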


\begin{proof}
Is shifted to Section \ref{dowody}.
\end{proof}

As a corollary we have our main result.

\begin{theorem}
\label{main}For all $n\geq 0:$%
\begin{gather}
w_{n}\left( x|y,\rho _{1},z,\rho _{2},q\right) =  \label{ex_w_na_p} \\
\sum_{j=0}^{n}\left[ \QATOP{n}{j}\right] _{q}p_{j}\left( x|y,\rho
_{1},q\right) \frac{\rho _{2}^{n-j}\left( \rho _{1}^{2}q^{j}\right) _{n-j}}{%
\left( \rho _{1}^{2}\rho _{2}^{2}q^{n+j-1}\right) _{n-j}}g_{n-j}\left(
z|y,\rho _{1}\rho _{2}q^{n-1},q\right) ,  \notag
\end{gather}
\begin{equation}
p_{n}\left( x|y,\rho _{1},q\right) =\sum_{j=0}^{n}\left[ \QATOP{n}{j}\right]
_{q}w_{j}\left( x|y,\rho _{1},z,\rho _{2},q\right) \frac{\rho
_{2}^{n-j}\left( \rho _{1}^{2}q^{j}\right) _{n-j}}{\left( \rho _{1}^{2}\rho
_{2}^{2}q^{2j}\right) _{n-j}}p_{n-j}\left( z|y,\rho _{1}\rho
_{2}q^{j},q\right) ,  \label{ex_p_na_w}
\end{equation}%
where $g_{n}\left( z|y,\tau ,q\right) \allowbreak \ $is defined by (\ref{_g}%
).
\end{theorem}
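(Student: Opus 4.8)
The plan is to derive the expansion (\ref{ex_w_na_p}) by a change of variables inside the AW polynomial that reveals its structure in terms of complex exponentials, then match it against Lemma \ref{conversion}. Recall that $w_n(x|y,\rho_1,z,\rho_2,q)\allowbreak=\allowbreak\alpha_n(x|a,b,c,d,q)$ with $a,b,c,d$ given by (\ref{newparam}). Starting from the basic-hypergeometric form of $p_n$ in (\cite{KLS}, (14.1.1)) together with the normalization (\ref{correction}), one writes $\alpha_n$ with $x\allowbreak=\allowbreak\cos\phi$ as a double sum over $q$-binomial coefficients in the quantities $(ae^{i\phi},be^{i\phi},ce^{i\phi},de^{i\phi};q)$ and their companions with $e^{-i\phi}$. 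Substituting (\ref{newparam}) and identifying the relevant $q$-Pochhammer products via Proposition \ref{reduce}, the inner structure becomes exactly the left-hand side of (\ref{wzor}) in Lemma \ref{conversion}, with the roles of $\theta,\eta$ played by the angle of $x$ against $y$ and against $z$, and with $t\allowbreak=\allowbreak\rho_1\rho_2 q^{n-1}$ (this shift in $t$ is forced by the normalizing factor $(abcdq^{n-1})_n$ in (\ref{correction}), since $abcd\allowbreak=\allowbreak\rho_1^2\rho_2^2$ by the last Proposition, iii)).

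Next I would feed the output side of Lemma \ref{conversion} — namely $\sum_l\QATOPD[]{m}{l}_q(-t)^l q^{\binom{l}{2}} h_{m-l}(z|q)p_{n+l}(y|z,t,q)/(t^2)_{n+l}$ — back through the known expansions relating $p$, $h$, and the auxiliary families $b_n$ and $g_n$. Concretely, Lemma \ref{zal_b_h}, iii) and Proposition \ref{odwr_p}, i) give $p$ and $g$ in terms of $h$ and $b$, so the $l$-sum combining $h_{m-l}$ with a shifted $p_{n+l}$ collapses (via the Cauchy-type identity $\sum_j\QATOPD[]{n}{j}_q h_j b_{n-j}\allowbreak=\allowbreak 0$ of Lemma \ref{zal_b_h}, ii), and its $g$-analogue in Proposition \ref{odwr_p}, iv)) into a single $g_{n-j}$ with argument $z$ and parameter $\rho_1\rho_2 q^{n-1}$. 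Reassembling the remaining $q$-binomial and $q$-Pochhammer factors — here the denominators $(\rho_1^2\rho_2^2 q^{n+j-1})_{n-j}$ versus $(t^2)_{n+l}$ have to be reconciled using the elementary splitting $(a)_{m+k}\allowbreak=\allowbreak(a)_k(aq^k)_m$ — yields precisely (\ref{ex_w_na_p}), with $p_j(x|y,\rho_1,q)$ emerging as the coefficient of the surviving free exponential in $x$.

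Finally, the inverse expansion (\ref{ex_p_na_w}) I would obtain by a formal inversion argument identical in spirit to the passage from (\ref{a_na_c}) to (\ref{c_na_a}) and from (\ref{aw_na_asc}) to (\ref{asw_na_aw}): the matrix $\left(\QATOPD[]{n}{j}_q \rho_2^{n-j}(\rho_1^2 q^j)_{n-j}(\rho_1^2\rho_2^2 q^{n+j-1})_{n-j}^{-1}g_{n-j}(z|y,\rho_1\rho_2 q^{n-1},q)\right)_{j\le n}$ is lower-triangular with unit diagonal, and one checks that its inverse has entries $\QATOPD[]{n}{j}_q \rho_2^{n-j}(\rho_1^2 q^j)_{n-j}(\rho_1^2\rho_2^2 q^{2j})_{n-j}^{-1}p_{n-j}(z|y,\rho_1\rho_2 q^j,q)$; the verification is exactly the orthogonality-type relation between $p$ and $g$ of Proposition \ref{odwr_p}, iv) after the appropriate rescaling of parameters. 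Alternatively one inverts directly inside Lemma \ref{conversion} by reading it the other way.

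I expect the main obstacle to be purely bookkeeping: matching the $q$-Pochhammer denominators and the powers of $q^{\binom{\cdot}{2}}$ on the two sides after the substitution $t\allowbreak=\allowbreak\rho_1\rho_2 q^{n-1}$, since the shift of $t$ by $q^{n-1}$ interacts nontrivially with every index in the double sum; the formulae (\ref{q_q^-1})--(\ref{qbinq-1}) together with the identity $(a)_{m+k}\allowbreak=\allowbreak(a)_k(aq^k)_m$ are the tools that make this go through, but assembling them in the right order is where the care is needed. The structural content — that everything reduces to Lemma \ref{conversion} plus the $h$/$b$/$p$/$g$ dictionary — is straightforward once the change of variables is set up correctly.
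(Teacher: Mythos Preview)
Your overall architecture is in the right spirit --- Lemma \ref{conversion} is indeed the engine --- but the paper's route is both different and tighter, and your proposal has two soft spots that would turn into real work.

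First, the paper does not start from the raw ${}_4\phi_3$ in (\cite{KLS}, (14.1.1)): that is a single sum, and matching it to the double sum on the left of (\ref{wzor}) is not immediate. Instead the paper starts from Corollary \ref{conection}, i.e.\ formulae (\ref{aw_na_asc}) and (\ref{asw_na_aw}), which already express $\alpha_n$ (resp.\ $Q_n$) as a sum over $Q_k$ (resp.\ $\alpha_j$) with an inner $m$-sum of exactly the right shape. After the complex substitution (\ref{newparam}) the inner sum is the left side of (\ref{wzor}) \emph{with $m=0$}, so the right side of (\ref{wzor}) collapses to the single term $p_{n-j}(\cdot)/(t^2)_{n-j}$ --- there is no $l$-sum to collapse via $h/b$ orthogonality at all.

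Second, and this is the idea you are missing: to obtain $g_{n-j}$ in (\ref{ex_w_na_p}), the paper does \emph{not} go through Lemma \ref{zal_b_h} ii) or Proposition \ref{odwr_p} iv). It applies (\ref{wzor}) with $q\to q^{-1}$ (and $t=\rho_1\rho_2 q^{n-1}$); this is forced by the factor $q^{m(m-n+k)}$ and the reversed shifts $q^{n-m}$ inside (\ref{aw_na_asc}). The output is $p_{n-k}(z|y,t,q^{-1})/(t^2;q^{-1})_{n-k}$, and Proposition \ref{odwr_p} i) converts this directly into $g_{n-k}(z|y,t,q)$, with the sign and $q^{\binom{n-k}{2}}$ absorbed via (\ref{qbinq-1}). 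This single $q\to q^{-1}$ step replaces all of your proposed $l$-sum manipulation.

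Third, the paper proves (\ref{ex_p_na_w}) directly the same way (from (\ref{asw_na_aw}) plus (\ref{wzor}) at $m=0$, $t=\rho_1\rho_2 q^{j}$), not by inverting (\ref{ex_w_na_p}). Your inversion argument would require the vanishing of
\[
\sum_{m=0}^{n-k}\QATOPD[ ] {n-k}{m}_{q}\,\frac{p_{m}(z|y,tq^{k},q)}{(t^{2}q^{2k})_{m}}\,\frac{g_{n-k-m}(z|y,tq^{n-1},q)}{(t^{2}q^{n+k+m-1})_{n-k-m}}
\]
for $k<n$. This is \emph{not} Proposition \ref{odwr_p} iv) (there the parameters of $p$ and $g$ agree); it is exactly Corollary \ref{identies} i), which the paper derives as a \emph{consequence} of the theorem. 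So your inversion step, as stated, is circular.
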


\begin{proof}
Is shifted to Section \ref{dowody}.
\end{proof}

\begin{remark}
It should be stressed that although there exist formulae for 'connection
coefficients' between AW polynomials with different sets of parameters such
as (\cite{AW85}, (6.4)) or (\cite{IA}, (16.4.3)) the formulae (\ref%
{ex_p_na_w}) and (\ref{ex_w_na_p}) are by no means their direct or simple
consequences. Of course on the way we used (\ref{asw_na_aw}) and (\ref%
{aw_na_asc}) which may be regarded as more or less direct consequences of (%
\cite{AW85}, (6.4)). However as the proof of Lemma \ref{conversion} shows
the way from (\ref{asw_na_aw}) and (\ref{aw_na_asc}) to (\ref{ex_p_na_w})
and (\ref{ex_w_na_p}) is neither direct nor simple.
\end{remark}

As a corollary we have the following expansion of the Askey--Wilson density
with complex parameters.

\begin{corollary}
\label{AWexp}i)%
\begin{equation}
f_{W}\left( x|y,\rho _{1},z,\rho _{2},q\right) =f_{p}\left( x|y,\rho
_{1},q\right) \sum_{j\geq 0}\frac{\rho _{2}^{j}p_{j}\left( z|y,\rho _{1}\rho
_{2},q\right) }{\left( q,\rho _{1}^{2}\rho _{2}^{2}\right) _{j}}p_{j}\left(
x|y,\rho _{1},q\right) ,  \label{rozwiniecie1}
\end{equation}%
hence for for all $n\geq 1,\left\vert y\right\vert ,\left\vert z\right\vert
\leq 1,\left\vert \rho _{1}\right\vert ,\left\vert \rho _{2}\right\vert <1:$%
\begin{equation}
\int_{\lbrack -1,1]}p_{n}\left( x|y,\rho _{1},q\right) f_{W}\left( x|y,\rho
_{1},z,\rho _{2},q\right) dx=\frac{\rho _{2}^{n}\left( \rho _{1}^{2}\right)
_{n}}{\left( \rho _{1}^{2}\rho _{2}^{2}\right) _{n}}p_{n}(z|y,\rho _{1}\rho
_{2},q),  \label{calka1}
\end{equation}

ii) 
\begin{gather}
f_{p}\left( x|y,\rho _{1},q\right) =f_{W}\left( x|y,\rho _{1},z,\rho
_{2},q\right) \times  \label{rozwiniecie2} \\
\sum_{j\geq 0}\frac{\left( \rho _{1}^{2}\rho _{2}^{2}\right) _{2j}\rho
_{2}^{j}g_{j}\left( z|y,\rho _{1}\rho _{2}q^{j-1},q\right) }{\left( q,\rho
_{2}^{2})_{j}(\rho _{1}^{2}\rho _{2}^{2}q^{j-1}\right)
_{j}^{2}\prod_{k=1}^{j}\omega \left( y,z|\rho _{1}\rho _{2}q^{k-1}\right) }%
w_{j}\left( x|y,\rho _{1},z,\rho _{2},q\right) ,  \notag
\end{gather}

hence for all $n\geq 1,\left\vert y\right\vert ,\left\vert z\right\vert \leq
1,\left\vert \rho _{1}\right\vert ,\left\vert \rho _{2}\right\vert <1:$%
\begin{equation}
\int_{\lbrack -1,1]}w_{n}(x|y,\rho _{1},z,\rho _{2},q)f_{p}\left( x|y,\rho
_{1},q\right) dx=\frac{\rho _{2}^{n}\left( \rho _{1}^{2}\right) _{n}}{\left(
\rho _{1}^{2}\rho _{2}^{2}q^{n-1}\right) _{n}}g_{n}\left( z|y,\rho _{1}\rho
_{2}q^{n-1},q\right) .  \label{calka2}
\end{equation}
\end{corollary}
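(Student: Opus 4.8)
The plan is to derive part i) directly from the expansion \eqref{ex_w_na_p} of Theorem \ref{main} together with the general density-expansion principle \eqref{series1}. First I would recall that $\{p_n(x|y,\rho_1,q)\}$ is the monic (up to the standard leading-coefficient normalization) family orthogonal with respect to $f_p(x|y,\rho_1,q)$, with squared norm $(\rho_1^2,q)_n$ by \eqref{_ort}, while $\{w_n(x|y,\rho_1,z,\rho_2,q)\}$ is orthogonal with respect to $f_W$, with squared norm read off from \eqref{alfa^2} after the substitution \eqref{newparam} and Proposition \ref{reduce}. Then \eqref{series1} with $a_n\leftrightarrow w_n$, $b_n\leftrightarrow p_n$, $d\alpha\leftrightarrow f_W\,dx$, $d\beta\leftrightarrow f_p\,dx$ tells us that the ratio $f_p/f_W$ equals $\sum_n \frac{\gamma_{0,n}\hat b_0}{\hat a_n}\,w_n$, where $\gamma_{0,n}$ is the coefficient of $p_0$ in \eqref{ex_w_na_p}; but for i) I instead want $f_W/f_p$, so I would apply \eqref{series1} in the reverse direction, expanding $f_W/f_p$ in the $p_n$-basis. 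The relevant connection coefficients are those expressing $w_n$ in terms of $p_j$, i.e. exactly \eqref{ex_w_na_p}: the coefficient $\delta_{j,n}$ of $p_j(x|y,\rho_1,q)$ in $w_n$ is $\left[\QATOP{n}{j}\right]_q \rho_2^{n-j}(\rho_1^2q^j)_{n-j}/(\rho_1^2\rho_2^2q^{n+j-1})_{n-j}\cdot g_{n-j}(z|y,\rho_1\rho_2q^{n-1},q)$.

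The key computational step is then to reorganize the double sum. Writing $f_W/f_p = \sum_{j\geq 0} c_j\, p_j(x|y,\rho_1,q)$, orthogonality gives $c_j = \frac{1}{(\rho_1^2,q)_j}\int_{[-1,1]} p_j(x|y,\rho_1,q)\, f_W\,dx$, and on the other hand, since $f_W/f_p$ is the generating object whose moments against $w_n$ are $\delta_{n,0}$, one inverts the triangular system \eqref{ex_w_na_p}/\eqref{ex_p_na_w} to get $c_j$ in closed form. Concretely, I would plug $w_n = \sum_j \delta_{j,n} p_j$ into $\int p_m (f_W/f_p) f_p\,dx$ and match; the cleanest route is to use \eqref{ex_p_na_w} to express $p_0 = 1$ purely in terms of the $w_j$'s — but $p_0$ is already $1$, so instead I extract $c_j$ by noting $\int p_j f_W\,dx$ must reproduce, via $f_W=f_p\sum_k c_k p_k$ and \eqref{_ort}, the value $c_j(\rho_1^2,q)_j$. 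The identity \eqref{calka1} is then the assertion that this integral equals $\frac{\rho_2^n(\rho_1^2)_n}{(\rho_1^2\rho_2^2)_n}p_n(z|y,\rho_1\rho_2,q)$, which I would verify by the following device: expand $f_W$ using Corollary \ref{gestAW}, recognize the product $\prod_j \omega(y,z|\rho_1\rho_2 q^j)/\prod_j\omega(x,y|\rho_1 q^j)$ as $\varphi_h$-type generating functions via \eqref{fi_h} and Proposition \ref{reduce} i), and reduce the integral to a known $q$-Hermite linearization integral (the three-fold $h$-integral mentioned in the Remark after \eqref{fAW}); matching generating functions in the auxiliary variable then pins down the coefficient. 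Part i)'s second formula \eqref{calka1} is just the $c_n$-coefficient identity unwrapped.

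For part ii) the plan is symmetric: apply \eqref{series1} in the other direction, expanding $f_p/f_W$ in the $w_n$-basis, with connection coefficients now taken from \eqref{ex_p_na_w} (the coefficient of $w_j$ in $p_n$). Here I must also divide by the $w$-norms, which by \eqref{alfa^2} under \eqref{newparam} carry the extra factors $(\rho_1^2\rho_2^2q^{n-1})_n$, $\prod_{k=1}^{n}\omega(y,z|\rho_1\rho_2 q^{k-1})$ and so on — this is the source of the more complicated-looking denominator in \eqref{rozwiniecie2}. So the steps are: (a) write $f_p/f_W = \sum_j \tilde c_j\, w_j$ with $\tilde c_j = \frac{1}{\hat w_j}\int w_j(x|\cdots)f_p\,dx$; (b) identify $\hat w_j$ from \eqref{alfa^2} + Proposition \ref{reduce}; (c) compute $\int w_j f_p\,dx$ by inverting Theorem \ref{main}, which gives $g_j(z|y,\rho_1\rho_2q^{j-1},q)$ times the binomial/Pochhammer prefactor, yielding \eqref{calka2}; (d) assemble (b) and (c) to read off the coefficient in \eqref{rozwiniecie2}. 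The main obstacle I anticipate is bookkeeping: correctly converting the AW squared-norm \eqref{alfa^2} into the $(y,z,\rho_1,\rho_2)$-variables and keeping the $q$-shifts in $g_{n-j}(z|y,\rho_1\rho_2 q^{n-1},q)$ straight when inverting the triangular relation — a single misplaced $q^{n-1}$ versus $q^{j}$ propagates everywhere. A secondary subtlety is justifying that the expansions converge in the strong (almost-everywhere, absolute) sense, which follows from the Rademacher–Menshov argument sketched in the Introduction once one checks the coefficients decay geometrically, as they visibly do for $|\rho_1|,|\rho_2|<1$.
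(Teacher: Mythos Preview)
Your overall strategy --- apply the density-expansion principle \eqref{series}/\eqref{series1} with the connection coefficients supplied by Theorem~\ref{main} --- is exactly what the paper does. But you have the two formulae of Theorem~\ref{main} swapped, and this is not a cosmetic slip: it makes the ``key computational step'' you describe unnecessary and is the reason you find yourself reaching for triangular inversions, the explicit product form of $f_W$ from Corollary~\ref{gestAW}, and three-fold $h$-integrals.

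Recall how \eqref{series} is set up: if $\{a_n\}$ is orthogonal with respect to $d\alpha$ and $\{b_n\}$ with respect to $d\beta$, and $a_n=\sum_k\gamma_{k,n}b_k$, then it is $d\beta/d\alpha$ that expands in the $a_n$'s, with coefficients $\gamma_{0,n}\hat b_0/\hat a_n$. For part i) you want $f_W/f_p$ expanded in the $p_n$'s, so you must take $d\alpha=f_p\,dx$, $a_n=p_n$, $d\beta=f_W\,dx$, $b_n=w_n$. The required connection coefficients are therefore those of $p_n$ expanded in the $w_j$'s, i.e.\ \eqref{ex_p_na_w}, \emph{not} \eqref{ex_w_na_p}. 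Reading off the $j=0$ term of \eqref{ex_p_na_w} gives
\[
\gamma_{0,n}=\frac{\rho_2^{\,n}(\rho_1^2)_n}{(\rho_1^2\rho_2^2)_n}\,p_n(z\,|\,y,\rho_1\rho_2,q),
\]
and dividing by $\hat a_n=(\rho_1^2,q)_n$ from \eqref{_ort} yields \eqref{rozwiniecie1} immediately; \eqref{calka1} is then just orthogonality. No inversion, no product formula, no $q$-Hermite linearization is needed.

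Symmetrically, part ii) asks for $f_p/f_W$ in the $w_n$-basis, so now $a_n=w_n$, $b_n=p_n$, and the relevant expansion is $w_n$ in terms of $p_j$, i.e.\ \eqref{ex_w_na_p}. Its $j=0$ term is $\gamma_{0,n}=\rho_2^{\,n}(\rho_1^2)_n\,g_n(z\,|\,y,\rho_1\rho_2 q^{n-1},q)/(\rho_1^2\rho_2^2 q^{n-1})_n$, which is exactly \eqref{calka2}; dividing by the $w$-norm from \eqref{alfa^2} (translated via \eqref{newparam} and Proposition~\ref{reduce}) gives \eqref{rozwiniecie2}. Once you correct the swap, the bookkeeping you were worried about disappears: the $q^{n-1}$ in $g_n$ is already present in \eqref{ex_w_na_p} at $j=0$ and does not have to be produced by any inversion.
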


\begin{proof}
We use idea of density expansion presented at the beginning of the paper. To
get i) we use (\ref{series}), (\ref{ex_p_na_w}) and (\ref{alfa^2}) with $%
d=c=0$. ii) We use (\ref{series}), (\ref{ex_w_na_p}) and (\ref{alfa^2}) .
\end{proof}

\begin{remark}
Expansion given in assertion i) was (with slightly different scaling)
obtained by the similar method in \cite{Szabl-ker} or as a particular case
of an other expansion in \cite{Szabl-KS}.
\end{remark}

\begin{remark}
\label{generalization}Let us consider the following correspondence between
two groups of polynomials. To the first group let us take polynomials $%
\{h_{n}(x|q),$ $b_{n}\left( y|q\right) ,$\newline
$p_{n}\left( x|y,\rho ,q\right) \}$ and to the second $\{p_{n}\left(
x|y,\rho _{1},q\right) ,$ $g_{n}\left( z|y,\rho _{1}\rho
_{2}q^{n-1},q\right) ,$ \newline
$w_{n}\left( x|y,\rho _{1},z,\rho _{2},q\right) \}$ then we see that formula
(\ref{ex_w_na_p}) is a generalization of Lemma \ref{zal_b_h}, iii) while (%
\ref{ex_p_na_w}) is a generalization of Lemma \ref{zal_b_h}, iv). Besides (%
\ref{rozwiniecie1}) is a generalization of the Poisson--Mehler expansion
formula 
\begin{equation}
f_{p}\left( x|y,\rho ,q\right) =f_{h}\left( x|q\right) \sum_{j=0}^{\infty }%
\frac{\rho ^{j}}{\left( q\right) _{j}}h_{j}\left( x|q\right) h_{j}\left(
y|q\right) ,  \label{PM}
\end{equation}%
which has many proofs see e.g. \cite{bressoud}, \cite{IA} (page 324, Thm.
13.1.6) or \cite{Szabl-exp} where the presented above idea of the density
expansion is applied. Similarly (\ref{rozwiniecie2}) is a generalization of 
\cite{Szabl-exp}, (5.3). Notice that following Remark \ref{rho=0} we see
that (\ref{rozwiniecie1}) as well (\ref{ex_w_na_p}) reduce to (\ref{PM}) and
Lemma \ref{zal_b_h}, iii) when it is set $\rho _{1}\allowbreak =\allowbreak
0.$ Similarly (\ref{rozwiniecie2}) and (\ref{ex_p_na_w}) reduce to (\cite%
{Szabl-exp}, (5.3)) and Lemma \ref{zal_b_h}, iv) when we set $\rho
_{1}\allowbreak =\allowbreak 0.$
\end{remark}

\begin{corollary}
\label{identies}i) For all $n\geq 1,0\leq k<n,z,y,t\in \mathbb{R}:$%
\begin{equation*}
\sum_{j=0}^{n-k}\QATOPD[ ] {n-k}{j}_{q}\frac{p_{j}\left( z|y,tq^{k},q\right) 
}{\left( t^{2}q^{2k}\right) _{j}}\frac{g_{n-k-j}\left( z|y,tq^{n-1},q\right) 
}{\left( t^{2}q^{n+j+k-1}\right) _{n-k-j}}\allowbreak =\allowbreak 0,
\end{equation*}

ii) for all $n\geq 1,0\leq k<n,z,y,t\in \mathbb{R}:$%
\begin{equation*}
\sum_{m=0}^{n-k}\QATOPD[ ] {n-k}{m}_{q}\frac{p_{n-k-m}\left(
z|y,tq^{m+k},q\right) g_{m}(z|y,tq^{m+k-1},q)}{\left( t^{2}q^{2m+2k}\right)
_{n-k-m}\left( t^{2}q^{m+2k-1}\right) _{m}}\allowbreak =\allowbreak 0.
\end{equation*}
\end{corollary}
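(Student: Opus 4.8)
The plan is to derive Corollary \ref{identies} directly from Theorem \ref{main} by the standard device of ``composing'' the two mutually inverse expansions and reading off the vanishing of the off-diagonal coefficients. Concretely, formula (\ref{ex_w_na_p}) expresses each $w_n$ as a $q$-binomial combination of the $p_j$ with explicit coefficients, and (\ref{ex_p_na_w}) expresses each $p_n$ as a $q$-binomial combination of the $w_j$ with explicit coefficients; since $\{w_j\}$ and $\{p_j\}$ are each bases of the space of polynomials of degree $\le n$, substituting one expansion into the other must reproduce the identity matrix.

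First I would substitute (\ref{ex_w_na_p}) into the right-hand side of (\ref{ex_p_na_w}): write
\begin{equation*}
p_n(x|y,\rho_1,q)=\sum_{j=0}^{n}\left[\QATOP{n}{j}\right]_q A_{n,j}\,w_j(x|\cdots)=\sum_{j=0}^{n}\left[\QATOP{n}{j}\right]_q A_{n,j}\sum_{l=0}^{j}\left[\QATOP{j}{l}\right]_q B_{j,l}\,p_l(x|y,\rho_1,q),
\end{equation*}
where $A_{n,j}$ is the coefficient from (\ref{ex_p_na_w}) and $B_{j,l}$ the one from (\ref{ex_w_na_p}). Interchanging the order of summation and using the elementary identity $\left[\QATOP{n}{j}\right]_q\left[\QATOP{j}{l}\right]_q=\left[\QATOP{n}{l}\right]_q\left[\QATOP{n-l}{j-l}\right]_q$, the coefficient of $p_l(x|y,\rho_1,q)$ becomes $\left[\QATOP{n}{l}\right]_q\sum_{j=l}^{n}\left[\QATOP{n-l}{j-l}\right]_q A_{n,j}B_{j,l}$; since the expansions are inverse to one another this must equal $\delta_{n,l}$. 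For $l<n$ this gives exactly a sum of the desired shape. After reindexing with $k=l$ and the inner summation variable running over $j-k$, and after simplifying the products of $q$-Pochhammer symbols $\left(\rho_1^2q^{\,j}\right)_{\cdot}$, $\left(\rho_1^2\rho_2^2 q^{\,\cdot}\right)_{\cdot}$ that appear in $A_{n,j}B_{j,l}$ (the $\rho_2$-powers telescope to a common factor $\rho_2^{\,n-k}$, which is nonzero and can be cancelled since the polynomials in question are proven identities valid for all real parameters, in particular for $\rho_2\neq 0$), one lands on assertion ii) with $t$ playing the role of $\rho_1\rho_2$. Assertion i) is obtained in the mirror-image way, substituting (\ref{ex_p_na_w}) into (\ref{ex_w_na_p}) and extracting the off-diagonal coefficients in the $\{w_j\}$ basis.

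The one genuine point requiring care is the bookkeeping of the $q$-Pochhammer factors: the coefficients $A_{n,j}$ and $B_{j,l}$ each carry ratios like $(\rho_1^2 q^{j})_{n-j}/(\rho_1^2\rho_2^2 q^{2j})_{n-j}$ and $(\rho_1^2 q^{j})_{n-j}/(\rho_1^2\rho_2^2 q^{n+j-1})_{n-j}$, and one must verify that after multiplying and reindexing these collapse precisely into the single products $(t^2 q^{2k})_j$, $(t^2 q^{n+j+k-1})_{n-k-j}$ (resp. $(t^2 q^{2m+2k})_{n-k-m}$, $(t^2 q^{m+2k-1})_m$) displayed in the statement, with no leftover $q$-power mismatch. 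I expect this Pochhammer-algebra to be the main obstacle — it is routine but error-prone — whereas the conceptual step (inverse expansions force the off-diagonal coefficients to vanish) is immediate. A cleaner alternative for i) is to invoke Proposition \ref{odwr_p} iv), $\sum_{j}\left[\QATOP{n}{j}\right]_q p_j(x|y,\rho,q)\,g_{n-j}(x|y,\rho,q)=0$, with $x\mapsto z$, $y\mapsto y$, $\rho\mapsto tq^{k}$ and $n\mapsto n-k$, after matching the extra $q$-shifts; this turns i) into essentially a specialization of an already-proved orthogonality-type relation, and then ii) follows from i) by the $q\leftrightarrow q^{-1}$ symmetry relating $g_n$ and $p_n$ recorded in Proposition \ref{odwr_p} i) together with (\ref{qbinq-1}).
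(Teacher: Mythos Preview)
Your primary strategy---substituting one of the expansions (\ref{ex_w_na_p}), (\ref{ex_p_na_w}) into the other, changing the order of summation via $\left[\QATOP{n}{j}\right]_q\left[\QATOP{j}{l}\right]_q=\left[\QATOP{n}{l}\right]_q\left[\QATOP{n-l}{j-l}\right]_q$, and invoking uniqueness of expansion in the bases $\{w_k\}$ resp.\ $\{p_k\}$ to kill the off-diagonal terms---is exactly the paper's argument. You also correctly match which composition yields which assertion (substituting (\ref{ex_w_na_p}) into (\ref{ex_p_na_w}) gives ii), the mirror gives i)), and you correctly isolate the common nonzero factor $\rho_2^{\,n-k}(\rho_1^2 q^k)_{n-k}$ to be cancelled before setting $t=\rho_1\rho_2$.

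However, the ``cleaner alternative'' you sketch at the end does not work. In Proposition~\ref{odwr_p}~iv) the polynomials $p_j$ and $g_{n-j}$ share a \emph{single} parameter $\rho$, whereas in assertion~i) the parameter in $p_j$ is $tq^{k}$ while that in $g_{n-k-j}$ is $tq^{n-1}$; these differ for every $k<n-1$, and there are additional $q$-Pochhammer denominators with indices depending on $j$. This is not a mere $q$-shift to be ``matched''---the identities are genuinely different, as the paper itself notes in the Remark immediately following the Corollary. Likewise, assertions~i) and~ii) are not related by the $q\leftrightarrow q^{-1}$ symmetry of Proposition~\ref{odwr_p}~i): in~i) both polynomial parameters are independent of the summation index, whereas in~ii) both depend on $m$, so the two sums have structurally different shapes. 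Drop the alternative; your main argument already suffices.
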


\begin{proof}
Is shifted to Section \ref{dowody}.
\end{proof}

\begin{remark}
Notice that apart from the two above mentioned identities involving
polynomials $\left\{ p_{n}\right\} $ and $\left\{ g_{n}\right\} $ we have
also the one exposed in Proposition \ref{odwr_p}, iv). All these identities
are different.
\end{remark}

\section{Towards probabilistic interpretation\label{prob}}

To get nice probabilistic interpretation of Askey--Wilson density and
polynomials we have to make certain re-scaling and re-normalization of all
polynomials that appeared above. Let us introduce the following sets:%
\begin{equation*}
S\left( q\right) =\left\{ 
\begin{array}{ccc}
\left[ -2/\sqrt{1-q},2/\sqrt{1-q}\right] & if & \left\vert q\right\vert <1,
\\ 
\mathbb{R} & if & q=1.%
\end{array}%
\right.
\end{equation*}

We will consider the following polynomials: \newline
\begin{gather*}
H_{n}\left( x|q\right) \allowbreak =\allowbreak h_{n}\left( x\sqrt{1-q}%
/2|q\right) /\left( 1-q\right) ^{n/2}, \\
P_{n}\left( x|y,\rho ,q\right) =p_{n}\left( x\sqrt{1-q}/2|y\sqrt{1-q}/2,\rho
,q\right) /\left( 1-q\right) ^{n/2}, \\
A_{n}\left( x|y,\rho _{1},z,\rho _{2},q\right) \allowbreak =\allowbreak
w_{n}\left( x\sqrt{1-q}/2|y\sqrt{1-q}/2,\rho _{1},z\sqrt{1-q}/2,\rho
_{2},q\right) /\left( 1-q\right) ^{n/2}, \\
B_{n}\left( x|q\right) \allowbreak =\allowbreak b_{n}\left( x\sqrt{1-q}%
/2|q\right) /\left( 1-q\right) ^{n/2}, \\
~G_{n}\left( x|y,\rho ,q\right) \allowbreak =\allowbreak g_{n}\left( x\sqrt{%
1-q}/2|y\sqrt{1-q}/2,\rho ,q\right) .
\end{gather*}

Notice that now polynomials $H_{n},$ $P_{n},$ and $A_{n}$ are monic.

One can easily check that these new polynomials satisfy the following
three-term recurrence relations: 
\begin{gather}
H_{n+1}\left( x|q\right) \allowbreak =\allowbreak xH_{n}\left( x|q\right)
-[n]_{q}H_{n-1}(x|q),  \label{pH} \\
P_{n+1}\left( x|y,\rho ,q\right) =(x-\rho yq^{n})P_{n}\left( x|y,\rho
,q\right) -[n]_{q}(1-\rho ^{2}q^{n-1})P_{n-1}\left( x|y,\rho ,q\right) ,
\label{pASC} \\
A_{n+1}\left( x|y,\rho _{1},z,\rho _{2},q\right) =(x-\beta _{n})A_{n}\left(
x|y,\rho _{1},z,\rho _{2},q\right) -\gamma _{n}A_{n-1}\left( x|y,\rho
_{1},z,\rho _{2},q\right) ,  \label{pA}
\end{gather}%
where we use Pochhammer symbol and (\ref{gr_w}) for the brevity and where:%
\begin{gather}
\beta _{n}\allowbreak =\allowbreak \frac{q^{n}\left( y\rho _{1}+z\rho
_{2}\right) }{(1-\rho _{1}^{2}\rho _{2}^{2}q^{2n-2})(1-\rho _{1}^{2}\rho
_{2}^{2}q^{2n})}\times  \label{beta_n} \\
((1-q^{n-2}(1+q-q^{n+1})\rho _{1}^{2}\rho _{2}^{2})+q^{n-1}\rho _{1}\rho
_{2}\left( z\rho _{1}+y\rho _{2}\right) (1-q^{n+1}-q^{n}+\rho _{1}^{2}\rho
_{2}^{2}q^{2n-1})),  \notag
\end{gather}%
\begin{equation}
\gamma _{n}\allowbreak =\allowbreak \frac{\left[ n\right] _{q}\left( \rho
_{1}^{2}q^{n-1}\right) _{1}\left( \rho _{2}^{2}q^{n-1}\right) _{1}\left(
\rho _{1}^{2}\rho _{2}^{2}q^{n-2}\right) _{1}\omega \left( y\frac{\sqrt{1-q}%
}{2},z\frac{\sqrt{1-q}}{2}|\rho _{1}\rho _{2}q^{n-1}\right) }{\left( \rho
_{1}^{2}\rho _{2}^{2}q^{2n-3}\right) _{3}\left( \rho _{1}^{2}\rho
_{2}^{2}q^{2n-2}\right) _{1}},  \label{c_n}
\end{equation}%
with $H_{-1}\left( x|q\right) \allowbreak =\allowbreak H_{-1}\left(
x|a,q\right) \allowbreak =\allowbreak P_{-1}\left( x|y,\rho ,q\right)
\allowbreak =\allowbreak B_{-1}\left( x|q\right) \allowbreak =\allowbreak
G_{-1}\left( x|y,\rho ,q\right) \allowbreak =\allowbreak A_{-1}\left(
x|y,\rho _{1},z,\rho _{2},q\right) \allowbreak =\allowbreak 0,$ $H_{0}\left(
x|q\right) \allowbreak =\allowbreak H_{0}\left( x|a,q\right) \allowbreak
=\allowbreak P_{0}\left( x|y,\rho ,q\right) \allowbreak =\allowbreak
B_{0}\left( x|q\right) \allowbreak =\allowbreak G_{0}\left( x|y,\rho
,q\right) \allowbreak =\allowbreak A_{0}\left( x|y,\rho _{1},z,\rho
_{2},q\right) \allowbreak =\allowbreak 1.$

Besides this change will allow passing with $q$ to $1^{-}.$ Hence we will
have two special cases associated with two extreme values of $q$. The first
one $q\allowbreak =\allowbreak 0$ (so called free case) and the second $%
q\allowbreak =\allowbreak 1$ (classical case) that will allow to support
intuition. To describe those cases swiftly let us denote by $H_{n}\left(
x\right) $ monic polynomials that are orthogonal with respect to the measure
with the density $\exp \left( -x^{2}/2\right) /\sqrt{2\pi }.$ and by $%
U_{n}\left( x\right) $ let us denote Chebyshev polynomials of the second
kind . Thus these polynomials are defined by the following three-term
recurrence relations (see e.g. \cite{KLS}, (9.15.3) with obvious adaptation
to the density $\exp \left( -x^{2}/2\right) /\sqrt{2\pi }$ and (9.8.4)):%
\begin{equation}
H_{n+1}\left( x\right) =xH_{n}\left( x\right) -nH_{n-1}\left( x\right)
,U_{n+1}\left( x\right) =2xU_{n}\left( x\right) -U_{n-1}\left( x\right) ,
\label{zwH}
\end{equation}%
with $H_{-1}\left( x\right) \allowbreak =\allowbreak U_{-1}\left( x\right)
\allowbreak =\allowbreak 0,$ $H_{0}\left( x\right) \allowbreak =\allowbreak
U_{0}\left( x\right) \allowbreak =\allowbreak 1.$

\begin{proposition}
\label{wilomiany 01}For all $n\geq 0:$ i) $H_{n}\left( x|1\right)
\allowbreak =\allowbreak H_{n}\left( x\right) ,~H_{n}\left( x|0\right)
\allowbreak =\allowbreak U_{n}\left( x/2\right) ,$

ii) 
\begin{eqnarray*}
P_{n}\left( x|y,\rho ,1\right) \allowbreak &=&\allowbreak H_{n}\left( \frac{%
x-\rho y}{\sqrt{1-\rho ^{2}}}\right) \left( 1-\rho ^{2}\right) ^{n/2}, \\
P_{n}\left( x|y,\rho ,0\right) \allowbreak &=&\allowbreak U_{n}\left(
x/2\right) \allowbreak -\allowbreak \rho yU_{n-1}\left( x/2\right) +\rho
^{2}U_{n-2}\left( x/2\right) ,
\end{eqnarray*}

iii) 
\begin{gather*}
A_{n}\left( x|y,\rho _{1},z,\rho _{2},1\right) \allowbreak =\allowbreak
\left( \frac{\left( 1-\rho _{1}^{2}\right) \left( 1-\rho _{2}^{2}\right) }{%
1-\rho _{1}^{2}\rho _{2}^{2}}\right) ^{n/2}\times \\
H_{n}\left( \frac{x(1-\rho _{1}^{2}\rho _{2}^{2})-\rho _{1}\left( 1-\rho
_{2}^{2}\right) y-\rho _{2}(1-\rho _{1}^{2})z}{\sqrt{\left( 1-\rho
_{1}^{2}\right) \left( 1-\rho _{2}^{2}\right) \left( 1-\rho _{1}^{2}\rho
_{2}^{2}\right) }}\right) , \\
A_{n}\left( x|y,\rho _{1},z,\rho _{2},0\right) \allowbreak =\allowbreak
U_{n}\left( x/2\right) \allowbreak \allowbreak -\allowbreak \left( \rho
_{1}y+\rho _{2}z\right) U_{n-1}\left( x/2\right) \allowbreak \\
+\allowbreak (\rho _{1}^{2}+\rho _{2}^{2}+yz\rho _{1}\rho _{2})U_{n-2}\left(
x/2\right) \allowbreak -\allowbreak \rho _{1}\rho _{2}\left( \rho _{1}z+\rho
_{2}y\right) U_{n-3}\left( x/2\right) \allowbreak +\allowbreak \rho
_{1}^{2}\rho _{2}^{2}U_{n-4}\left( x/2\right) .
\end{gather*}
\end{proposition}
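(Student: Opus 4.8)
The plan is to read off all six identities from the three-term recurrences (\ref{pH}), (\ref{pASC}), (\ref{pA}) — already recorded for the rescaled families $\{H_n\}$, $\{P_n\}$, $\{A_n\}$ — by specializing their coefficients to $q=1$ and to $q=0$, and then invoking uniqueness of the polynomial solution of a fixed three-term recurrence with initial data $(-1,0)\mapsto(0,1)$. \textbf{(i)} At $q=1$ one has $[n]_1=n$, so (\ref{pH}) is literally (\ref{zwH}) and $H_n(x|1)=H_n(x)$; at $q=0$, $[n]_0=1$ for $n\ge1$ and $[0]_0=0$, so (\ref{pH}) becomes $H_{n+1}=xH_n-H_{n-1}$ ($n\ge1$) with $H_0=1,H_1=x$, which is the recurrence of $U_n(x/2)$, whence $H_n(x|0)=U_n(x/2)$. \textbf{(ii)} At $q=1$, (\ref{pASC}) reads $P_{n+1}=(x-\rho y)P_n-n(1-\rho^2)P_{n-1}$, whose monic solution is $(1-\rho^2)^{n/2}H_n\big((x-\rho y)/\sqrt{1-\rho^2}\big)$ — the affine substitution $x\mapsto(x-\rho y)/\sqrt{1-\rho^2}$ applied to (\ref{zwH}) together with the homogeneity $s^{n/2}H_n(\cdot/\sqrt s)$ — and its first two values, $1$ and $x-\rho y$, match; at $q=0$, (\ref{pASC}) gives $P_1=x-\rho y$, $P_2=xP_1-(1-\rho^2)$ and $P_{n+1}=xP_n-P_{n-1}$ for $n\ge2$, and one checks that $U_n(x/2)-\rho y\,U_{n-1}(x/2)+\rho^2U_{n-2}(x/2)$ obeys the plain Chebyshev recurrence for $n\ge2$ and reproduces $P_0,P_1,P_2$.

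For \textbf{(iii)} I would run the same scheme on (\ref{pA})–(\ref{c_n}). At $q=1$: $[n]_1=n$, every $q$-Pochhammer factor $(aq^k;q)_m$ degenerates to $(1-a)^m$, and $\omega\big(\tfrac{\sqrt{1-q}}{2}y,\tfrac{\sqrt{1-q}}{2}z\,\big|\,\rho_1\rho_2q^{n-1}\big)\to(1-\rho_1^2\rho_2^2)^2$ because the two variable-dependent terms of $\omega$ each carry a factor $1-q$; after cancellation $\beta_n$ becomes the $n$-independent constant $\mu:=\big(\rho_1(1-\rho_2^2)y+\rho_2(1-\rho_1^2)z\big)/(1-\rho_1^2\rho_2^2)$ and $\gamma_n\to n\sigma^2$ with $\sigma^2:=(1-\rho_1^2)(1-\rho_2^2)/(1-\rho_1^2\rho_2^2)$. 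Then (\ref{pA}) is the Hermite recurrence $A_{n+1}=(x-\mu)A_n-n\sigma^2A_{n-1}$, whose monic solution is $\sigma^nH_n\big((x-\mu)/\sigma\big)$; rewriting $(x-\mu)/\sigma$ over the common denominator $1-\rho_1^2\rho_2^2$ produces exactly the displayed argument and prefactor. At $q=0$: for $n$ large $\beta_n\to0$ and $\gamma_n\to1$, so $A_n(\cdot|0)$ is a depth-$\le4$ perturbation of $U_n(x/2)$ that eventually satisfies $A_{n+1}=xA_n-A_{n-1}$; it then suffices to pin down the five coefficients by computing $A_0,\dots,A_4$, i.e. $\beta_0,\beta_1,\beta_2$ and $\gamma_1,\gamma_2,\gamma_3$ at $q=0$ from (\ref{beta_n})–(\ref{c_n}) — these carry $q^{-1}$, $q^{-2}$ and must be read as finite limits — matching which gives $-(\rho_1y+\rho_2z)$, $\rho_1^2+\rho_2^2+yz\rho_1\rho_2$, $-\rho_1\rho_2(\rho_1z+\rho_2y)$, $\rho_1^2\rho_2^2$.

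A cleaner alternative for (iii), which also ties the statement to Theorem \ref{main}, is to specialize the expansion (\ref{ex_w_na_p}): after the $\sqrt{1-q}$-rescaling it expresses $A_n$ through the $P_j$ and through $G_{n-j}=g_{n-j}(\cdot|\cdot,\rho_1\rho_2q^{n-1},q)$ at rescaled arguments (the latter being $b_{n-j}(\cdot/2|0)$ when $q=0$). At $q=0$ the sum truncates to $j\in\{n,n-1,n-2\}$ since $b_m(\cdot|0)=0$ for $m\ge3$, giving $A_n(\cdot|0)=P_n(x|y,\rho_1,0)-\rho_2z\,P_{n-1}(x|y,\rho_1,0)+\rho_2^2P_{n-2}(x|y,\rho_1,0)$, into which the $q=0$ form of (ii) is substituted and the $U_k(x/2)$ collected; at $q=1$ the scaled Hermite factors obey the classical addition law $H_n^{(s+t)}(u+v)=\sum_j\binom nj H_j^{(s)}(u)H_{n-j}^{(t)}(v)$, with $H_m^{(s)}(u):=s^{m/2}H_m(u/\sqrt s)$ the monic polynomial of $N(0,s)$, which collapses the sum to a single $H_n^{(\sigma^2)}$ centred at $\big(x(1-\rho_1^2\rho_2^2)-\rho_1(1-\rho_2^2)y-\rho_2(1-\rho_1^2)z\big)/(1-\rho_1^2\rho_2^2)$. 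The real work is confined to (iii): on the recurrence route it is the $q\to1$ telescoping showing that the product of three (resp. four) $q$-Pochhammer factors against the degree-four $\omega$ collapses to the monomial $n\sigma^2$, plus the delicate small-$n$ limits of (\ref{beta_n})–(\ref{c_n}) through their apparent $q^{-1}$, $q^{-2}$ singularities; on the Theorem \ref{main} route it is the arithmetic identifying the centre and variance emerging from the binomial collapse with the displayed closed form. Parts (i)–(ii) are bookkeeping once (\ref{pH})–(\ref{pASC}) are in hand.
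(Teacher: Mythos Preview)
Your main approach is correct and essentially identical to the paper's: both arguments specialize the three-term recurrences (\ref{pH}), (\ref{pASC}), (\ref{pA}) at $q=1$ and $q=0$, compute the limiting $\beta_n,\gamma_n$, and invoke uniqueness (the paper also mentions, for $A_n$ at $q=0$, the equivalent density-based observation that $f_{C2N}(\,\cdot\,|0)$ is $\sqrt{4-x^2}$ over a quartic, forcing a five-term Chebyshev combination). Your additional route through the expansion (\ref{ex_w_na_p}) is not in the paper's proof but is a legitimate and somewhat more structural alternative, trading the small-$n$ limit bookkeeping of $\beta_n,\gamma_n$ for the truncation $b_m(\,\cdot\,|0)=0$ ($m\ge3$) and the Hermite convolution identity at $q=1$.
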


\begin{proof}
First of all let us notice that $\left[ n\right] _{1}\allowbreak
=\allowbreak n$ and $\left[ n\right] _{0}\allowbreak =\allowbreak 1.$ So (%
\ref{pH}) for $q\allowbreak =\allowbreak 1$ and $q\allowbreak =\allowbreak 0$
reduces to (\ref{zwH}). Further for $q\allowbreak =\allowbreak 1,$ $\beta
_{n}\allowbreak =\allowbreak \frac{y\rho _{1}\left( 1-\rho _{2}^{2}\right)
+z\rho _{2}\left( 1-\rho _{1}^{2}\right) }{1-\rho _{1}^{2}\rho _{2}^{2}}$
and $\gamma _{n}\allowbreak =\frac{n\left( 1-\rho _{1}^{2}\right) \left(
1-\rho _{2}^{2}\right) }{1-\rho _{1}^{2}\rho _{2}^{2}}\allowbreak $ and for $%
q\allowbreak =\allowbreak 0,$ $\beta _{n}\allowbreak =\allowbreak 0$ and $%
\gamma _{n}\allowbreak =1$. ii) Follows (\ref{pASC}) and for $q\allowbreak
=\allowbreak 1$ (\ref{zwH}) modified in a standard way to fit (\ref{zwH}).
The case $q\allowbreak =\allowbreak 0$ follows directly Lemma \ref{B and H},
i). iii) Follows the fact that $\beta _{n}$ for $q\allowbreak =\allowbreak 1$
is equal to $\frac{y\rho _{1}\left( 1-\rho _{2}^{2}\right) +z\rho _{2}\left(
1-\rho _{1}^{2}\right) }{1-\rho _{1}^{2}\rho _{2}^{2}}$ does not depend on $%
n $ while for $q\allowbreak =\allowbreak 0$ is equal to $0$ for $n\geq 2.$
For $q\allowbreak =\allowbreak 1$ $\gamma _{n}$ is equal to $\frac{n\left(
1-\rho _{1}^{2}\right) \left( 1-\rho _{2}^{2}\right) }{1-\rho _{1}^{2}\rho
_{2}^{2}}$ -proportional to $n$ while for $q\allowbreak =\allowbreak 0$ to $%
1 $ for $n\geq 2.$ For $q\allowbreak =\allowbreak 1$ it is easy to notice
that three-term recurrence relation satisfied by polynomials $A_{n}$ is a
modified one given by (\ref{zwH}). For $q\allowbreak =\allowbreak 0$ one can
reason in two ways. One way of reasoning is to observe that the density
given by (\ref{fpAW}) has a form of the ratio of $\sqrt{4-x^{2}}$ divided by
certain polynomial of order $4.$ Hence following Proposition 1 of \cite%
{Szabl-exp} we deduce that polynomials that are orthogonal with respect to
the density of this form must be of the form of linear combination of
successive $4+1\allowbreak =\allowbreak 5$ polynomials that are orthogonal
with respect to the density $\sqrt{4-x^{2}}/2\pi $ i.e. Chebyshev
polynomials of the second kind. Or we can notice that the three-term
recurrence relation satisfied by AW polynomials for $q\allowbreak
=\allowbreak 0$ is the same as that of the Chebyshev polynomials of the
second kind with different initial conditions. Now it remains to find
coefficients of the linear combination of the Chebyshev polynomials of the
second kind.
\end{proof}

Adapted to current setting Lemma \ref{zal_b_h} has the following form

\begin{lemma}
\label{B and H}For all $n\geq 0,$ $x,y,\rho \in \mathbb{R},$

i) $P_{n}\left( x|y,\rho ,q\right) =\sum_{j=0}^{n}\QATOPD[ ] {n}{j}_{q}\rho
^{n-j}H_{j}\left( x|q\right) B_{n-j}\left( y|q\right) ,$

ii) $H_{n}\left( x|q\right) \allowbreak =\allowbreak \sum_{j=0}^{n}\QATOPD[ ]
{n}{j}_{q}\rho ^{n-j}H_{n-j}\left( y|q\right) P_{j}\left( x|y,\rho ,q\right)
.$
\end{lemma}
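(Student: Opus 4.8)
The plan is to deduce Lemma \ref{B and H} directly from Lemma \ref{zal_b_h} by tracking how the rescaling substitution $x\mapsto x\sqrt{1-q}/2$ interacts with the definitions of $H_n,P_n,B_n$. Recall that $H_n(x|q)=h_n(x\sqrt{1-q}/2|q)/(1-q)^{n/2}$, $B_n(x|q)=b_n(x\sqrt{1-q}/2|q)/(1-q)^{n/2}$, and $P_n(x|y,\rho,q)=p_n(x\sqrt{1-q}/2|y\sqrt{1-q}/2,\rho,q)/(1-q)^{n/2}$. So the first step is simply to write down Lemma \ref{zal_b_h}, iii) at the shifted arguments $x\sqrt{1-q}/2$ and $y\sqrt{1-q}/2$, keeping $\rho$ as is:
\begin{equation*}
p_n\!\left(\tfrac{x\sqrt{1-q}}{2}\Big|\tfrac{y\sqrt{1-q}}{2},\rho,q\right)=\sum_{j=0}^{n}\QATOPD[ ]{n}{j}_q\rho^{n-j}h_j\!\left(\tfrac{x\sqrt{1-q}}{2}\Big|q\right)b_{n-j}\!\left(\tfrac{y\sqrt{1-q}}{2}\Big|q\right).
\end{equation*}

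The second step is a bookkeeping of powers of $(1-q)^{1/2}$. Dividing both sides by $(1-q)^{n/2}$, the left side becomes $P_n(x|y,\rho,q)$ by definition. On the right, split $(1-q)^{n/2}=(1-q)^{j/2}(1-q)^{(n-j)/2}$ and distribute: $h_j(x\sqrt{1-q}/2|q)/(1-q)^{j/2}=H_j(x|q)$ and $b_{n-j}(y\sqrt{1-q}/2|q)/(1-q)^{(n-j)/2}=B_{n-j}(y|q)$, while $\rho^{n-j}$ is untouched. This yields exactly assertion i). Assertion ii) is obtained the same way from Lemma \ref{zal_b_h}, iv): substitute the shifted arguments, divide by $(1-q)^{n/2}$, split the power as $(1-q)^{j/2}(1-q)^{(n-j)/2}$ to convert $p_j$ into $P_j$ and $h_{n-j}(y\sqrt{1-q}/2|q)$ into $H_{n-j}(y|q)$, leaving $\rho^{n-j}$ intact. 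The $q$-binomial coefficients are pure numbers and carry through unchanged.

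One subtlety worth a remark: Lemma \ref{zal_b_h} as stated requires $q\neq 0$ in the definition \eqref{b_h} of $b_n$ (and hence of $B_n$), but the three-term-recurrence and generating-function formulations hold for all admissible $q$ including $q=0$ with the explicit convention given for $b_n(x|0)$; since the identities in Lemma \ref{zal_b_h} are polynomial identities in $x,y,\rho$ (as noted in its proof, one proves them for $x,y\in[-1,1]$, $|\rho|<1$ and extends by polynomiality), the rescaled versions are likewise polynomial identities valid for all $q\in(-1,1]$ and all real $x,y,\rho$. There is essentially no obstacle here; the only thing to be careful about is the exponent accounting on $(1-q)^{1/2}$, and the fact that $\rho$ is \emph{not} rescaled (only $x$ and $y$ are), so no stray powers of $(1-q)$ attach to the $\rho^{n-j}$ factors. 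I would present the proof as a one-line consequence of Lemma \ref{zal_b_h} together with the defining relations of $H_n$, $P_n$, $B_n$.
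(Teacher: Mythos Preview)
Your proposal is correct and follows exactly the paper's approach: the paper's own proof simply states that i) follows from Lemma~\ref{zal_b_h}, iii) and ii) follows from Lemma~\ref{zal_b_h}, iv), with the rescaling left implicit. Your write-up merely makes explicit the bookkeeping of the factor $(1-q)^{n/2}$ under the substitution $x\mapsto x\sqrt{1-q}/2$, which is precisely what the paper's one-line citation intends.
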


\begin{proof}
i) follows Lemma \ref{zal_b_h}, iii) (shown in \cite{bms}). ii) follows
almost Lemma \ref{zal_b_h}, iv).
\end{proof}

Polynomials $\left\{ H_{n}\left( x|q\right) \right\} $ and $\left\{
P_{n}(x|y,\rho ,q\right\} $ are orthogonal with respect to measures having
respectively the following densities

\begin{eqnarray}
f_{N}\left( x|q\right) \allowbreak &=&\allowbreak f_{h}\left( \frac{x\sqrt{%
1-q}}{2}|q\right) \frac{\sqrt{1-q}}{2},  \label{fN} \\
~f_{CN}\left( x|y,\rho ,q\right) \allowbreak &=&\allowbreak f_{N}\left(
x|q\right) \frac{\left( \rho ^{2}\right) _{\infty }}{\prod_{j=0}^{\infty
}\omega \left( x\frac{\sqrt{1-q}}{2},y\frac{\sqrt{1-q}}{2}|\rho q^{j}\right) 
},  \label{fCN}
\end{eqnarray}%
$x,y\in S\left( q\right) .$ We will call distribution with the density $%
f_{N} $ $q-$Normal while with $f_{CN}$ conditional $q-$Normal.

By Corollary \ref{gestAW} polynomials $\left\{ A_{n}\right\} $ are
orthogonal with respect to measure with the density : 
\begin{equation}
f_{C2N}\left( x|y,\rho _{1},z,\rho _{2},q\right) \allowbreak =\allowbreak 
\frac{f_{CN}\left( y|x,\rho _{1},q\right) f_{CN}\left( x|z,\rho
_{2},q\right) }{f_{CN}\left( y|z,\rho _{1}\rho _{2},q\right) }.  \label{fpAW}
\end{equation}%
The subindex $C2N$ refers to the conditional Normal with two fixed ends (for
clearer interpretation see Theorem \ref{3zmienne}, below).

We have several easy observations that are believed to help intuition which
we will put two remarks one concerning densities the second polynomials:

\begin{proposition}
\label{densities 01} $f_{N}\left( x|1\right) \allowbreak $ and $f_{CN}\left(
x|y,\rho ,1\right) \mathbb{\ }$are Normal densities with appropriate
parameters $f_{N}\left( x|0\right) \allowbreak =\allowbreak \frac{1}{2\pi }%
\sqrt{4-x^{2}},~\left\vert x\right\vert \leq 2,$ (Wigner distribution with
parameter $2$), $f_{CN}\left( x|y,\rho ,0\right) \allowbreak =\allowbreak 
\frac{\left( 1-\rho ^{2}\right) \sqrt{4-x^{2}}}{2\pi (\rho
^{2}(x^{2}+y^{2})-\rho xy(1+\rho ^{2})+(1-\rho ^{2})^{2})}$ (Kesten--McKey
distribution). From (\ref{fpAW}) it follows that $f_{C2N}\left( x|y,\rho
_{1},z,\rho _{2},1\right) $ is the density of certain Normal distribution
with appropriate parameters while

\begin{equation*}
f_{C2N}\left( x|y,\rho _{1},z,\rho _{2},0\right) \allowbreak =\allowbreak 
\frac{(1-\rho _{1}^{2})(1-\rho _{2}^{2})\sqrt{4-x^{2}}\omega \left( \frac{1}{%
2}z,\frac{1}{2}y|\rho _{1}\rho _{2}\right) }{2\pi (1-\rho _{1}^{2}\rho
_{2}^{2})\omega \left( \frac{1}{2}x,\frac{1}{2}y|\rho _{1}\right) \omega
\left( \frac{1}{2}x,\frac{1}{2}z|\rho _{2}\right) },
\end{equation*}%
\newline
for $\left\vert x\right\vert ,\left\vert y\right\vert ,\left\vert
z\right\vert \leq 2,$ $\left\vert \rho _{1}\right\vert ,\rho _{2}|<1.$
\end{proposition}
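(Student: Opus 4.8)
The plan is to specialize the closed-form densities (\ref{fN}), (\ref{fCN}) and (\ref{fpAW}) to the two extreme values $q=0$ and $q=1$, and to handle these two cases by completely different devices. For $q=0$ every infinite product occurring in those formulas collapses to a single factor, so everything reduces to elementary algebra; for $q=1$ I would not try to take a limit in the product formulas at all, but instead read the orthogonalizing measures off the already-established Proposition \ref{wilomiany 01}, using determinacy of the Hermite (hence of any affinely transformed Hermite) moment problem.

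\emph{The case $q=0$.} I would use the conventions of the Notation subsection: for every $i\ge 1$ we have $q^i=0$, so $l(x|q^i)=(1+0)^2-0=1$ and $\omega(\cdot,\cdot|\rho q^i)=\omega(\cdot,\cdot|0)=1$ by (\ref{gr_w}), and $(a;0)_n=1-a$ for $n\ge 1$. Then (\ref{ort_h}) gives $f_h(x|0)=\frac{2}{\pi}\sqrt{1-x^2}$ on $[-1,1]$ because $(q)_\infty$ and $\prod_{i\ge 1}l(x|q^i)$ are both $1$, and (\ref{fN}) yields $f_N(x|0)=\frac{1}{2\pi}\sqrt{4-x^2}$ on $[-2,2]$. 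For $f_{CN}$ the product in (\ref{fCN}) reduces to its $j=0$ term $\omega(x/2,y/2|\rho)$ while $(\rho^2;0)_\infty=1-\rho^2$; expanding (\ref{gr_w}) gives $\omega(x/2,y/2|\rho)=(1-\rho^2)^2-\rho xy(1+\rho^2)+\rho^2(x^2+y^2)$, which is exactly the denominator in the asserted Kesten--McKay formula. For $f_{C2N}$ I would insert the three resulting Kesten--McKay densities into (\ref{fpAW}): the factor $\sqrt{4-y^2}$ coming from $f_{CN}(y|x,\rho_1,0)$ cancels against the one from the denominator $f_{CN}(y|z,\rho_1\rho_2,0)$, one power of $2\pi$ cancels, and — using $\omega(u,v|\rho)=\omega(v,u|\rho)$, clear from (\ref{gr_w}), to align the arguments — collecting the surviving $\omega$-factors and the $1-\rho_i^2$ terms produces the density displayed in the statement. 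The stated support constraints $|x|,|y|,|z|\le 2$, $|\rho_1|,|\rho_2|<1$ are inherited from the square roots and from positivity of the $\omega$-denominators.

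\emph{The case $q=1$.} Since $[n]_1=n$, the recurrence (\ref{pH}) at $q=1$ is the monic Hermite recurrence (\ref{zwH}); the Hermite moment problem is determinate, so the orthogonalizing measure of $\{H_n(\cdot|1)\}$ is the standard Gaussian, whence $f_N(x|1)=e^{-x^2/2}/\sqrt{2\pi}$. By Proposition \ref{wilomiany 01} ii), $P_n(x|y,\rho,1)=(1-\rho^2)^{n/2}H_n((x-\rho y)/\sqrt{1-\rho^2})$ is, up to the stated scalar, the $n$-th monic Hermite polynomial evaluated at an affine function of $x$, hence it is the monic orthogonal polynomial sequence of $\mathcal N(\rho y,1-\rho^2)$; therefore $f_{CN}(x|y,\rho,1)$ is that Normal density. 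Likewise Proposition \ref{wilomiany 01} iii) exhibits $A_n(\cdot|y,\rho_1,z,\rho_2,1)$ as an affine image of the monic Hermite polynomials, so $f_{C2N}(x|y,\rho_1,z,\rho_2,1)$ is the Normal density with mean $\frac{\rho_1(1-\rho_2^2)y+\rho_2(1-\rho_1^2)z}{1-\rho_1^2\rho_2^2}$ and variance $\frac{(1-\rho_1^2)(1-\rho_2^2)}{1-\rho_1^2\rho_2^2}$; alternatively one checks directly that substituting the three Gaussian $f_{CN}$'s into (\ref{fpAW}) and completing the square in the exponent reproduces the same Normal density (it integrates to $1$ automatically since it already orthogonalizes the $A_n$).

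The $q=0$ computations are routine once the infinite products are seen to truncate, and the $q=1$ identifications are immediate granted Proposition \ref{wilomiany 01} together with the standard determinacy of the Gaussian moment problem. I expect the only real obstacle to be bookkeeping: keeping track of the $\sqrt{1-q}/2$ rescalings built into (\ref{fN})--(\ref{fpAW}) and of the several $\omega$- and $(1-\rho^2)$-factors that must cancel in the $q=0$ evaluation of $f_{C2N}$.
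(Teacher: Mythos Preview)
Your proposal is correct. For $q=0$ you do exactly what the paper does: observe that all the infinite products in (\ref{ort_h}), (\ref{fCN}), (\ref{fpAW}) truncate to their $j=0$ factor and compute directly. For $q=1$, however, you take a different route from the paper. The paper simply cites the limit formula \cite{IA}, (13.1.33) for $f_N(\cdot|1)$ and $f_{CN}(\cdot|y,\rho,1)$ (together with Lemma~\ref{B and H} to adjust parameters) and then plugs the resulting Gaussians into (\ref{fpAW}) to get $f_{C2N}(\cdot|y,\rho_1,z,\rho_2,1)$. You instead work at the level of the orthogonal polynomials: Proposition~\ref{wilomiany 01} identifies $H_n(\cdot|1)$, $P_n(\cdot|y,\rho,1)$, $A_n(\cdot|y,\rho_1,z,\rho_2,1)$ as (affinely transformed) monic Hermite polynomials, and you then invoke determinacy of the Gaussian moment problem to conclude that the orthogonalizing measures must be the corresponding Normals. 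Your argument is more self-contained (no appeal to an external limit computation) and makes the ``appropriate parameters'' explicit; the paper's route is shorter but relies on the reader knowing that $f_N(\cdot|1)$ is to be understood as the $q\to1^-$ limit of (\ref{fN}), which is precisely what (13.1.33) in \cite{IA} supplies.
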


\begin{proof}
$f_{N}.$ For $q\allowbreak =\allowbreak 1$ see \cite{IA}, (13.1.33), for $%
q\allowbreak =\allowbreak 0$ one can see directly. $f_{CN}$ for $%
q\allowbreak =\allowbreak 1$ see \cite{IA}, (13.1.33) and necessary
adaptation of parameters stemming from Lemma \ref{B and H}, i) and the
properties of polynomials $B_{n}$ given e.g. in \cite{bms}, for $q=0$ we get
it directly from (\ref{fN}) and (\ref{fCN}). $f_{C2N}$ for $q\allowbreak
=\allowbreak 1$ it follows (\ref{fpAW}) and formula for $f_{CN}$, for $%
q\allowbreak =\allowbreak 0$ we apply (\ref{fN}) and (\ref{fCN}) and direct
calculation. Now we are ready to present probabilistic applications of the
densities $f_{N}$ , $f_{CN}$ and $f_{C2N}$ as well as polynomials $H_{n},$ $%
P_{n}$ , $A_{n}.$
\end{proof}

Let us consider $3$ random variable say $X,$ $Y,$ $Z$ their joint density is
equal to $f_{N}\left( y|q\right) f_{CN}\left( x|y,\rho _{1},q\right)
f_{CN}\left( z|x,\rho _{2},q\right) .$ Another words we assume that $Y,X,Z$
form a finite (of length $3)$ Markov chain.

\begin{theorem}
\label{3zmienne}For almost all $x,y,z\in S\left( q\right) $ and all $n\geq 1$
we have:

i) $X\allowbreak \sim \allowbreak Y\allowbreak \sim \allowbreak Z\allowbreak
\sim \allowbreak f_{N},$ $\left( Y,X\right) \allowbreak \sim \allowbreak
f_{N}\left( x|q\right) f_{CN}\left( y|x,\rho _{1},q\right) ,$ \newline
$(Z,X)\allowbreak \sim \allowbreak f_{N}\left( x|q\right) f_{CN}\left(
z|x,\rho _{2},q\right) ,$ $\left( Z,Y\right) \allowbreak \sim \allowbreak
f_{N}\left( z|q\right) f_{CN}\left( y|z,\rho _{1}\rho _{2},q\right) ,$

ii) $Y|\left( X=x,Z=z\right) \allowbreak \sim \allowbreak f_{CN}(y|x,\rho
_{1},q),$ $Z|\left( X=x,Y=y\right) \allowbreak \sim f_{CN}(z|x,\rho _{2},q),$
$\allowbreak X|(Y=y,Z=z)\allowbreak \sim \allowbreak f_{C2N}\left( x|y,\rho
_{1},z,\rho _{2},q\right) ,$

iii) for $n\geq 1$, $\mathbb{E}\left( A_{n}\left( X|y,\rho _{1},z,\rho
_{2},q\right) |Y=y,Z=z\right) \allowbreak =\allowbreak 0,$ \newline
$\mathbb{E(}P_{n}(X|y,\rho _{1},q)|Y=y,Z=z)\allowbreak =\frac{\rho
_{2}^{n}\left( \rho _{1}^{2}\right) _{n}}{\left( \rho _{1}^{2}\rho
_{2}^{2}\right) _{n}}P_{n}\left( z|y,\rho _{1}\rho _{2},q\right) ,$

iv) for $n\geq 1$, $\mathbb{E}\left( P_{n}\left( X|y,\rho _{1},q\right)
|Y=y\right) \allowbreak =\allowbreak 0,$ \newline
$\mathbb{E(}A_{n}\left( X|y,\rho _{1},z,\rho _{2},q\right) |Y=y)=\frac{\rho
_{2}^{n}\left( \rho _{1}^{2}\right) _{n}}{\left( \rho _{1}^{2}\rho
_{2}^{2}q^{n-1}\right) _{n}}G_{n}\left( z|y,\rho _{1}\rho
_{2}q^{n-1},q\right) .$
\end{theorem}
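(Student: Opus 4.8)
The plan is to derive Theorem \ref{3zmienne} as a direct translation of the analytic identities established earlier (Theorem \ref{main} and Corollary \ref{AWexp}) into probabilistic language, using only the rescaling conventions defining $H_n$, $P_n$, $A_n$, $G_n$ together with the density identities (\ref{fN}), (\ref{fCN}), (\ref{fpAW}). First I would verify part i). The assignments $X,Y,Z\sim f_N$ follow because $f_{CN}(x|y,\rho,q)$ integrates to $1$ in $x$ for each fixed $y$ (it is, after rescaling, $f_p$, which is a probability density by (\ref{_ort}) with $n=m=0$), so integrating the joint density $f_N(y|q)f_{CN}(x|y,\rho_1,q)f_{CN}(z|x,\rho_2,q)$ over the appropriate variables collapses the Markov chain one step at a time. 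The pairwise laws of $(Y,X)$ and $(Z,X)$ are then immediate from the chain structure; the only substantive point is the law of $(Z,Y)$, which requires the Chapman--Kolmogorov-type identity
\begin{equation*}
\int_{S(q)} f_{CN}(x|y,\rho_1,q)f_{CN}(z|x,\rho_2,q)\,dx = f_{CN}(z|y,\rho_1\rho_2,q).
\end{equation*}
This is exactly the rescaled form of the generating-function identity behind (\ref{chp2}) and Proposition \ref{reduce}; in fact it is the probabilistic content of the Poisson--Mehler formula (\ref{PM}) (or equivalently Corollary \ref{AWexp} i) specialized by setting $\rho_2 \to \rho_1\rho_2$, etc.), and I would cite that chain of results rather than re-derive it.

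Next, part ii). By Bayes' rule, the conditional density of $Y$ given $(X=x,Z=z)$ is proportional to $f_{CN}(x|y,\rho_1,q)$ viewed as a function of $y$ (the factors $f_N(y|q)$ and $f_{CN}(z|x,\rho_2,q)$ either cancel or are constant in $y$ once the symmetry $f_N(y|q)f_{CN}(x|y,\rho_1,q)=f_N(x|q)f_{CN}(y|x,\rho_1,q)$ from Lemma \ref{B and H} is invoked); normalization then forces the constant to be $1$, giving $f_{CN}(y|x,\rho_1,q)$. The statement for $Z|(X,Y)$ is even simpler since $Z$ depends on the past only through $X$. For $X|(Y=y,Z=z)$ the conditional density is proportional to $f_{CN}(x|y,\rho_1,q)f_{CN}(x|z,\rho_2,q)$ (again using the symmetry to move the conditioning variable), and dividing by the $(Z,Y)$ marginal from part i) yields precisely the right-hand side of (\ref{fpAW}), i.e. $f_{C2N}(x|y,\rho_1,z,\rho_2,q)$.

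Parts iii) and iv) are then pure translation of orthogonality and expansion identities. For iii), the conditional law of $X$ given $(Y=y,Z=z)$ is $f_{C2N}$, with respect to which the $A_n$ are orthogonal (this is Corollary \ref{gestAW} after rescaling), so $\mathbb{E}(A_n(X|\cdots)|Y=y,Z=z)=\int A_n f_{C2N}\,dx = 0$ for $n\ge 1$ by orthogonality to $A_0\equiv 1$. The second formula in iii) is the rescaled version of (\ref{calka1}) from Corollary \ref{AWexp} i): expand $P_n(x|y,\rho_1,q)$ against $f_W=f_{C2N}$ (up to rescaling) and read off the coefficient, which is exactly $\frac{\rho_2^n(\rho_1^2)_n}{(\rho_1^2\rho_2^2)_n}P_n(z|y,\rho_1\rho_2,q)$. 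Symmetrically, part iv) uses that $Y|(X=x)$ induces $X|(Y=y)\sim f_{CN}(x|y,\rho_1,q)=f_p$ (rescaled), with respect to which the $P_n$ are orthogonal, giving the vanishing statement; and the nonvanishing formula is the rescaled form of (\ref{calka2}), obtained by expanding $A_n$ against $f_p$ and using Theorem \ref{main} / Corollary \ref{AWexp} ii).

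The main obstacle I anticipate is purely bookkeeping: keeping the $\sqrt{1-q}/2$ rescalings straight so that the arguments of $\omega$, of the Pochhammer symbols, and of $P_n$, $G_n$ match between the ``lower-case'' world (where Theorem \ref{main} and Corollary \ref{AWexp} live) and the ``upper-case'' probabilistic world. In particular one must check that the normalizing constants in (\ref{calka1}) and (\ref{calka2}), which involve $(\rho_1^2)_n$, $(\rho_1^2\rho_2^2)_n$, $(\rho_1^2\rho_2^2q^{n-1})_n$, are invariant under the rescaling (they involve only the $\rho$'s and $q$, not $x$, so they are), and that $G_n$ carries no $(1-q)^{n/2}$ factor whereas $P_n$ and $A_n$ do — which is why $\mathbb{E}(A_n|Y=y)$ comes out in terms of $G_n(z|y,\rho_1\rho_2q^{n-1},q)$ with no stray power of $1-q$. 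Once the dictionary is fixed, every assertion is a one-line consequence of a result already proved in Section \ref{complex}.
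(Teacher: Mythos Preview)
Your proposal is essentially the same approach as the paper's: both derive i) and ii) from the symmetry $f_N(y|q)f_{CN}(x|y,\rho,q)=f_N(x|q)f_{CN}(y|x,\rho,q)$ together with the Chapman--Kolmogorov identity $\int f_{CN}(x|y,\rho_1,q)f_{CN}(z|x,\rho_2,q)\,dx=f_{CN}(z|y,\rho_1\rho_2,q)$, and then read off iii) and iv) from orthogonality plus (\ref{calka1}) and (\ref{calka2}) of Corollary~\ref{AWexp}. Two small corrections to your write-up: the density symmetry does not come from Lemma~\ref{B and H} (which concerns polynomial expansions) but directly from the explicit formulae (\ref{fN})--(\ref{fCN}), since $\omega(x,y|\rho)$ is symmetric in $x,y$; and the Chapman--Kolmogorov identity is not really encoded in (\ref{chp2}) or Proposition~\ref{reduce}---the paper simply cites \cite{bms} for it, whereas your route via the Poisson--Mehler expansion (\ref{PM}) together with Lemma~\ref{B and H}~ii) (to evaluate $\int H_j(x)f_{CN}(x|z,\rho_2,q)\,dx=\rho_2^{\,j}H_j(z)$) is a valid alternative derivation.
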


\begin{proof}
The two dimensional densities given in assertion i) and conditional
densities given in assertion ii) follow the following observations: \newline
First one $\int_{S\left( q\right) }f_{N}\left( x|q\right) f_{CN}\left(
y|x,\rho _{1},q\right) dx\allowbreak =\allowbreak f_{N}\left( y|q\right) $
which follows (\ref{fN} and \ref{fCN}) from which it follows that $%
f_{N}\left( y|q\right) f_{CN}\left( x|y,t,q\right) \allowbreak =\allowbreak
f_{N}\left( x|q\right) f_{CN}\left( y|x,t,q\right) .$ \newline
Second one $\int_{S\left( q\right) }f_{CN}\left( x|y,\rho _{1},q\right)
f_{CN}\left( y|z,\rho _{2},q\right) dy\allowbreak =\allowbreak f_{CN}\left(
x|z,\rho _{1}\rho _{2},q\right) $ for $\left\vert \rho _{1}\right\vert
,\left\vert \rho _{2}\right\vert <1$ which was proved in \cite{bms}. Third
one states that the conditional density $X|(Y=y,Z=z)$ is equal to the ratio
of joint density divided by the marginal density of $(Y,Z)$ given in
assertion i). iii) and iv) follow the fact that polynomials $A_{n}$ and $%
P_{n}$ are respectively orthogonal polynomials of $f_{C2N}$ and $f_{CN}$.
The remaining statements are in fact equivalent to assertions of Corollary %
\ref{AWexp}.
\end{proof}

Let us remark that assertions of Lemma \ref{3zmienne} do list only the most
important probabilistic aspects of polynomials and densities mentioned in
this paper. The others can be deduced from many other properties that were
presented in Section \ref{complex}. Let us mention for historical reasons
that first such probabilistic aspects appeared in the excellent paper of Bo%
\.{z}ejko et all \cite{Bo} where two stochastic processes that constitute
generalization of ordinary Wiener and Ornstein--Uhlenbeck processes
appeared. This was done in non-commutative probability context however is
important also in for classical probability. In \cite{Bo} there appeared
re-scaled densities $f_{h}$ and $f_{p}$ that this densities $f_{N}$ and $%
f_{CN}.$ Further analysis of the stochastic properties of these processes
was done recently in \cite{Szab-OU-W}.

Generalization of the presented above finite three-step Markov chain was
done in \cite{Szabl-mult}.

An attempt to overcome the Markov scheme was done in \cite{Szabl-KS}.
Unfortunately it was an unsuccessful attempt in the sense that it turned out
that it is impossible to construct true generalization of the
Kibble--Slepian formula that would lead to $3-$dimensional joint density
that would have marginals of the form \newline
$f_{N}\left( x|q\right) f_{CN}\left( y|x,\rho ,q\right) $ and many other
properties described in Lemma \ref{3zmienne} (except of course vii) and
viii)).

Let us recall that in 2001 W. Bryc in \cite{Bryc2001S} have constructed a
stationary Markov chain $\left\{ X_{n}\right\} _{n=-\infty }^{\infty }$
satisfying simple intuitive conditions imposed on its first two conditional
moments. It turned out that one- dimensional distribution of this chain has
density $f_{N}$ while its transitional distribution has density $f_{CN}.$
Let us mention that parameter $q$ does not appear in the original
probabilistic description of the chain. It has to be defined as a certain
rational function of parameters describing these first two conditional
moments. For details we refer interested readers to the original paper of
Bryc \cite{Bryc2001S}. Some recollection of known and presentation of new
properties of these Markov processes constructed by W. Bryc is also done in 
\cite{bms},\cite{Szab-OU-W},\cite{Szabl-mult}.

Finally let us mention also the fact that polynomials and densities analyzed
in this paper appear also in the so called 'quadratic harnesses context'.
For detail see \cite{BryMaWe},\cite{BryWe},\cite{bryc05}.

\section{Proofs\label{dowody}}

\begin{proof}[Proof of Proposition \protect\ref{wsp}]
Following \cite{KLS}, (14.14) let us define 
\begin{eqnarray*}
A_{n} &=&\frac{(1-abq^{n})(1-acq^{n})(1-adq^{n})(1-abcdq^{n-1})}{%
a(1-abcdq^{2n-1})(1-ancdq^{2n})}, \\
C_{n} &=&\frac{a(1-q^{n})(1-bcq^{n-1})(1-bdq^{n-1})(1-cdq^{n-1})}{%
(1-abcdq^{2n-1})(1-ancdq^{2n+1})}.
\end{eqnarray*}%
Then by straightforward calculations we show that 
\begin{equation*}
a+a^{-1}-A_{n}-C_{n}\allowbreak =\allowbreak e_{n}(a,b,c,d,q)
\end{equation*}%
and 
\begin{equation*}
A_{n-1}C_{n}=f_{n}(a,b,c,d,q).
\end{equation*}%
\newline
\end{proof}

\begin{proof}[Proof of Lemma \protect\ref{conAW-c2h}]
Let us consider $a\allowbreak =\allowbreak 0.$ Then we get $%
f_{n}(0,b,c,d,q)\allowbreak =\allowbreak
(1-q^{n})(1-bcq^{n-1})(1-bdq^{n-1})(1-cdq^{n-1})$ and $e_{n}(0,b,c,d,q)%
\allowbreak =\allowbreak q^{n}(b+c+d)\allowbreak +\allowbreak
bcdq^{n-1}(1-q^{n+1}-q^{n}).$ To get (\ref{a_na_c}) we apply formula (\cite%
{AW85}, (6.4)) that has two parameters $\alpha $ and $a$ values with $\
\alpha \allowbreak =\allowbreak a,$ $a\allowbreak =\allowbreak 0$. One has
to be aware that this formula is valid for polynomials $p_{n}$. We take into
account (\ref{correction}) and then we use formula $b^{n}(c/b)_{n}%
\allowbreak =\allowbreak b^{n}\prod_{k=0}^{n-1}(1-cq^{k}/b)\allowbreak
=\allowbreak \prod_{k=0}^{n}(b-cq^{k}).$ For $b=0$ we set $%
b^{n}(c/b)_{n}\allowbreak \allowbreak =\allowbreak (-c)^{n}q^{\binom{n}{2}}.$
Having this we have in our setting $a^{n-k}\left( \alpha /a\right)
_{n-k}\allowbreak =\allowbreak \left( -\alpha \right) ^{n-k}q^{\binom{n-k}{2}%
}$ when $a\allowbreak =\allowbreak 0.$ Hence we are getting:%
\begin{eqnarray*}
c_{k,n}\allowbreak &=&\allowbreak \left( -1\right) ^{k}q^{nk-\binom{k}{2}%
}\left( q^{-n}\right) _{k}q^{\binom{n-k}{2}}\times \\
&&\frac{(-a)^{n-k}\left( abcdq^{n-1}\right) _{k}\left(
bcq^{k},bdq^{k},cdq^{k}\right) _{n-k}}{\left( q\right) _{k}}\frac{1}{\left(
abcdq^{n-1}\right) _{n}}.
\end{eqnarray*}%
Now notice that $\left( q^{-n}\right) _{k}\allowbreak =\allowbreak \left(
-1\right) ^{k}q^{-nk+\binom{k}{2}}\QATOPD[ ] {n}{k}_{q}\left( q\right) _{k}.$
Hence we have:%
\begin{equation*}
c_{k,n}\allowbreak =\allowbreak \left[ \QATOP{n}{k}\right] _{q}q^{\binom{n-k%
}{2}}\left( -a\right) ^{n-k}\frac{\left( bcq^{k},bdq^{k},cdq^{k}\right)
_{n-k}}{\left( abcdq^{n+k-1}\right) _{n-k}}.
\end{equation*}%
To get (\ref{c_na_a}) we use the same formula but with $\alpha \allowbreak
=\allowbreak 0$.
\end{proof}

\begin{proof}[Proof of the Proposition \protect\ref{odwr_p}]
i) Let us notice that using (\ref{b_h}) and Lemma \ref{zal_b_h}, ii) we get 
\begin{gather*}
p_{n}\left( x|y,\rho ,q^{-1}\right) \allowbreak =\allowbreak \sum_{j=0}^{n}%
\QATOPD[ ] {n}{j}_{q^{-1}}\rho ^{n-j}h_{j}\left( x|q^{-1}\right)
b_{n-j}\left( y|q^{-1}\right) \allowbreak \\
=\allowbreak \sum_{j=0}^{n}\QATOPD[ ] {n}{j}_{q}q^{j(j-n)}\rho
^{n-j}(-1)^{j}\left( -1\right) ^{n-j}q^{-\binom{j}{2}-\binom{n-j}{2}%
}b_{j}\left( x|q\right) h_{n-j}\left( y|q\right) \allowbreak \\
=\allowbreak q^{-\binom{n}{2}}(-1)^{n}\sum_{j=0}^{n}\QATOPD[ ] {n}{j}%
_{q}\rho ^{n-j}b_{j}\left( x|q\right) h_{n-j}\left( y|q\right) \\
=q^{-\binom{n}{2}}(-1)^{n}\rho ^{n}p_{n}\left( y|x,\rho ^{-1},q\right) .
\end{gather*}
Since $\binom{j}{2}\allowbreak +\allowbreak \binom{n-j}{2}-j(j-n)\allowbreak
=\allowbreak \binom{n}{2}.$

ii) We have by (\ref{_3tr_p}) 
\begin{gather*}
g_{n+1}\left( x|y,\rho ,q\right) \allowbreak =\allowbreak \rho
^{n+1}(2(y-\rho ^{-1}q^{n}x)p_{n}\left( y|x,\rho ^{-1},q\right) - \\
(1-q^{n})(1-\rho ^{-2}q^{n-1})p_{n-1}\left( y|x,\rho ^{-1}q\right) )= \\
-2(q^{n}x-\rho y)g_{n}\left( x|y,\rho ,q\right) -(1-q^{n})(\rho
^{2}-q^{n-1})g_{n-1}(x|y,\rho ,q).
\end{gather*}

iii) Assume $\rho \neq 0.$ We have $\sum_{j=0}^{\infty }\frac{t^{n}}{\left(
q\right) _{n}}g_{n}\left( x|y,\rho ,q\right) \allowbreak =\allowbreak
\sum_{j=0}^{\infty }\frac{\rho ^{n}t^{n}}{\left( q\right) _{n}}p_{n}\left(
y|x,\rho ^{-1},q\right) \allowbreak =\allowbreak \varphi _{p}\left( y,\rho
t|x,\rho ^{-1},q\right) \allowbreak =\allowbreak \frac{\varphi _{h}\left(
y,\rho t|q\right) }{\varphi _{h}\left( x,t|q\right) }\allowbreak
=\allowbreak 1/\varphi _{p}\left( x,t|y,\rho ,q\right) $ by (\ref{chp2}).
iv) Follows directly iii).
\end{proof}

\begin{proof}[Proof of Corollary \protect\ref{conection}]
We start with (\ref{a_na_c}), then we use (\ref{c_na_a}) with $\allowbreak
d\allowbreak =\allowbreak 0$: 
\begin{gather*}
\alpha _{n}(x|a,b,c,d,q)\allowbreak =\allowbreak \sum_{j=0}^{n}\QATOPD[ ] {n%
}{j}_{q}\left( -a\right) ^{n-j}q^{\binom{n-j}{2}}\frac{\left(
bcq^{j},bdq^{j},cdq^{j}\right) _{n-j}}{\left( abcdq^{n+j-1}\right) _{n-j}}%
\psi _{j}\left( x|b,c,d,q\right) \\
=\sum_{j=0}^{n}\QATOPD[ ] {n}{j}_{q}\left( -a\right) ^{n-j}q^{\binom{n-j}{2}}%
\frac{\left( bcq^{j},bdq^{j},cdq^{j}\right) _{n-j}}{\left(
abcdq^{n+j-1}\right) _{n-j}} \\
\times \sum_{k=0}^{j}\left[ \QATOP{j}{k}\right] _{q}(-b)^{j-k}q^{\binom{j-k}{%
2}}\left( cdq^{k}\right) _{j-k}Q_{k}\left( x|c,d,q\right) \\
=\sum_{k=0}^{n}\left[ \QATOP{n}{k}\right] _{q}\left( -1\right) ^{n-k}\left(
cdq^{k}\right) _{n-k}Q_{k}\left( x|c,d,q\right) \times \\
\sum_{m=0}^{n-k}\left[ \QATOP{n-k}{m}\right] _{q}q^{\binom{n-k-m}{2}+\binom{m%
}{2}}a^{n-k-m}b^{m}\frac{\left( bcq^{k+m},bdq^{k+m}\right) _{n-k-m}}{\left(
abcdq^{n+k+m-1}\right) _{n-k-m}}.
\end{gather*}%
Now we notice that $(n-m)(n-m-1)/2+m(m-1)/2-n(n-1)/2\allowbreak =\allowbreak
m\left( m-n\right) .$

Hence we see that $q^{\binom{n-k-m}{2}+\binom{m}{2}}\allowbreak =\allowbreak
q^{\binom{n-k}{2}}q^{m(m-n+k)}$ and consequently we get (\ref{aw_na_asc}).
\end{proof}

\begin{proof}[Proof of Lemma \protect\ref{conversion}]
Proof is based on two formulae obtained independently expressing the sum $%
\sum_{k\geq 0}h_{n+k}\left( x|q\right) h_{m+k}\left( y|q\right) \frac{t^{k}}{%
\left( q\right) _{k}}.$ Now recall that Carlitz in (\cite{Carlitz72}, (1.4))
summed up the following expression $\sum_{k\geq 0}R_{m+k}\left( x|q\right)
R_{n+k}\left( y|q\right) \frac{\tau ^{k}}{\left( q\right) _{k}},$ where $%
R_{n}\left( x|q\right) \allowbreak $ denotes the so called Rogers-Szeg\"{o}
(RS) polynomial defined by :$R_{n}\left( x|q\right) \allowbreak =\allowbreak
\sum_{k=0}^{n}\QATOPD[ ] {n}{k}_{q}x^{k}$. Let us also recall formula $%
h_{n}(\cos \theta |q)\allowbreak =\allowbreak \exp (ni\theta
)R_{n}(e^{-2i\theta }|q)$ (see \cite{IA}, (13.1.7)) expressing relationship
between RS and qH polynomials.

Applying (\cite{Carlitz72}, (1.4)) with $a\allowbreak =\allowbreak \exp
\left( -2i\theta \right) ,$ $b\allowbreak =\allowbreak \exp \left( -2i\eta
\right) $ and $z\allowbreak =\exp \left( i(\theta +\eta \right)
t)\allowbreak $ we get: 
\begin{gather*}
\sum_{k\geq 0}R_{m+k}\left( e^{-2i\theta }|q\right) R_{n+k}\left( e^{-2i\eta
}|q\right) \frac{e^{ik\left( \theta +\eta \right) }t^{k}}{\left( q\right)
_{k}}\allowbreak = \\
\allowbreak \sum_{k\geq 0}R_{k}\left( e^{-2i\theta }|q\right) R_{k}\left(
e^{-2i\eta }|q\right) \frac{e^{ik\left( \theta +\eta \right) }t^{k}}{\left(
q\right) _{k}}\times \\
\sum_{s=0}^{m}\sum_{r=0}^{n}\left[ \QATOP{m}{s}\right] _{q}\left[ \QATOP{n}{r%
}\right] _{q}\frac{\left( te^{i(-\theta +\eta )}\right) _{s}\left(
te^{i(\theta -\eta )}\right) _{r}\left( te^{-i(\theta +\eta )}\right) _{s+r}%
}{\left( t^{2}\right) _{s+r}}e^{-2i\left( m-s\right) \theta }e^{-2i(n-r)\eta
}.
\end{gather*}%
Multiplying both sides by $e^{i(m\theta +n\eta )}$ and passing to
polynomials $h_{n}$ we get%
\begin{gather}
\sum_{k\geq 0}h_{m+k}\left( x|q\right) h_{n+k}\left( y|q\right) \frac{t^{k}}{%
\left( q\right) _{k}}\allowbreak =\allowbreak \sum_{k\geq 0}h_{k}\left(
x|q\right) h_{k}\left( y|q\right) \frac{t^{k}}{\left( q\right) _{k}}
\label{carh} \\
\times \sum_{s=0}^{m}\sum_{r=0}^{n}\left[ \QATOP{m}{s}\right] _{q}\left[ 
\QATOP{n}{r}\right] _{q}\frac{\left( te^{i(-\theta +\eta )}\right)
_{s}\left( te^{i(\theta -\eta )}\right) _{r}\left( te^{-i(\theta +\eta
)}\right) _{s+r}}{\left( t^{2}\right) _{s+r}}e^{i\theta (2s-m)}e^{i\eta
\left( 2r-n\right) }.  \notag
\end{gather}%
Now recall that in \cite{Szabl-JFA}, Lemma 3 i) we have proved certain
formula for the polynomials $H_{n}$ related to polynomials $h_{n}$ as in the
beginning of Section \ref{prob}. After necessary adjustment to polynomials $%
h_{n}$ (\ref{carh}) we get our assertion.
\end{proof}

\begin{proof}[Proof of Theorem \protect\ref{main}]
First we will prove (\ref{ex_p_na_w}). We start with (\ref{asw_na_aw}),
apply (\ref{newparam}) getting%
\begin{gather*}
p_{n}\left( x|y,\rho _{1},q\right) \allowbreak = \\
\allowbreak \sum_{j=0}^{n}\left[ \QATOP{n}{j}\right] _{q}\left( \rho
_{1}^{2}q^{j}\right) _{n-j}\rho _{2}^{n-j}w_{j}\left( x|y,\rho _{1},z,\rho
_{2},q\right) \times \\
\sum_{m=0}^{n-j}\left[ \QATOP{n-j}{m}\right] _{q}e^{-i\left( n-j-m\right)
\eta }e^{im\eta }\frac{\left( \rho _{1}\rho _{2}e^{i\left( \eta -\theta
\right) }q^{j},\rho _{1}\rho _{2}e^{-i\left( \eta +\theta \right)
}q^{j}\right) _{m}}{\left( \rho _{1}^{2}\rho _{2}^{2}q^{2j}\right) _{m}}.
\end{gather*}%
Now we apply (\ref{wzor}) with $m\allowbreak \longrightarrow \allowbreak 0$, 
$n\longrightarrow n-j,$ $t\longrightarrow \rho _{1}\rho _{2}q^{j}$ and get 
\begin{gather*}
\sum_{k=0}^{n-j}\left[ \QATOP{n-j}{k}\right] _{q}e^{-i\left( n-j-k\right)
\eta }e^{ik\eta }\frac{\left( \rho _{1}\rho _{2}e^{i\left( \theta -\eta
\right) }q^{j},\rho _{1}\rho _{2}e^{-i\left( \theta +\eta \right)
}q^{j}\right) _{k}}{\left( \rho _{1}^{2}\rho _{2}^{2}q^{2j}\right) _{k}}%
\allowbreak = \\
\allowbreak p_{n-j}\left( z|y,\rho _{1}\rho _{2}q^{j},q\right) /\left( \rho
_{1}^{2}\rho _{2}^{2}q^{2j}\right) _{n-j}.
\end{gather*}

Now let us concentrate on proving (\ref{ex_w_na_p}). Using (\ref{aw_na_asc})
and (\ref{newparam}) we get:%
\begin{gather*}
w_{n}\left( x|y,\rho _{1},z,\rho _{2},q\right) \allowbreak =\allowbreak
\sum_{k=0}^{n}\left[ \QATOP{n}{k}\right] _{q}\left( -1\right) ^{n-k}q^{%
\binom{n-k}{2}}\left( \rho _{1}^{2}q^{k}\right) _{n-k}p_{k}\left( x|y,\rho
_{1},q\right) \\
\times \sum_{m=0}^{n-k}\left[ \QATOP{n-k}{m}\right] _{q}q^{m(m-n+k)}\rho
_{2}^{m}e^{im\eta }\rho _{2}^{n-k-m}e^{-i(n-k-m)\eta } \\
\times \frac{\left( q^{n-m}\rho _{1}\rho _{2}e^{i(-\eta +\theta
)},q^{n-m}\rho _{1}\rho _{2}e^{i\left( -\eta -\theta \right) }\right) _{m}}{%
\left( \rho _{1}^{2}\rho _{2}^{2}q^{2n-m-1}\right) _{m}}.
\end{gather*}%
Now we apply formula (\ref{wzor}) with $m\allowbreak \longrightarrow
\allowbreak 0,$ $k\longrightarrow m$, $n\longrightarrow n-k,$ $%
q\longrightarrow q^{-1}$, $t\allowbreak =\allowbreak \rho _{1}\rho
_{2}q^{n-1}.$ We have applying (\ref{qbinq-1}) on the way 
\begin{gather*}
w_{n}\left( x|y,\rho _{1},z,\rho _{2},q\right) =\sum_{k=0}^{n}\left[ \QATOP{n%
}{k}\right] _{q}\left( -1\right) ^{n-k}q^{\binom{n-k}{2}}\left( \rho
_{1}^{2}q^{k}\right) _{n-k}\rho _{2}^{n-k}p_{k}\left( x|y,\rho _{1},q\right)
\\
\times \sum_{m=0}^{n-k}\left[ \QATOP{n-k}{m}\right] _{q^{-1}}e^{-i\left(
n-k-2m\right) \eta }\frac{\left( q^{-m+1}te^{i(-\eta +\theta
)},q^{-m+1}te^{i\left( -\eta -\theta \right) }\right) _{m}}{\left(
t^{2}q^{-m+1}\right) _{m}}\allowbreak \\
=\sum_{k=0}^{n}\left[ \QATOP{n}{k}\right] _{q}\left( -1\right) ^{n-k}q^{%
\binom{n-k}{2}}\left( \rho _{1}^{2}q^{k}\right) _{n-k}\rho
_{2}^{n-k}p_{k}\left( x|y,\rho _{1},q\right) \frac{p_{n-k}\left(
z|y,t,q^{-1}\right) }{\left( t^{2};q^{-1}\right) _{n-k}}.
\end{gather*}%
Now it remains to apply Proposition \ref{odwr_p}, i).and once more (\ref%
{qbinq-1}).
\end{proof}

\begin{proof}[Proof of the Corollary \protect\ref{identies}]
i) We combine (\ref{ex_w_na_p}) and (\ref{ex_p_na_w}) getting:%
\begin{gather*}
w_{n}\left( x|y,\rho _{1},z,\rho _{2},q\right) =\sum_{j=0}^{n}\left[ \QATOP{n%
}{j}\right] _{q}\frac{\rho _{2}^{n-j}\left( \rho _{1}^{2}q^{j}\right) _{n-j}%
}{\left( \rho _{1}^{2}\rho _{2}^{2}q^{n+j-1}\right) _{n-j}}g_{n-j}\left(
z|y,\rho _{1}\rho _{2}q^{n-1},q\right) \\
\times \sum_{k=0}^{j}\left[ \QATOP{j}{k}\right] _{q}w_{k}\left( x|y,\rho
_{1},z,\rho _{2},q\right) \frac{\rho _{2}^{j-k}\left( \rho
_{1}^{2}q^{k}\right) _{j-k}}{\left( \rho _{1}^{2}\rho _{2}^{2}q^{2k}\right)
_{j-k}}p_{j-k}\left( z|y,\rho _{1}\rho _{2}q^{k},q\right) .
\end{gather*}%
Changing the order of summation leads: 
\begin{gather*}
w_{n}\left( x|y,\rho _{1},z,\rho _{2},q\right) =\sum_{k=0}^{n}\QATOPD[ ] {n}{%
k}_{q}w_{k}\left( x|y,\rho _{1},z,\rho _{2},q\right) \rho _{2}^{n-k}\left(
\rho _{1}^{2}q^{k}\right) _{n-k}\times \\
\sum_{j=k}^{n}\QATOPD[ ] {n-k}{j-k}_{q}\frac{g_{n-j}\left( z|y,\rho _{1}\rho
_{2}q^{n-1},q\right) }{\left( \rho _{1}^{2}\rho _{2}^{2}q^{n+j-1}\right)
_{n-j}}\frac{p_{j-k}\left( z|y,\rho _{1}\rho _{2}q^{k},q\right) }{\left(
\rho _{1}^{2}\rho _{2}^{2}q^{2k}\right) _{j-k}} \\
=\sum_{k=0}^{n}\QATOPD[ ] {n}{k}_{q}w_{k}\left( x|y,\rho _{1},z,\rho
_{2},q\right) \rho _{2}^{n-k}\left( \rho _{1}^{2}q^{k}\right) _{n-k}\times \\
\sum_{m=0}^{n-k}\QATOPD[ ] {n-k}{m}_{q}\frac{g_{n-k-m}\left( z|y,\rho
_{1}\rho _{2}q^{n-1},q\right) }{\left( \rho _{1}^{2}\rho
_{2}^{2}q^{n+k+m-1}\right) _{n-k-m}}\frac{p_{m}\left( z|y,\rho _{1}\rho
_{2}q^{k},q\right) }{\left( \rho _{1}^{2}\rho _{2}^{2}q^{2k}\right) _{m}}.
\end{gather*}%
Since expansions in AW polynomials is unique we deduce that \newline
$\sum_{m=0}^{n-k}\QATOPD[ ] {n-k}{m}_{q}\frac{g_{n-k-m}\left( z|y,\rho
_{1}\rho _{2}q^{n-1},q\right) }{\left( \rho _{1}^{2}\rho
_{2}^{2}q^{n+k+m-1}\right) _{n-k-m}}\frac{p_{m}\left( z|y,\rho _{1}\rho
_{2}q^{k},q\right) }{\left( \rho _{1}^{2}\rho _{2}^{2}q^{2k}\right) _{m}}%
\allowbreak =\allowbreak 0$ for $k\allowbreak =\allowbreak 0,\ldots ,n-1.$
Now we replace $\rho _{1}\rho _{2}$ by $t.$ Since we deal with rational
functions we can extend range of all variables to all reals.

ii) We combine (\ref{ex_p_na_w}) and (\ref{ex_w_na_p}) getting 
\begin{gather*}
p_{n}\left( x|y,\rho _{1},q\right) =\sum_{j=0}^{n}\left[ \QATOP{n}{j}\right]
_{q}\frac{\rho _{2}^{n-j}\left( \rho _{1}^{2}q^{j}\right) _{n-j}}{\left(
\rho _{1}^{2}\rho _{2}^{2}q^{2j}\right) _{n-j}}p_{n-j}\left( z|y,\rho
_{1}\rho _{2}q^{j},q\right) \\
\times \sum_{k=0}^{j}\QATOPD[ ] {j}{k}_{q}p_{k}\left( x|y,\rho _{1},q\right) 
\frac{\rho _{2}^{j-k}\left( \rho _{1}^{2}q^{k}\right) _{j-k}}{\left( \rho
_{1}^{2}\rho _{2}^{2}q^{j+k-1}\right) _{j-k}}g_{j-k}\left( z|y,\rho _{1}\rho
_{2}q^{j-1},q\right) .
\end{gather*}

We change the order of summation getting:%
\begin{gather*}
p_{n}\left( x|y,\rho _{1},q\right) =\sum_{k=0}^{n}\QATOPD[ ] {n}{k}%
_{q}p_{k}\left( x|y,\rho _{1},q\right) \rho _{2}^{n-k}\left( \rho
_{1}^{2}q^{k}\right) _{n-k}\times \\
\sum_{j=k}^{n}\QATOPD[ ] {n-k}{j-k}_{q}\frac{p_{n-j}\left( z|y,\rho _{1}\rho
_{2}q^{j},q\right) }{\left( \rho _{1}^{2}\rho _{2}^{2}q^{2j}\right) _{n-j}}%
\frac{g_{j-k}\left( z|y,\rho _{1}\rho _{2}q^{j-1},q\right) }{\left( \rho
_{1}^{2}\rho _{2}^{2}q^{j+k-1}\right) _{j-k}} \\
=\sum_{k=0}^{n}\QATOPD[ ] {n}{k}_{q}p_{k}\left( x|y,\rho _{1},q\right) \rho
_{2}^{n-k}\left( \rho _{1}^{2}q^{k}\right) _{n-k}\times \\
\sum_{m=0}^{n-k}\QATOPD[ ] {n-k}{m}_{q}\frac{p_{n-k-m}\left( z|y,\rho
_{1}\rho _{2}q^{m+k},q\right) }{\left( \rho _{1}^{2}\rho
_{2}^{2}q^{2k+2m}\right) _{n-k-m}}\frac{g_{m}\left( z|y,\rho _{1}\rho
_{2}q^{m+k-1},q\right) }{\left( \rho _{1}^{2}\rho _{2}^{2}q^{2k+m-1}\right)
_{m}}.
\end{gather*}

Since expansion in ASC polynomials is unique we deduce that\newline
$\sum_{m=0}^{n-k}\QATOPD[ ] {n-k}{m}_{q}\frac{p_{n-k-m}\left( z|y,\rho
_{1}\rho _{2}q^{m+k},q\right) }{\left( \rho _{1}^{2}\rho
_{2}^{2}q^{2k+2m}\right) _{n-k-m}}\frac{g_{m}\left( z|y,\rho _{1}\rho
_{2}q^{m+k-1},q\right) }{\left( \rho _{1}^{2}\rho _{2}^{2}q^{2k+m-1}\right)
_{m}}\allowbreak =\allowbreak 0$ for $k=0,\ldots ,n-1.$ Now it remains to
replace $\rho _{1}\rho _{2}$ by $t.$
\end{proof}

\end{document}